%% file: main.tex
\documentclass[12pt]{amsart}

\numberwithin{equation}{section}
\input{def}
\input{local_commands}

\usepackage{times}

\title[]{Amenable subrelations of treeable equivalence relations and applications to anti-treeability}

\author[]{Anush Tserunyan}
\address[Anush Tserunyan]{Department of Mathematics and Statistics, McGill University, Montreal, QC, Canada}
\email{anush.tserunyan@mcgill.ca}

\author[]{Robin Tucker-Drob}
\address[Robin Tucker-Drob]{Department of Mathematics, University of Florida, Gainesville, FL, USA}
\email{r.tuckerdrob@ufl.edu}

\keywords{Borel graphs, acyclic, treeable, treeing amenable, countable Borel equivalence relations, measure-class-preserving, mcp, Radon–Nikodym cocycle, Radon–Nikodym topography, nonsingular group actions, locally compact group, amenable subgroup, compact open subgroup}

\subjclass[2020]{Primary 37A20, 03E15, 22F10; Secondary 37A40, 37C85, 05C05, 05C22}

\date{\today}

\begin{document}

\begin{abstract}
We study the structure of amenable subrelations of treeable measure-class-preserving equivalence relations, applying the Radon--Nikodym topography perspective initiated in our companion paper \cite{T-TD:mcp_Adams}.
For a nowhere smooth amenable subrelation, we show that the ends selected by each class in Adams's end selection theorem are precisely those which are nonvanishing along that class.
From this, we derive a number of structure results about nowhere smooth amenable subrelations of treeable equivalence relations.
For example, we show that every nowhere smooth amenable subrelation is contained in an essentially unique maximal amenable subrelation.

This yields an anti-treeability criterion, which we use to show that every nonamenable locally compact second countable group with noncompact amenable radical is not treeable, and that every nonamenable nonunimodular treeable group has a compact open subgroup.
\end{abstract}

\maketitle

\tableofcontents

\subfile{intro}

\subfile{prelims}

\subfile{retraction_to_hyperfinite_along_tree}

\subfile{amenable_subrelations}

\subfile{nontreeability}



\def\MR#1{}
\bibliographystyle{amsalpha}
\bibliography{refs}

\end{document}

%% file: def.tex
\usepackage{kpfonts}

\usepackage{amsthm}

\usepackage{amsmath}

\usepackage{amssymb}

\usepackage{dsfont} 

\usepackage{thmtools}

\usepackage{tikz}
\usetikzlibrary{
decorations.markings,
decorations.pathmorphing,
arrows.meta,
positioning,
calc
}

\usepackage{cancel}

\usepackage{verbatim}

\usepackage[shortlabels]{enumitem}
\setlist[enumerate]{leftmargin=*,font=\upshape,align=parleft,label=(\alph*),labelsep=0pt}
\setlist[itemize]{leftmargin=*,labelwidth=*}

\usepackage{graphicx}
\usepackage{float}
\usepackage{subcaption}

\usepackage{color}

\usepackage{xspace}

\usepackage{calrsfs}

\DeclareSymbolFont{defaultmathcal}{OMS}{zplm}{m}{n}
\DeclareSymbolFontAlphabet{\mathcal}{defaultmathcal}

\DeclareSymbolFont{handwritten}{OMS}{rsfs}{m}{n}
\DeclareSymbolFontAlphabet{\handcal}{handwritten}

\definecolor{darkgreen}{RGB}{54,124,50}
\usepackage{hyperref}
\hypersetup{
	pdfstartview={XYZ null null 1.00}, 
	pdfpagemode=UseNone, 
	colorlinks,
	breaklinks,
	linkcolor=blue,
	urlcolor=blue, 
	anchorcolor=blue,
	citecolor=darkgreen,
}

\usepackage[capitalise]{cleveref}

\usepackage{xparse} 

\usepackage{geometry}
\usepackage{microtype}

\input{"commands.tex"}

\input{"environments.tex"}

\usepackage{subfiles}

\usepackage{mdframed}
\newmdenv[
leftmargin = 1cm,
rightmargin = 0pt,
skipabove = 8pt,
skipbelow = 3pt,
innerleftmargin = 8pt,
innertopmargin = 0pt,
innerbottommargin = 0pt,
innerrightmargin = 0pt,
linewidth = 3pt,
topline = false,
rightline = false,
bottomline = false
]{leftbar}

\newcommand{\red}[1]{{\color{red}#1}}

\definecolor{gris}{RGB}{90,90,90}

\definecolor{vert}{RGB}{7,126,26}

\definecolor{purple}{RGB}{116,0,159}

\makeatletter
\def\@settitle{\begin{center}%
		\baselineskip14\p@\relax
		\bfseries
		\uppercasenonmath\@title
		\@title
		\ifx\@subtitle\@empty\else
		\\[1ex]\uppercasenonmath\@subtitle
		\footnotesize\mdseries\@subtitle
		\fi
	\end{center}%
}
\def\subtitle#1{\gdef\@subtitle{#1}}
\def\@subtitle{}
\makeatother

%% file: commands.tex
\newcommand{\N}{\mathbb{N}}
\newcommand{\Z}{\mathbb{Z}}

\newcommand{\R}{\mathbb{R}}

\newcommand{\F}{\mathbb{F}}

\renewcommand{\phi}{\varphi}

\DeclareRobustCommand{\rchi}{{\mathpalette\irchi\relax}}
\newcommand{\irchi}[2]{\raisebox{\depth}{$#1\cchi$}}
\let\cchi\chi
\let\chi\rchi

\newcommand{\EC}{\mathcal{E}}

\newcommand{\dom}{\operatorname{dom}}

\newcommand{\im}{\operatorname{im}}

\newcommand{\Pow}{\handcal{P}}

\newcommand{\del}{\partial}

\makeatletter
\def\moverlay{\mathpalette\mov@rlay}

\def\mov@rlay#1#2{\leavevmode\vtop{%
		\baselineskip\z@skip \lineskiplimit-\maxdimen
		\ialign{\hfil$\m@th#1##$\hfil\cr#2\crcr}}}

\newcommand{\charfusion}[3][\mathord]{
	#1{\ifx#1\mathop\vphantom{#2}\fi
		\mathpalette\mov@rlay{#2\cr#3}
	}
	\ifx#1\mathop\expandafter\displaylimits\fi}
\makeatother

\renewcommand{\leq}{\leqslant}
\renewcommand{\le}{\leqslant}

\renewcommand{\geq}{\geqslant}

\newcommand{\actson}{\curvearrowright}

\newcommand{\onto}{\twoheadrightarrow}

\newcommand{\bij}{\xrightarrow{\,\smash{\raisebox{-0.65ex}{\ensuremath{\scriptstyle\sim}}}\,}}

\newcommand*{\defeq}{\mathrel{\vcenter{\baselineskip0.5ex \lineskiplimit0pt \hbox{\scriptsize.}\hbox{\scriptsize.}}}=}

\DeclareMathSymbol{\lqm}{\mathord}{operators}{``}
\DeclareMathSymbol{\rqm}{\mathord}{operators}{`'}

\newcommand{\set}[1]{\left\{ #1 \right\}}

\newcommand{\gen}[1]{\left\langle #1 \right\rangle}

\newcommand{\clint}[1]{\left[ #1 \right]}

\newcommand{\opint}[1]{\left( #1 \right)}

\newcommand{\rest}[1]{\mathord{|_{#1}}}



%% file: environments.tex
\theoremstyle{plain}

\newtheorem{theorem}[equation]{Theorem}
\newtheorem*{theorem*}{Theorem}

\makeatletter
\def\@empty{}
\def\ifemptycredit#1{%
	\def\tmp{#1}%
	\ifx\tmp\@empty%
	\else%
	{~(#1)}%
	\fi%
}
\makeatother

\newenvironment{namedthm*}[2][]{
	\par\medskip\noindent \textbf{#2}\ifemptycredit{#1}\textbf{.}\itshape\xspace
}{}

\crefname{prop}{Proposition}{Propositions}
\newtheorem{prop}[equation]{Proposition}
\newtheorem*{propo*}{Proposition}

\crefname{property}{Property}{Properties}

\newtheorem*{property*}{Property}

\newtheorem{lemma}[equation]{Lemma}
\newtheorem*{lemma*}{Lemma}

\crefname{claimlemma}{Claim}{Claims}

\crefname{cor}{Corollary}{Corollaries}
\newtheorem{cor}[equation]{Corollary}
\newtheorem*{cor*}{Corollary}

\crefname{obs}{Observation}{Observations}
\newtheorem{obs}[equation]{Observation}
\newtheorem*{obs*}{Observation}

\crefname{obss}{Observations}{Observations}

\newtheorem{obss*}{Observations}

\crefname{fact}{Fact}{Facts}

\newtheorem*{fact*}{Fact}

\theoremstyle{definition}

\crefname{defn}{Definition}{Definitions}
\newtheorem{defn}[equation]{Definition}
\newtheorem*{defn*}{Definition}

\newenvironment{defn**}[1][]{\par\medskip\noindent \textbf{Definition\xspace#1.}\xspace}{}

\crefname{question}{Question}{Questions}
\newtheorem{question}[equation]{Question}
\newtheorem*{question*}{Question}

\crefname{conj}{Conjecture}{Conjectures}

\newtheorem*{conj*}{Conjecture}

\crefname{example}{Example}{Examples}
\newtheorem{example}[equation]{Example}
\newtheorem*{example*}{Example}

\crefname{examples.plain}{Examples}{Examples}
\newtheorem{examples.plain}[equation]{Examples}
\newtheorem*{examples.plain*}{Examples}

\crefname{workinghyp}{Working Hypothesis}{Working Hypotheses}

\theoremstyle{remark}

\crefname{remark}{Remark}{Remarks}
\newtheorem{remark}[equation]{Remark}
\newtheorem*{remark*}{Remark}

\newenvironment{remarklike*}[2][]{\par\medskip\noindent \textit{#2}#1\textbf{.}\rmfamily\xspace}{\smallskip}

\crefname{claim+}{Claim}{Claims}
\newtheorem{claim+}[equation]{Claim}

\crefname{claim}{Claim}{Claims}
\newtheorem{claim}{Claim}
\newtheorem*{claim*}{Claim}

\crefname{subclaim}{Subclaim}{Subclaims}

\newtheorem*{subclaim*}{Subclaim}

\newenvironment{case*}[1]{\smallskip\par\noindent \textit{Case}:~#1.\rmfamily}{}

\crefname{notation}{Notation}{Notations}

\newtheorem*{notation*}{Notation}

\newtheorem*{terminology*}{Terminology}

\crefname{convention}{Convention}{Conventions}
\newtheorem{convention}[equation]{Convention}
\newtheorem*{convention*}{Convention}
\newtheorem*{conventions*}{Conventions}

\crefname{spec}{Speculation}{Speculations}

\newtheorem*{spec*}{Speculation}

\crefname{caution}{Caution}{Cautions}

\newtheorem*{caution*}{Caution}

\crefname{hypothesis}{Hypothesis}{Hypotheses}

\newtheorem*{hypothesis*}{Hypothesis}

\crefname{assumption}{Assumption}{Assumptions}

\newtheorem*{assumption*}{Assumption}

\newcommand{\fntsz}[1][11]{\fontsize{#1}{#1}\selectfont}

\crefname{examples}{Examples}{Examples}

\newenvironment{examples*}[1][\alph*]
{
	\refstepcounter{equation}
	\medskip
	\noindent\textbf{Examples.}
	\medskip
	\begin{enumerate}[\bfseries(\theequation.#1),ref=(\theequation.#1),itemsep=5pt]
}
{
	\end{enumerate}
	\smallskip
}

\theoremstyle{remark}

\declaretheoremstyle[
spaceabove=\topsep, 
spacebelow=6pt,
headfont=\normalfont\itshape,
notefont=\normalfont, notebraces={(}{)},
bodyfont=\normalfont,
postheadspace=4pt,
qed=\mbox{\smaller[4]$\boxtimes$}
]{claimproofstyle}
\declaretheorem[name={Proof of Claim}, style=claimproofstyle, unnumbered]{pf}

\crefname{subsection}{Subsection}{Subsections}

\crefformat{footnote}{#2\footnotemark[#1]#3}

\crefrangeformat{enumi}{#3#1#4--#5#2#6}

\theoremstyle{plain}

%% file: intro.tex
\section{Introduction}

This article is a companion to \cite{T-TD:mcp_Adams}, where we study the \emph{Radon--Nikodym topography} of treed measure-class-preserving (mcp) equivalence relations, i.e.,
the interaction between the componentwise geometry and the system of relative weights supplied by the Radon--Nikodym cocycle.

The article \cite{T-TD:mcp_Adams} introduces the notion of a \emph{vanishing end}, i.e., an end toward which the Radon--Nikodym cocycle values tend to zero, and uses it to extend the classical Adams dichotomy \cite{Adams90} to the mcp setting:
a treed mcp relation is amenable if and only if almost every component has at most two nonvanishing ends, while the relation is nowhere amenable if and only if in almost every component, the nonvanishing ends form a nonempty closed and perfect set.

This dichotomy characterizes amenability, a global measurable property, in terms of the componentwise Radon--Nikodym topography.
Here we apply this topographical perspective to study the structure of amenable subrelations of treeable mcp equivalence relations.

For a nowhere smooth amenable subrelation $F$ of an mcp relation $E$ treed by $T$, Adams's end selection theorem \cite{Adams:indec_treed_eq_rel_end-selection} provides a canonical maximal measurable selection assigning, to each $F$-class $C$, one or two ends of the $T$-component containing $C$. 
This decomposes $F$ into its \emph{parabolic} and \emph{loxodromic} parts (with respect to $T$), according to where the selection consists of one vs.\ two ends of $T$.
In \cref{parabolic_description,loxodromic_description} we give a structural analysis of the parabolic and loxodromic parts of $F$.
The topographical content of this analysis is distilled in \cref{topography_selects_ends} to the slogan: \emph{the Radon--Nikodym topography selects the ends}. 
That is, the ends of $T$ selected by $C$ are precisely the ends of $T$ which are \emph{nonvanishing along $C$}, i.e., the ends toward which the limsup, along $C$, of the cocycle values is nonzero.
This immediately implies the nonobvious fact, new in the general mcp setting, that a parabolic relation cannot contain a loxodromic subrelation \cref{parabolic_in_parabolic}.

From this topographical analysis we obtain structural results extending known results from the probability-measure-preserving (pmp) to the mcp setting (as well as some results which are new even in the pmp case, e.g., Corollaries \labelcref{convex_hulls}, \labelcref{relation_transversality}, and \labelcref{Transversality_and_conjugacy}).
For example, in \cref{unique_max_amenable} we show that every nowhere smooth amenable subrelation of a treeable mcp equivalence relation is contained in an essentially unique maximal amenable subrelation.

It follows that in an arbitrary mcp countable Borel equivalence relation $E$ if two amenable subrelations $Q$ and $R$ have nowhere smooth intersection and generate a nowhere amenable subrelation, then $E$ is nowhere treeable (\cref{nowhere_smooth_intersection}).
We call the pair $(Q,R)$ an \textit{anti-treeable configuration} for $E$.

We apply this to provide criteria for nontreeability of locally compact groups in \cref{Hjorth_nontreeable,cor:noncomp_amen_norm_nontreeable}. 
\cref{Hjorth_nontreeable} is stated in \cite[Corollary 0.6]{Hjorth:non-treeability}; in \cref{rem:Hjorth_correction} we discuss a fixable gap in that published argument and its relation to our proof.

Combining these criteria with the structure theory of locally compact groups, we show in \cref{cor:compact_open} that every nonamenable nonunimodular treeable locally compact second countable group has a compact open subgroup.

\subsubsection*{Acknowledgments}

We thank Ruiyuan (Ronnie) Chen, Damien Gaboriau, Antoine Poulin, Grigory (Greg) Terlov, and Konrad Wróbel for many discussions related to this article.

ATs was supported by NSERC Discovery Grant RGPIN-2020-07120.
RTD was supported by NSF grant DMS-2246684.

\subsubsection*{AI statement.} The research for this article was completed without any use of AI. 
An initial draft of a much larger manuscript containing most of the results of this article was completed in August 2022; that manuscript was broken into three articles, this being the second (the other two being \cite{T-TD:mcp_Adams,TTD:paddleball}). 
In December 2024 we added the material in \cref{section:non-treeability}.

We used ChatGPT 6.0 for stylistic feedback, and to check for typos and terminological inconsistencies in the complete manuscript.
We used Gemini Pro 3.1 for help with figures.

%% file: prelims.tex
\section{Preliminaries}

We use $\N\defeq\set{0,1,\dots}$ and $\Rpos\defeq\set{r\in\R:r>0}$.
All Borel spaces are standard, all measures are Borel measures, and products carry the product Borel structure.
We refer to \cite{bible} for background on standard Borel spaces and to \cite{JKL} for background on countable Borel equivalence relations.

A \dfn{standard $\sigma$-finite measure space} is a pair $(\Omega,\nu)$, where $\Omega$ is a standard Borel space and $\nu$ is a $\sigma$-finite Borel measure on $\Omega$.
A \dfn{standard probability space} is a pair $(X,\mu)$, where $X$ is a standard Borel space and $\mu$ is a Borel probability measure on $X$.

\begin{convention}\label{convention:standard_probability}
    Unless indicated otherwise, $X$ denotes a standard Borel space, and $(X,\mu)$ denotes a standard probability space.
\end{convention}

If $Y\subseteq X$ is a non-null Borel set, then $\mu_Y$ denotes the normalized restriction of $\mu$ to $Y$.
Given a locally compact second countable group $G$ acting in a Borel way on a standard Borel space $X$, we write $E_G^X$ for the orbit equivalence relation.

\subsection{Measure-class-preserving equivalence relations}\label{subsec:mcp_cbers}

A \dfn{countable Borel equivalence relation} on $X$ is a Borel equivalence relation $E\subseteq X^2$ whose classes are countable.
Unless indicated otherwise, by a \dfn{subrelation} of $E$ we mean an equivalence subrelation of $E$.
We write $[x]_E$ for the $E$-class of $x$.
A set $A\subseteq X$ is an \dfn{$E$-complete section} if it meets every $E$-class, equivalently if $[A]_E\defeq\bigcup_{a\in A}[a]_E=X$.

Let $E$ be a countable Borel equivalence relation on a standard $\sigma$-finite measure space $(\Omega,\nu)$.
Given a property $P$ of Borel equivalence relations, we say that $E$ is \dfn{$\nu$-nowhere $P$} if no positive-measure $E$-invariant Borel set $A\subseteq\Omega$ has $E\rest A$ with property $P$.

The measure $\nu$ is \dfn{$E$-invariant} if the left and right fiber measures on $E$ agree, where
\begin{equation}\label{eq:fiber_measures}
\int_E h\,d\nu_E^\ell \defeq \int_\Omega \sum_{y\in[x]_E}h(x,y)\,d\nu(x),
\qquad
\int_E h\,d\nu_E^r \defeq \int_\Omega \sum_{x\in[y]_E}h(x,y)\,d\nu(y)
\end{equation}
for Borel $h:E\to[0,\infty]$.
If $\nu$ is an invariant probability measure, then $E$ is \dfn{probability-measure-preserving} (\dfn{pmp}) on $(\Omega,\nu)$.

We say that $E$ is \dfn{measure-class-preserving} (\dfn{mcp}) on $(\Omega,\nu)$ if $\nu_E^\ell$ and $\nu_E^r$ are equivalent measures; equivalently, the $E$-saturation of every $\nu$-null set is $\nu$-null \cite[Section 8]{Kechris-Miller}.
In this case the Radon--Nikodym derivative $\rho_\nu \defeq d \nu_E^\ell /d \nu_E^r$, called the \dfn{Radon--Nikodym cocycle of $E$ with respect to $\nu$}, is a Borel cocycle, i.e., it satisfies the cocycle identity $\rho_\nu(z,y)\rho_\nu(y,x)=\rho_\nu(z,x)$ whenever $x,y,z$ are pairwise $E$-related.
Strictly speaking, $d \nu_E^\ell/d \nu_E^r$ only determines an equivalence class of functions, but we can--and shall--always choose a representative $\rho_\nu$ which is indeed a Borel cocycle.

For $x\in \Omega$, $y\in[x]_E$, and $D\subseteq[x]_E$, put
\[
    \rho_\nu^x(y)\defeq \rho_\nu(y,x),
    \qquad
    \rho_\nu^x(D)\defeq \sum_{z\in D}\rho_\nu^x(z).
\]
Thus, $\rho_\nu^x$ is an assignment of relative weights to $[x]_E$ viewed from the basepoint $x$; changing the basepoint rescales all weights by a positive constant.
We write $\rho_\nu^\bullet$ when a statement is not sensitive to changing the basepoint.
For example, a set of pairwise $E$-related points is \dfn{$\rho_\nu$-finite} or \dfn{$\rho_\nu$-infinite} according to whether its $\rho_\nu^\bullet$-weight is finite or infinite.
Finally, $\rho_\nu$ is a \dfn{coboundary} if there is a Borel $m:X\to\Rpos$ such that $\rho_\nu(y,x)=m(y)/m(x)$ for all $(y,x)\in E$.

\begin{convention}\label{convention:rho}
When there is no risk of confusion, we denote by $\rho$ (rather than $\rho_\mu$) the Radon--Nikodym cocycle associated to an mcp countable Borel equivalence relation $E$ on a standard probability space $(X,\mu )$.
\end{convention}

The following smoothness criterion is due to Miller \cite[Proposition 2.1]{Miller:meas_with_cocycle_II}, and independently Kaimanovich \cite[Theorem 23]{Kaimanovich:Hopf_decomp}.

\begin{lemma}\label{smooth=rho-finite}
Let $E$ be an mcp countable Borel equivalence relation on a standard probability space $(X,\mu)$.
Then $E$ is smooth on a conull subset of $X$ if and only if a.e.\ $E$-class is $\rho$-finite.
\end{lemma}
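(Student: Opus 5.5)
The plan is to prove the two implications separately, working on a single $E$-invariant conull set at a time. The forward direction is elementary. The backward direction is where the real work lies: we exploit the symmetry of the cocycle, namely $\rho(y,x)\rho(x,y)=1$, together with the definition of $\nu_E^\ell$ and $\nu_E^r$ in \eqref{eq:fiber_measures}.

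($\Rightarrow$) Suppose $E$ is smooth on a conull set. Discarding a null set and using mcp, we may assume this set is $E$-invariant. Then there is a Borel transversal $S$ meeting each class exactly once. Consider the mass transport with $h(x,y)=\mathbb{1}_{y\in S}$. Integrating against the left fiber measure gives $\int_X |[x]_E\cap S|\,d\mu=1$. On the other hand, $d\nu^\ell=\rho\,d\nu^r$, so the same quantity equals
\[
\int_S \sum_{x\in[y]_E}\rho(x,y)\,d\mu(y)=\int_S \rho^y([y]_E)\,d\mu(y).
\]
This integral is finite, so $\rho^y([y]_E)<\infty$ for a.e.\ $y\in S$. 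The null set of bad $y\in S$ saturates to a null set by mcp, so a.e.\ class is $\rho$-finite.

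($\Leftarrow$) Suppose a.e.\ class is $\rho$-finite. The key step is to produce a Borel transversal, or equivalently a Borel way of choosing a finite nonempty subset of each class, since a Borel finite-subset selector yields smoothness. Define $w(x)=\rho^x([x]_E)$, which is Borel and finite a.e. The cocycle identity gives $w(y)=\rho(x,y)\,w(x)$, so $\rho^x(y)/\rho^x([x]_E)$ is a basepoint-independent probability vector $p_C$ on each class $C$. For each class, the set $M_C$ of points where $p_C$ is maximal is nonempty and finite, because $p_C$ is summable. Moreover $M_C$ is Borel in the sense that $\{x : p_{[x]}(x)=\max\}$ is Borel, by writing the maximum through a countable group action via Feldman--Moore. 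Thus $M=\bigcup_C M_C$ is a Borel set meeting each class in a finite nonempty set. Choosing the least element of $M_C$ in a fixed Borel linear order gives a Borel transversal on the conull invariant set where $w<\infty$, which is smoothness.

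The main obstacle is verifying Borelness of $w$ and of the maximizer set, but both are routine with Feldman--Moore: $w(x)=\sum_{g}\rho(gx,x)$ over distinct orbit points. We also need the conull set of finiteness to be invariant, which holds because $w(y)=\rho(x,y)w(x)$ shows finiteness is a class property.
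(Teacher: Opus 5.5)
Your proof is correct. The paper gives no argument of its own for this lemma; it simply quotes it from Miller and, independently, Kaimanovich. So there is no in-text proof to match, and what you have written is a self-contained substitute for the citation.

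Both directions hold up.

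\emph{Forward direction.} Integrate the indicator of $\{(x,y)\in E : y\in S\}$ against $\mu^\ell_E$ and against $\rho\,\mu^r_E$, where $S$ is a Borel transversal. This gives the mass-transport identity $\int_S \rho^y([y]_E)\,d\mu(y)=\mu(X)=1$. So $\rho$-finiteness holds a.e.\ on $S$, and the mcp hypothesis spreads it to a.e.\ class.

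\emph{Backward direction.} Taking $x$ itself as basepoint gives $p_{[x]_E}(x)=1/w(x)$. Your set $M$ is therefore just the set of points minimizing $w$ on their class. It meets each class of the Borel conull set $\{w<\infty\}$ in a finite nonempty set, because the weights are strictly positive and sum to $1$. That set is $E$-invariant by the identity $w(y)=\rho(x,y)\,w(x)$, which holds everywhere because the paper fixes $\rho$ as a strict Borel cocycle.

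Compared with the cited sources, your route needs only the defining relation $d\mu^\ell_E=\rho\,d\mu^r_E$, the existence of Borel transversals for smooth relations, and Luzin--Novikov/Feldman--Moore for Borelness. It makes no appeal to the Hopf (conservative/dissipative) decomposition or to results on measures with a prescribed cocycle.
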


\subsection{Graphs, Treeings, and Ends}

For a graph $G$ on a vertex set $V$, let $E_G$ be the connectedness equivalence relation.
The graph is \dfn{locally countable} if every vertex has countably many neighbors.
For $Y\subseteq V$ we write $G\rest Y$ for the induced subgraph of $G$ on $Y$, and we say that $Y$ is \dfn{$G$-connected} if this induced subgraph is connected.
We say that an equivalence relation on $V$ is \dfn{$G$-connected} every all of its equivalence classes are $G$-connected. 
We write $d_G$ for graph distance and use the standard notions of geodesic paths, rays, and lines.

A graph on a standard Borel space is \dfn{Borel} if it is Borel as a subset of $X^2$.
A locally countable Borel graph $G$ on $(X,\mu)$ is \dfn{mcp} if $E_G$ is mcp, in which case $\rho$ denotes the Radon--Nikodym cocycle of $E_G$.
A Borel graph $G$ is a \dfn{graphing} of $E$ if $E_G=E$.
An acyclic graphing is called a \dfn{treeing}, and if $T$ is a treeing of $E$ then we then say that $E$ is \dfn{treed} by $T$.

\subsubsection{Lexicographically least paths and nearest points}\label{subsec:lex-least}

Let $G$ be a locally countable Borel graph on $X$.
We fix a Borel proper edge coloring of the edges of $G$ by $\N$, whose existence follows from the Feldman--Moore theorem \cite{Feldman-Moore}, (see also \cite[Theorem 3.1]{Kechris-Miller}).
For $x_0\in X$ and $A\subseteq X$, the \dfn{lexicographically least $G$-path} from $x_0$ to $A$ is the shortest $G$-path from $x_0$ to $A$ whose color sequence is lexicographically least among all shortest such paths; its endpoint is the \dfn{lexicographically nearest point} of $A$ to $x_0$.

\begin{obs}\label{lexproperties}
Suppose $(x_0,x_1,\dots,x_n)$ is the lexicographically least $G$-path from $x_0$ to $A$, and $B\subseteq A$ contains $x_n$.
Then the same path is the lexicographically least $G$-path from $x_0$ to $B$, and for each $0\leq i\leq n$, the tail $(x_i,\dots,x_n)$ is the lexicographically least $G$-path from $x_i$ to $B$.
\end{obs}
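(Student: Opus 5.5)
The statement has two parts: the path $(x_0,\dots,x_n)$ stays lexicographically least when the target shrinks from $A$ to $B$, and each tail $(x_i,\dots,x_n)$ is lexicographically least from $x_i$ to $B$. The plan is to prove the first part by comparing shortest paths to $B$ with shortest paths to $A$, and then derive the second part by a splicing argument that uses the first part.

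For the first claim, set $n = d_G(x_0,A)$. Since $B\subseteq A$, we have $d_G(x_0,B)\geq d_G(x_0,A)=n$. Since $x_n\in B$, the given path reaches $B$ in $n$ steps, so $d_G(x_0,B)=n$.

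It follows that every shortest $G$-path from $x_0$ to $B$ has length $n$ and ends in $A$. Hence the shortest paths to $B$ form a subset of the shortest paths to $A$, and that subset contains our path. Our path has the lexicographically least color sequence among all shortest paths to $A$, so it is in particular least among this subset. Therefore it is the lexicographically least $G$-path from $x_0$ to $B$.

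For the tails, fix $i$. By the first claim we may take $B$ as the target throughout, so the whole path $(x_0,\dots,x_n)$ is the lexicographically least path from $x_0$ to $B$.

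First, $d_G(x_i,B)=n-i$. Indeed, if some path from $x_i$ to $B$ had length less than $n-i$, then concatenating it with $(x_0,\dots,x_i)$ would give a path from $x_0$ to $B$ of length less than $n$, which contradicts $d_G(x_0,B)=n$. So the tail $(x_i,\dots,x_n)$ is a shortest path from $x_i$ to $B$.

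Now suppose some shortest path $(x_i=y_i,y_{i+1},\dots,y_n)$ from $x_i$ to $B$ had a lexicographically smaller color sequence than the tail. Splice it onto the initial segment to get $(x_0,\dots,x_i,y_{i+1},\dots,y_n)$. This is a path from $x_0$ to $B$ of length $n$, and it may fail to be simple; that does not matter, since a walk of length $n$ to $B$ with $n=d_G(x_0,B)$ is automatically a geodesic. The spliced path shares its first $i$ colors with the original, and its remaining color sequence is smaller. Its full color sequence is therefore lexicographically smaller than that of $(x_0,\dots,x_n)$, which contradicts minimality.

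Two further details need a remark. Distinct paths from $x_i$ could in principle have the same color sequence, but because the edge coloring is proper, a starting vertex together with a color sequence determines the path. So "least" picks out a unique path, and no tie arises. There is no real obstacle here: the key step is simply that splicing preserves length and compares lexicographically on the differing suffix.
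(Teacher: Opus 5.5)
Your proof is correct. The paper states this as an observation without giving a proof, and your argument (comparing $d_G(x_0,B)$ with $d_G(x_0,A)$ for the first part, then splicing a hypothetically lexicographically smaller tail onto the initial segment, using properness of the coloring for uniqueness) is the routine verification the authors leave implicit.
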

While the definitions of lexicographically least paths and nearest points depend on a choice of Borel edge coloring, we will use these constructions without explicit reference to a background choice of coloring. 

\subsubsection{Ends and geodesics in acyclic graphs}\label{subsubsec:geom_acyclic_graphs}

Let $T$ be an acyclic graph on $V$.
Let $\Rays(T)$ denote the set of geodesic rays through $T$.
An \dfn{end} of $T$ is a tail-equivalence class of geodesic rays.
We write $\del_T V$ for the set of ends of $T$ and $\-V^T\defeq V\cup\del_T V$ for the associated end completion.
For $Y\subseteq V$, $\del_TY$ denotes the set of ends represented by rays in $Y$.

Given vertices $x$ and $y$, and ends $\xi$ and $\eta$ belonging to the same $T$-component, we use notation such as $[x,y]_T$, $(x,y]_T$, $[x,\xi)_T$, and $(\xi ,\eta )_T$ for the set of vertices on the geodesic between two points, with the choice of parentheses indicating whether or not a given end-point is included.
For $Y\subseteq\-V^T$, let $\Conv_TY$ be the set of vertices lying on a $T$-geodesic between two points of $Y$.
A subset $Y\subseteq V$ is \dfn{$T$-convex} if $\Conv_TY=Y$.

For a directed edge $e$ of $T$, let $\orig(e)$ and $\term(e)$ be its origin and terminus respectively, and let $e^{-1}$ be the reverse edge.
The \dfn{outgoing edge-boundary} of a subset $Y$ of $V$ is the set of all directed edges $e$ of $T$ with $\orig (e)\in Y$ and $\term (e)\not \in Y$.
A \dfn{half-space} of $T$ is a $T$-connected subset $U$ of $V$ with exactly one directed edge in its outgoing edge-boundary.
For a directed edge $e$ of $T$ we define $\Vo(e)$ to be the unique half-space whose outgoing edge-boundary contains $e$, and $\Vt(e)\defeq\Vo(e^{-1})$.

Let $T$ be a locally countable acyclic Borel graph on $X$.
For $\kappa\in\N\cup\{\aleph_0,2^{\aleph_0}\}$, we say that $T$ is \dfn{$\kappa$-ended} if each component has exactly $\kappa$ ends.
It is \dfn{essentially $\kappa$-ended} if $T\rest{Y}$ is $\kappa$-ended for some Borel $T$-convex $E_T$-complete section $Y\subseteq X$.
We also say that $T$ is \dfn{essentially finitely ended} if there exists a Borel $T$-convex $E_T$-complete section $Y \subseteq X$ such that each $T\rest Y$-component has finitely many ends.

\subsubsection{The vertex-removal topology on the end completion}

For a tree $T$ on the vertex set $V$, we use the \dfn{vertex-removal topology} on the end completion as in \cite[Subsection 2.H]{T-TD:mcp_Adams}, which is generated by all subsets of $V$ together with the sets $U\cup\del_T U$, with $U$ ranging over all half-spaces of $T$.
It follows that this topology is zero-dimensional.
It is also not hard to verify that this topology is generated by the sets $C \cup \del _T C$, where $C$ ranges over the $T$-components of $V$ after the removal of all edges incident to a finite set of vertices (which justifies the name of the topology).

However, when $T$ is not locally finite, the vertex-removal topology is not compact, although it is still Polish if $V$ is countable.
In addition, $\del_T V$ is closed in this topology. 

For a subset $A$ of $\-V^T$ we write $\cl{v}{T}(A)$ for the closure of $A$ in the vertex-removal topology.

\subsubsection{Treeings generated by functions}

Let $f:X\to X$ be a countable-to-one Borel function that is \dfn{acyclic}, i.e., $f^n(x)\neq x$ for all $n\geq 1$ and $x\in X$.
Associated to such an $f$ is the acyclic Borel graph $T_f$, with an undirected edge between $x$ and $f(x)$ for every $x \in X$.
We write $E_f$ for $E_{T_f}$.

In each $T_f$-component, the ray $(f^n(x))_{n\in\N}$ represents the \dfn{forward $f$-end} in that component; all other ends of that component are \dfn{back $f$-ends}.
We say that $f$ is $n$-ended, or essentially $n$-ended, according to whether $T_f$ is.

\subsection{Amenability}\label{subsec:amenability}

\subsubsection{$\mu$-amenability} Let $E$ be an mcp countable Borel equivalence relation on a standard probability space $(X,\mu )$.
Since the measures $\mu_E^{\ell}$ and $\mu _E^r$ are equivalent, $L^{\infty}(E,\mu_E^{\ell})$ and $L^{\infty}(E,\mu_E^r)$ are one and the same algebra, which we simply denote by $L^{\infty}(E)$. 
Similarly, we write $L^{\infty}(X)$ for the algebra $L^{\infty}(X,\mu )$ when the underlying measure class is clear from the context.

The \dfn{(Borel) full semigroup} of $E$, denoted by $[[E]]$, is the set of all Borel partial injections whose graph is contained in $E$.
This forms an inverse semigroup under composition.
We equip $L^{\infty}(E)$ and $L^{\infty}(X)$ with the left actions of $[[E]]$ defined for $\psi\in [[E]]$ by 
\[
(\psi . h)(x,y)\defeq 
\begin{cases}
h(\psi^{-1}(x),y)   &\text{if }x\in\im (\psi ) \\
0                   &\text{otherwise}
\end{cases}
\]
for $h\in L^\infty (E)$ and $(x,y)\in E$, and
\[
(\psi .f)(x)\defeq
\begin{cases}
f(\psi^{-1}(x))   &\text{if }x\in\im (\psi ) \\
0                   &\text{otherwise}
\end{cases}
\]
for $f\in L^\infty (X)$ and $x\in X$.

The equivalence relation $E$ is called \dfn{$\mu$-amenable} if there exists a linear map from $L^{\infty}(E)$ to $L^{\infty}(X)$ that is unital, positive, and $[[E]]$-equivariant.
By \cite{Connes-Feldman-Weiss} (see also \cite[Section 4.8]{Kerr-Li:book}), $E$ being $\mu$-amenable is equivalent to the existence of an $E$-invariant Borel conull set $X_0 \subseteq X$ and a sequence $(m^k)_{k \in \N}$ of Borel assignments $x \mapsto m^k_x$, of a probability vector $m^k_x\in \ell ^1([x]_E)$ to each $x \in X_0$, satisfying 
\[
\lim_{k\to\infty} \| m^k_x - m^k_y\|_1 =0
\]
for all $(x,y)\in E \rest{X_0}$.

\subsubsection{Borel 1- and 2-amenability}
In the purely Borel setting, Jackson, Kechris, and Louveau studied the notion of amenability for countable Borel equivalence relations, stratifying it into a hierarchy of properties called \dfn{Borel $\alpha$-amenability}, where $\alpha$ runs over countable ordinals \cite[Definition 2.12]{JKL}.
(The original terminology used in \cite{JKL} is ``$\alpha$-amenability,'' though ``Borel'' is now commonly added to clarify the context.) 
While it remains an open question whether this hierarchy is strict, we will find it convenient to use its first two levels, which we now define.

Let $E$ be a countable Borel equivalence relation on $X$.
A map $m: x\mapsto m_x$, assigning to each $x\in X$ a probability vector $m_x\in \ell ^1([x]_E)$, is called a \dfn{Borel} assignment if the map $E\to [0,1]$, $(x,y)\mapsto m_x(y)$ is Borel.
The relation $E$ is called 
\dfn{Borel 1-amenable} if there exists a sequence $(m^k)_{k\in\N}$ of Borel assignments $x\mapsto m^k_x$, of a probability vector $m^k_x\in \ell ^1([x]_E)$ to each $x\in X$, satisfying 
\[
\lim_{k\to\infty} \| m^k_x - m^k_y\|_1 =0
\]
for all $(x,y)\in E$.
The relation $E$ is called \dfn{Borel 2-amenable} if there exists a sequence $(m^{(k,j)})_{(k,j)\in\N\times \N}$ of Borel assignments $x\mapsto m^{(k,j)}_x$, of a probability vector $m^{(k,j)}_x\in \ell ^1([x]_E)$ to each $x\in X$, satisfying 
\[
\lim_{(k,j)\to \mathcal{F}_2} \| m^{(k,j)}_x - m^{(k,j)}_y\|_1 =0
\]
for all $(x,y)\in E$, where the limit is taken along the filter $\mathcal{F}_2$ of all subsets $A$ of $\N ^2$ for which the slice $A_k\defeq \{ j\in \N : (k,j)\in A \}$ is cofinite for cofinitely many $k\in \N$. 

Fix now a Borel probability measure $\mu$ on $X$ whose measure class is $E$-invariant. 
By \cite{Connes-Feldman-Weiss} (see also \cite[Section 4.8]{Kerr-Li:book}), $E$ being $\mu$-amenable is equivalent to $E$ being Borel 1-amenable on an $E$-invariant $\mu$-conull Borel set.
In fact, it follows from the results of \cite{JKL} that the same holds for Borel $\alpha$-amenability in place of Borel 1-amenability.
However, deducing this from \cite{JKL} is somewhat indirect, and we will only need it in the case $\alpha =2$, so we refer the reader to \cite[Proposition 4.1]{EOSS:hyperfiniteness} for a direct proof of this case.

\begin{prop}[{\cite[Proposition 4.1]{EOSS:hyperfiniteness}}]\label{2-amen_implies_mu-amen}
If an mcp countable Borel equivalence relation $E$ on a standard probability space $(X,\mu)$ is Borel 2-amenable, then $E$ is $\mu$-amenable.
\end{prop}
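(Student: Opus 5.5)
We want to show: if $E$ is Borel 2-amenable, witnessed by $(m^{(k,j)})$, then $E$ is $\mu$-amenable. The plan is to reduce the iterated limit along $\mathcal{F}_2$ to an ordinary sequence on a conull invariant set, using the measure to diagonalize. The result is then the sequential characterization of $\mu$-amenability recalled above (via \cite{Connes-Feldman-Weiss}).

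\textbf{Step 1: reduce to countably many pairs.} By Feldman--Moore, write $E=\bigcup_n \mathrm{graph}(\gamma_n)$ for Borel involutions or automorphisms $\gamma_n$ of $X$. For each $n$, $k$, $j$ put
\[
f_{n,k,j}(x)\defeq \|m^{(k,j)}_x-m^{(k,j)}_{\gamma_n x}\|_1 .
\]
This is a Borel function with values in $[0,2]$. The hypothesis says that for every $x$ and $n$, $f_{n,k,j}(x)\to 0$ along $\mathcal{F}_2$. Concretely, for cofinitely many $k$ (depending on $x$ and $n$) we have $\lim_j f_{n,k,j}(x)=0$. Set
\[
g_{n,k}(x)\defeq\limsup_j f_{n,k,j}(x).
\]
Then for each $x$ and $n$, $g_{n,k}(x)=0$ for all but finitely many $k$.

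\textbf{Step 2: diagonalize using $\mu$.} For each $k$, choose $j_k$ large enough that
\[
\mu\bigl(\{x:\ f_{n,k,j_k}(x)>g_{n,k}(x)+2^{-k}\}\bigr)<2^{-k-n}\quad\text{for all } n\le k .
\]
Such a $j_k$ exists by Egorov-type reasoning: the function $\sup_{j\ge J} f_{n,k,j}$ decreases pointwise to $g_{n,k}$ as $J\to\infty$, so the measure of the exceptional set tends to $0$. Put $m^k\defeq m^{(k,j_k)}$. By Borel--Cantelli, for each fixed $n$, $\mu$-a.e.\ $x$ satisfies
\[
f_{n,k,j_k}(x)\le g_{n,k}(x)+2^{-k}\quad\text{for all large } k .
\]
Since $g_{n,k}(x)=0$ for large $k$, we get $\|m^k_x-m^k_{\gamma_n x}\|_1\to0$ for a.e.\ $x$. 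Let $X_1$ be the conull set on which this holds for every $n$.

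\textbf{Step 3: pass to an invariant set.} Set
\[
X_0\defeq X\setminus [X\setminus X_1]_E .
\]
This set is conull because $E$ is mcp, and it is $E$-invariant. For $(x,y)\in E\rest{X_0}$ we have $y=\gamma_n x$ for some $n$ with $x\in X_1$, so $\|m^k_x-m^k_y\|_1\to0$. Restricted to $X_0$, the assignments $m^k$ are still Borel probability vectors on classes. This gives $\mu$-amenability by the cited equivalence.

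\textbf{Main obstacle.} The key point is Step 2: the inner limit in $j$ is not uniform in $x$, so we must use the measure to choose a single $j_k$. Invariance of the null set is then recovered in Step 3 via the mcp hypothesis.
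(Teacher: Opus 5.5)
The paper gives no proof of its own here: it quotes \cite[Proposition 4.1]{EOSS:hyperfiniteness} and remarks that the statement could also be extracted, less directly, from \cite{JKL}. Your argument is a correct, self-contained direct proof, after one correction noted below. You diagonalize over $j$ using the measure (continuity from above, then Borel--Cantelli) to turn the doubly indexed witness into a single sequence that works on a conull set. The mcp hypothesis then lets you pass to an $E$-invariant conull set, and only the easy direction of the sequential criterion from \cite{Connes-Feldman-Weiss} is needed (an a.e.\ Reiter sequence gives an invariant mean). What this buys is a slightly stronger conclusion: Borel 1-amenability on an $E$-invariant conull set, obtained without the indirect \cite{JKL} route.

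The correction concerns Step 1, where you misread convergence along $\mathcal{F}_2$. Saying $\{(k,j): a_{k,j}<\epsilon\}\in\mathcal{F}_2$ for every $\epsilon>0$ means: for each $\epsilon$ there is $K_\epsilon$ with $\limsup_j a_{k,j}\le\epsilon$ for all $k\ge K_\epsilon$. In other words, $\lim_k\limsup_j a_{k,j}=0$. It does \emph{not} say that $\lim_j a_{k,j}=0$ for cofinitely many $k$; the array $a_{k,j}=1/k$ converges to $0$ along $\mathcal{F}_2$ although no inner limit vanishes. So your claim that $g_{n,k}(x)=0$ for all but finitely many $k$ is false in general; what you actually have is $g_{n,k}(x)\to 0$ as $k\to\infty$.

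This does no damage. Step 2 gives $f_{n,k,j_k}(x)\le g_{n,k}(x)+2^{-k}$ for all large $k$ and $\mu$-a.e.\ $x$, which still forces $f_{n,k,j_k}(x)\to 0$. The rest of the argument is unchanged.
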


We will use \cref{2-amen_implies_mu-amen} in combination with the notion of Borel 1-coamenability, which we now define.  

\subsubsection{Borel 1-coamenability} 
Let $F \subseteq E$ be countable Borel equivalence relations on a standard Borel space $X$.
We then let $E / F$ be the standard Borel space obtained as the quotient of $E$ by the (smooth) equivalence relation $\tilde{F}$ that identifies two $E$-related pairs of points when their first coordinates are equal and their second coordinates are $F$-related.
For each $x \in X$ we define $(E/F)_x$ to be the set of all $F$-classes contained in the $E$-class of $x$, and we naturally identify $E/F$ with the set of all pairs $(x,C)$ with $x \in X$ and $C\in (E/F)_x$.
We say that $F$ is \dfn{Borel 1-coamenable} in $E$ if there exists a sequence $(p^k)_{k \in \N}$ of Borel assignments $x \mapsto p^k_x$ (i.e., the map $E/F \to [0,1]$, $(x,C) \mapsto p^k_x(C)$, is Borel), of a probability vector $p^k_x \in \ell ^1((E/F)_x)$ to each $x \in X$, satisfying 
\[
\lim_{k \to \infty} \| p^k_x - p^k_y\|_1 =0
\]
for all $(x,y)\in E$.

\begin{lemma}\label{amenable-coamenable}
    Let $F \subseteq E$ be countable Borel equivalence relations on a standard Borel space $X$.
    If $F$ is Borel 1-amenable and Borel 1-coamenable in $E$ then $E$ is Borel 2-amenable.
\end{lemma}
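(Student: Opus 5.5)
The plan is to build the doubly indexed family by composing the two given sequences. Let $(m^k)_{k\in\N}$ witness Borel 1-amenability of $F$, so each $m^k_x$ is a probability vector on $[x]_F$. Let $(p^j)_{j\in\N}$ witness Borel 1-coamenability of $F$ in $E$, so each $p^j_x$ is a probability vector on $(E/F)_x$. For each $F$-class $C$ we have to choose a probability vector on $C$ in a way that depends only on $C$ and not on a basepoint. Since $m^k$ is only defined from basepoints, we first average over basepoints. Concretely, for an $F$-class $C\subseteq[x]_E$, I would pick a distinguished point of $C$ in a Borel way. For example, fix a Borel linear order on $X$, or use a Feldman--Moore enumeration $(\gamma_n)$ of $E$, and choose the point $\gamma_n(x)\in C$ with least $n$.

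This choice depends on $x$, and that dependence is exactly what the two-parameter filter $\mathcal F_2$ is meant to absorb. So I would instead define
\[
m^{(j,k)}_x \defeq \sum_{C\in(E/F)_x} p^j_x(C)\, m^k_{c(x,C)},
\]
where $c(x,C)\in C$ is chosen Borel-ly. This is a probability vector on $[x]_E$, and it is Borel because everything is Borel on $E/F$. The indexing is arranged so that the outer index $j$ is fixed and $k\to\infty$ first, matching $\mathcal F_2$ with $j$ as the slice index.

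For $(x,y)\in E$ we have $(E/F)_x=(E/F)_y$, and the triangle inequality gives
\[
\|m^{(j,k)}_x-m^{(j,k)}_y\|_1\le \|p^j_x-p^j_y\|_1+\sum_C p^j_y(C)\,\|m^k_{c(x,C)}-m^k_{c(y,C)}\|_1 .
\]
The key step is the second term. For fixed $j$, the points $c(x,C)$ and $c(y,C)$ are $F$-related, so each summand tends to $0$ as $k\to\infty$. Because $p^j_y$ is a probability vector and the summands are bounded by $2$, dominated convergence over the countable set $(E/F)_y$ sends the whole sum to $0$ as $k\to\infty$.

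Now fix $\varepsilon>0$. For cofinitely many $j$ the first term is below $\varepsilon/2$, and for each such $j$, cofinitely many $k$ make the second term below $\varepsilon/2$. Hence the set of $(j,k)$ where the total is below $\varepsilon$ lies in $\mathcal F_2$, with $j$ playing the role of the slice coordinate. Relabeling the coordinates gives the filter exactly as in the definition.

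The main point to check carefully is Borelness of the choice $c(x,C)$ and of the assignment $(x,y)\mapsto m^{(j,k)}_x(y)$. I would handle this via a Feldman--Moore enumeration: $c(x,C)=\gamma_n(x)$ for the least $n$ with $\gamma_n(x)\in C$. This is a Borel function on $E/F$ because $F$ is Borel. Then
\[
m^{(j,k)}_x(y)=\sum_n \mathbf 1\bigl[\gamma_n(x)\text{ is the chosen point of }[y]_F\bigr]\; p^j_x([y]_F)\; m^k_{\gamma_n(x)}(y),
\]
which is a countable sum of Borel functions and hence Borel. Note that the argument never needs $c(x,C)$ to be independent of $x$; it only uses that $c(x,C)$ and $c(y,C)$ are $F$-related.
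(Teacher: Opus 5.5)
Your proof is correct and is essentially the paper's argument. The paper also forms $q^{(k,j)}_x \defeq \sum_{C\in(E/F)_x} p^k_x(C)\, m^j_{s(x,C)}$, using a Borel selector $s$ for $\tilde{F}$ obtained from Luzin--Novikov uniformization (your Feldman--Moore choice $c(x,C)$ is a concrete instance of such a selector), and concludes by the same triangle-inequality estimate along $\mathcal{F}_2$, with the coamenability index as the outer coordinate.

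One expository correction: the $x$-dependence of the chosen point $c(x,C)$ is absorbed by the Borel 1-amenability of $F$, not by the two-parameter filter. What $\mathcal{F}_2$ actually absorbs is that the rate of convergence in $k$ depends on $j$ and on the pair $(x,y)$. It is this non-uniformity that prevents you from simply choosing a diagonal $k=k(j)$.
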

\begin{proof}
    Let $(m^j)_{j\in\N}$ and $(p^k)_{k\in\N}$ be witnesses to Borel 1-amenability and Borel 1-coamenability respectively.
    By Luzin--Novikov uniformization, there exists a Borel function $s : E/F \to X$ so that the map $(x,y) \mapsto (x,s(x,[y]_F))$ is a selector for $\tilde{F}$.
    For $k,j\in \N$ and $x\in X$ define $q^{(k,j)}_x\in \ell ^1([x]_E)$ by 
    \[
    q^{(k,j)}_x \defeq \sum_{C \in (E/F)_x} p^k_x(C) m^j_{s(x,C)}.
    \]
    The triangle inequality gives the required convergence along $\mathcal{F}_2$ for the sequence $(q^{(k,j)})_{(k,j)\in\N\times\N}$.
\end{proof}

\begin{convention}
Except when Borel 1-amenability, Borel 1-coamenability, or Borel 2-amenability are explicitly mentioned, the relevant notion of amenability in this article is $\mu$-amenability.
Therefore, when the measure $\mu$ is clear from the context, and no confusion is likely, we will use ``amenable'' to mean ``$\mu$-amenable.'' 
\end{convention}

\subsection{End selection and parabolic/loxodromic subrelations}

Let $T$ be a treeing of a countable Borel equivalence relation $E$ on $X$.
Given a Borel subset $X_0$ of $X$, a \dfn{$T$-end selection} on $X_0$ is a map $x\mapsto\EC(x)$ assigning to each $x\in X_0$ a finite nonempty set of ends of $T\rest{[x]_E}$.
The end selection is said to be \dfn{Borel} if it admits a Borel lift to the standard Borel space of finite subsets of $\Rays(T)$.
For a Borel subequivalence relation $F \subseteq E$, the selection is \dfn{$F$-invariant} if $\EC(x)=\EC(y)$ for all $(x,y)\in F\rest{X_0}$.

For an mcp countable Borel equivalence relation $F$ on $(X,\mu)$, set
\[
    \Xns_F\defeq\{x\in X:\rho^x([x]_F)=\infty\}.
\]
By \cref{smooth=rho-finite}, this is the essentially unique largest $F$-invariant Borel set on which $F$ is $\mu$-nowhere smooth.

\begin{theorem}[End selection {\cite[Theorem 3.1]{Adams:indec_treed_eq_rel_end-selection}}]\label{end_selection}
Let $T$ be a treeing of an mcp countable Borel equivalence relation $E$ on $(X,\mu )$, and assume that $E$ is $\mu$-nowhere smooth. 
A Borel subequivalence relation $F$ of $E$ is amenable if and only if, after discarding a null set, there is an $F$-invariant Borel $T$-end selection, $X\rightarrow \Pow _{\mathrm{fin}}(\del_T X)$, $x \mapsto \EC (x)$. 

Moreover, if $F$ is an amenable Borel subequivalence relation of $E$ then
\begin{enumerate}[(a)]
\item every $F$-invariant Borel $T$-end selection on $\Xns_F$ satisfies $1\leq |\EC (x)|\leq 2$ for a.e.\ $x\in \Xns_F$;
\item after discarding a null set, there exists an essentially unique maximum $F$-invariant Borel $T$-end selection $x\mapsto \EC _F (x)$ on $\Xns_F$. 
That is, if $x\mapsto \EC (x)$ is any other $F$-invariant Borel $T$-end selection on a relatively conull subset of $\Xns_F$, then $\EC (x)\subseteq \EC _F(x)$ for a.e.\ $x\in \Xns_F$.
\end{enumerate} 
\end{theorem}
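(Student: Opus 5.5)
The statement is cited from Adams, so I only plan its reconstruction. There are four parts: the forward implication (amenable $\Rightarrow$ end selection), the converse, item (a), and item (b). I would do the two easy structural facts first, because the forward implication needs them.

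\emph{Finite vertex sets force smoothness.} Suppose $x\mapsto S_x$ is an $F$-invariant Borel assignment of finite nonempty subsets of $[x]_E$. Then $F$ is contained in the relation $S_x=S_y$. That relation is smooth, since $x\mapsto S_x$ is a Borel map into the standard Borel space of finite subsets of $X$. Hence $F$ is smooth wherever such an assignment exists. On $\Xns_F$ this can happen only on a null set, by \cref{smooth=rho-finite}.

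\emph{Item (a).} Suppose an $F$-invariant selection has $|\EC(x)|\geq 3$ on a positive-measure subset of $\Xns_F$. Then the branch points of $\Conv_T\EC(x)$ form a finite nonempty $F$-invariant set of vertices (for instance, the medians of triples of selected ends). This contradicts the previous paragraph.

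\emph{Item (b).} Given two $F$-invariant selections $\EC,\EC'$ on $\Xns_F$, their union $\EC\cup\EC'$ is again an $F$-invariant Borel selection, so by (a) it has size at most $2$ a.e. I would take the supremum, over all selections, of $\mu(\{x:|\EC(x)|=2\})$ and realize it by a countable union of selections. The resulting $\EC_F$ contains every other selection a.e., since otherwise adjoining a new end would either increase this measure or produce three ends. The measurability bookkeeping (Borel lifts to finite sets of rays, selecting ray representatives via lexicographically least paths) is routine.

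\emph{Converse.} Suppose $\EC$ is an $F$-invariant selection. On the part where $|\EC|=2$, the relation $F$ preserves a bi-infinite geodesic line. On the part where $|\EC|=1$, it preserves an end $\xi$. In both cases I would show that the full subrelation of $E$ preserving this end data is hyperfinite, using Busemann-function (horocyclic) level sets toward $\xi$, which is the Jackson--Kechris--Louveau argument. Hence $F$ is hyperfinite, and so amenable. If $|\EC|\geq 3$ somewhere, that part is smooth by the first paragraph, and smooth relations are amenable anyway.

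\emph{Forward implication.} This is the main obstacle. Assume $F$ is amenable. I would equip the fibers $\-{[x]_E}^T$ with a compact metrizable topology: either the vertex-removal topology suitably compactified, or the space of finite sets of vertices and ends in a Vietoris-type topology. The aim is that the $E$-action becomes a Borel field of continuous maps. Zimmer-type amenability then yields an $F$-invariant Borel assignment $x\mapsto\nu_x$ of probability measures on this fiberwise compactification. I would convert $\nu_x$ into a canonical object by orienting each edge $e$ toward the half-space $\Vt(e)$ or $\Vo(e)$ carrying $\nu_x$-mass greater than $1/2$, with care for ties and for mass sitting on vertices. These orientations either all point toward a finite set of vertices or edges, or they converge to a single end. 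The first outcome is excluded on $\Xns_F$ by the first paragraph, so a.e.\ on $\Xns_F$ we get an $F$-invariant end; on the smooth part any Borel selection works. The delicate points are non-local-finiteness, which makes compactness fail for the naive topology, and the case where the measure's mass is split evenly across an edge. In that case I would take the atoms or the support of $\nu_x$ and extract a finite set of ends, accepting up to two ends.
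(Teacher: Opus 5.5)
The paper does not prove this statement (it is quoted from Adams). Your outline is the standard argument behind Adams's theorem and is correct in outline: Zimmer amenability gives an $F$-invariant field of probability measures on a compactified end space, from which you extract a canonical finite set of vertices or ends; medians of triples give (a), the union/supremum trick gives (b), and tail-relation hyperfiniteness gives the converse. To make it airtight:
\begin{itemize}
\item Use the half-space (observers') topology, i.e.\ coherent edge orientations in $\{\pm1\}^{\text{edges}}$. It is compact metrizable even for non-locally-finite trees, and closed half-spaces along a ray shrink to its end.
\item In the tie case, an atomless $1/2$--$1/2$ split gives a finite tie segment and hence smoothness. Only a bi-infinite tie line, whose two ends are atoms of mass $1/2$, yields two ends; ``the support'' is not finite in general.
\item Say where nowhere smoothness of $E$ is used: it guarantees that a.e.\ $T$-component has an end on the part where $F$ is smooth, since a rayless component has a canonical central vertex or edge.
\item In the converse the selection is only $F$-invariant. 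Either pass to pairs (vertex, selected end) before using the successor map toward the end, or first apply \cref{retraction_via_1-end_selection,retraction_via_2-end_selection}. These use only the end selection and give a treeing of $F$, off a smooth part, that retains the selected ends.
\end{itemize}
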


Let $E$ be an mcp countable Borel equivalence relation on $(X,\mu)$ treed by $T$, and assume that $E$ is $\mu$-nowhere smooth.
For an amenable subequivalence relation $F\subseteq E$, let $\EC_F:\Xns_F\to\Pow_{\mathrm{fin}}(\del_TX)$ be the essentially unique maximum $F$-invariant Borel $T$-end selection from \cref{end_selection}.
Set
\[
    \Xpar_F\defeq\{x\in\Xns_F:|\EC_F(x)|=1\},
    \qquad
    \Xlox_F\defeq\{x\in\Xns_F:|\EC_F(x)|=2\}.
\]
These are the \dfn{parabolic} and \dfn{loxodromic} sets of $F$ in $T$.
The relation $F$ is \dfn{parabolic} in $T$ if $\Xpar_F$ is conull, and \dfn{loxodromic} in $T$ if $\Xlox_F$ is conull.
For $x\in\Xpar_F$, write $\xi_F(x)$ for the unique element of $\EC_F(x)$.
If $F$ is amenable and ergodic with $\Xns_F$ conull, then $F$ is either parabolic or loxodromic in $T$.
When $T=T_f$, we use the same adjectives for $f$ according to whether $E_f$ is parabolic or loxodromic in $T_f$.

\subsection{Nonvanishing ends and the generalized Adams dichotomy}\label{subsec:nonvanishing_along}

Let $T$ be an acyclic locally countable mcp Borel graph on $(X,\mu)$, with Radon--Nikodym cocycle $\rho$.
An end $\eta\in\del_T[x]_{E_T}$ is \dfn{$\rho$-vanishing} if $\limsup_{y\to\eta}\rho^x(y)=0$, and \dfn{$\rho$-nonvanishing} otherwise.
This does not depend on the chosen basepoint $x$ in the component. 
We suppress the cocycle $\rho$ in the terminology when it is clear from the context.

For $D\subseteq[x]_{E_T}$, the end $\eta$ is \dfn{vanishing along $D$} if
\[
    \limsup_{\substack{y\to\eta\\ y\in D}}\rho^x(y)=0,
\]
with the convention that the limsup is $0$ when $\eta$ is not an accumulation point of $D$ in the end completion.
Otherwise, $\eta$ is said to be \dfn{nonvanishing along $D$}.
Usually $D=[x]_F$ for a subrelation $F\subseteq E_T$, in which case we speak of vanishing or nonvanishing ``along the $F$-class of $x$.''

We close the preliminaries by recalling the trichotomy form of the generalized Adams dichotomy from \cite{T-TD:mcp_Adams}.

\begin{theorem}[Trichotomy for acyclic mcp graphs]\label{topography:trichotomy}
Let $T$ be a locally countable acyclic mcp Borel graph on $(X,\mu)$.
\begin{enumerate}[(a)]
\item\label{topography:trichotomy:smooth} $E_T$ is smooth on a conull set if and only if all ends of almost every $T$-component are vanishing.
\item\label{topography:trichotomy:amenable} $E_T$ is amenable and $\mu$-nowhere smooth if and only if almost every $T$-component has exactly one or two nonvanishing ends.
\item\label{topography:trichotomy:nowhere_amenable} $E_T$ is $\mu$-nowhere amenable if and only if, in almost every $T$-component, the set of nonvanishing ends is nonempty, closed, and has no isolated points.
\end{enumerate}
\end{theorem}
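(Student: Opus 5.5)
This is the trichotomy from the companion paper \cite{T-TD:mcp_Adams}, so the plan is to derive each part from \cref{smooth=rho-finite}, \cref{end_selection} and the vertex-removal topology on $\-X^T$. The basic tool is a compactness-type lemma. If all ends of a component $[x]_{E_T}$ are vanishing, then for every $\epsilon>0$ the set $\{y:\rho^x(y)\ge\epsilon\}$ is finite. Otherwise an infinite such set would accumulate, in the vertex-removal topology, either at an end, contradicting vanishing, or at a vertex of infinite degree. In the second case we pass to infinitely many distinct neighbour-half-spaces of that vertex.

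\textbf{Part (a).} For the backward direction, I would argue that if all ends vanish, then $\rho^x$ attains a maximum on the component. Choosing the finitely many maximizers, say via the lexicographic ordering of \cref{subsec:lex-least}, gives a Borel selector, so $E_T$ is smooth on a conull set. For the forward direction, suppose $E_T$ is smooth. Then each component is $\rho$-finite by \cref{smooth=rho-finite}, so $\rho^x(y)\to0$ along any sequence of distinct vertices, and in particular toward every end.

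\textbf{Part (b).} Assume $E_T$ is amenable and nowhere smooth. By \cref{end_selection} there is an invariant selection $\EC$ of one or two ends. I would show that every end outside $\EC(x)$ is vanishing. Given $\eta\notin\EC(x)$, take the half-space $U$ containing $\eta$ but missing $\Conv_T\EC(x)$. Using the mass-transport principle for the cocycle, i.e.\ the identity $\mu^\ell=\rho\,\mu^r$ from \eqref{eq:fiber_measures}, transport mass from each $y$ toward the selected ends. Then $\rho$-weights on branches pointing away from $\EC$ are summable, which forces $\rho\to0$ toward $\eta$. Conversely, the selected ends cannot all be vanishing: otherwise, by the argument in (a), the finite maximizer set would be Borel, contradicting nowhere smoothness.

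For the backward direction of (b), the one or two nonvanishing ends are Borel-selectable as the limit points of the sets $\{\rho^x\ge\epsilon\}$. This is an invariant end selection, so $E_T$ is amenable by \cref{end_selection}. Nowhere smoothness then follows from (a).

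\textbf{Part (c).} Closedness of the set of nonvanishing ends is formal: a limit of nonvanishing ends admits points of weight bounded below along it, hence is nonvanishing. Nonemptiness comes from (a), since nowhere amenable implies nowhere smooth. Now suppose the nonvanishing set had an isolated point $\eta$ on a positive-measure set. The assignment $x\mapsto\{\eta\}$ restricted to that set would then be a Borel, locally defined invariant object. I would use it to produce an amenable subrelation restricted to a non-null invariant set, namely the relation of lying in a common half-space along the ray to $\eta$. This contradicts nowhere amenability via the 1-coamenability route of \cref{amenable-coamenable} and \cref{2-amen_implies_mu-amen}. For the converse, if the set is perfect and nonempty, then by (b) no invariant positive set can be amenable, since amenability would leave at most two nonvanishing ends.

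\textbf{Main obstacle.} The hardest step is the mass-transport estimate in (b), showing that unselected ends vanish. I would attempt it by transporting unit mass $\rho$-weighted from each vertex to its nearest point on $\Conv_T\EC$, and showing via mcp invariance that the pointwise weight of a half-space away from $\EC$ is finite.
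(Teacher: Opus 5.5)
The paper does not prove this theorem; it is recalled from \cite{T-TD:mcp_Adams}, so I am assessing your outline on its own terms. The easy implications are fine: forward (a), backward (b), and backward (c) once forward (b) is known. Each of the substantive implications, however, has a genuine gap.

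\textbf{Forward (b).} Your key claim is that branches pointing away from the selected ends have finite $\rho$-weight. This is false when only one end is selected. Take the one-sided shift $\sigma$ on $\{0,1\}^{\N}$ with the uniform Bernoulli measure, after discarding the null set of eventually periodic points. Then $T_\sigma$ is a $3$-regular treeing of the amenable, nowhere smooth relation $E_\sigma$, and $\rho^{\sigma(y)}(y)=1/2$. The forward end is the only nonvanishing end and every back end vanishes. Yet the half-space behind any $y$ has $\rho^y$-weight $\sum_n 2^n\cdot 2^{-n}=\infty$.

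This is typical rather than pathological. By \cref{parabolic_description}\labelcref{item:par:ess-1-ended} and \labelcref{item:par:nowhere-finitely-ended}, the back orbits, i.e.\ the half-spaces pointing away from the selected end, are $\rho$-finite only on the essentially one-ended part. So in the parabolic case, vanishing of the unselected ends is a statement about suprema, which no summability bound of the kind you propose can deliver. It is the content of \cite[Proposition 7.5]{T-TD:mcp_Adams}, which the paper invokes for \cref{parabolic_description}\labelcref{item:par:one_nonvanishing_end}.

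Your transport to the nearest point of the selected line does work when $|\EC(x)|=2$. It shows that a.e.\ vertex of the line has a $\rho$-finite basin, and every half-space missing the line lies in one basin. So only the loxodromic half of forward (b) is covered.

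\textbf{Backward (a).} Your compactness lemma fails once $T$ is not locally finite. Vertices are isolated in the vertex-removal topology, so heavy vertices spread over infinitely many branches at a vertex of infinite degree have no accumulation point and give no pointwise contradiction. For example, take a vertex with infinitely many leaves of weights $1-1/n$. It has no ends, $\{\rho^x\ge 1/2\}$ is infinite, and the supremum is not attained, so there are no maximizers to select. Your argument is valid only for locally finite $T$, via K\"onig's lemma. In general, ruling out such configurations a.e.\ needs an additional measure-theoretic argument. Since (b) and (c) both rely on (a), this gap propagates.

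\textbf{Forward (c).} Closedness of the nonvanishing set $N$ is not formal. Take a ray toward $\eta$ along which $\rho^x\to 0$, and attach at its $n$-th vertex a side ray of constant weight $2^{-n}$. This gives nonvanishing ends converging to a vanishing one. Only the sets $\{\eta:\limsup_{y\to\eta}\rho^x(y)\ge\epsilon\}$ are closed, so closedness a.e.\ is genuine measure-theoretic content.

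Your perfectness sketch is also not well defined. $N$ may have infinitely many isolated points, so $x\mapsto\{\eta\}$ is not canonical, and ``lying in a common half-space'' is not an equivalence relation. Perfectness can instead be done directly:
\begin{enumerate}
\item Backward (a) and (b), applied on invariant pieces, give $|N|\ge 3$ a.e.
\item For an isolated $\eta\in N$, the half-spaces $U$ with $\partial_T U\cap N=\{\eta\}$ form a chain with a largest element $U_\eta$; otherwise $|N|\le 2$.
\item The sets $U_\eta$ are pairwise disjoint, and each has a canonical boundary edge.
\item Hence the relation whose classes are the $U_\eta$ is smooth, so these classes are $\rho$-finite a.e.\ by \cref{smooth=rho-finite}. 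This contradicts $\eta$ being nonvanishing.
\end{enumerate}
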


%% file: retraction_to_hyperfinite_along_tree.tex
\section{Finite-to-one retractions along a treeing}\label{sec:retractions}

The goal of this section is, for an acyclic mcp graph $T$, to give a measurable construction of \emph{finite-to-one} retractions to classes of an amenable subequivalence relation of $E_T$.

The theorem is stated relative to a non-null Borel subset $X_0\subseteq X$, rather than only for subrelations defined on all of $X$.
This extra flexibility is needed in applications. 
A given treeing $T$ of $E$ usually does not restrict to a treeing of $E|_{X_0}$, so one cannot in general reduce to the case $X_0=X$ by simply restricting.

\begin{theorem}\label{end-preserving_slide}
Let $T$ be an acyclic locally countable mcp Borel graph on a standard probability space $(X,\mu)$, let $X_0$ be non-null Borel subset of $X$, and let $F$ be an amenable $\mu\rest{X_0}$-nowhere smooth Borel subrelation of $E_T \rest{X_0}$.

Then, after discarding an $E_T$-invariant Borel null set, $F$ admits a Borel treeing $T_F$ such that for each $F$-class $C \subseteq X_0$, the identity map $C \to C$ extends uniquely to a homeomorphism $\cl{v}{T_F}(C) \to \cl{v}{T}(C)$, where both spaces are equipped with the vertex-removal topology.

In particular, if $\rho : E_T \to \Rpos$ is a Borel cocycle, then for each $x \in X_0$, the homeomorphism above restricts to a bijection between the set of $\rho$-nonvanishing ends of $T_F \rest{[x]_F}$ and the set of ends of $T \rest{[x]_{E_T}}$ which are $\rho$-nonvanishing along $[x]_F$.
\end{theorem}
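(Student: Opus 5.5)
The plan is to build $T_F$ by contracting the convex hull $H_C\defeq\Conv_T(C)$ of each $F$-class $C$ onto $C$. Concretely, I want a Borel map $r$, defined on $\bigcup_C H_C$, with the following three properties for every $F$-class $C$:
\begin{enumerate}[(i)]
\item $r$ maps $H_C$ onto $C$ and restricts to the identity on $C$;
\item each fiber $r^{-1}(c)\cap H_C$ is finite and $T$-connected (hence a finite subtree containing $c$);
\item $r$ is $F$-invariant in the sense that it depends only on $C$, not on a basepoint.
\end{enumerate}
Given such an $r$, I define $T_F$ by joining $c\neq c'$ in $C$ whenever some $T$-edge of $H_C$ joins $r^{-1}(c)$ to $r^{-1}(c')$.

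Since $H_C$ is a subtree of $T$ and the fibers partition it into connected pieces, the quotient tree is again acyclic and connected. So $T_F$ is a Borel treeing of $F$. Because the fibers are finite, geodesic rays in $H_C$ and in the quotient correspond, up to finite modification. Moreover, half-spaces of $T_F\rest C$ pull back to sets of the form $U\cap C$ with $U$ a half-space of $T\rest{H_C}$, and conversely. The latter also correspond to half-spaces of $T$, since $H_C$ is convex and $\del_T H_C$ is exactly the set of accumulation points of $C$ in $\del_T X$.

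This gives the homeomorphism $\cl{v}{T_F}(C)\to\cl{v}{T}(C)$ extending the identity; uniqueness is automatic, since $C$ is dense in both. The statement about cocycles then follows directly from the definitions: $\limsup_{y\to\eta,\,y\in C}\rho^x(y)$ is computed through the same neighborhood bases on both sides.

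The first candidate for $r$ is the lexicographically nearest point map from \cref{subsec:lex-least}: send $v\in H_C$ to the lexicographically nearest point of $C$. By \cref{lexproperties}, the tail of the lex-least path from $v$ to $C$ is again lex-least, so each fiber is $T$-connected. The map is Borel and canonical, so it is $F$-invariant. The remaining issue is finiteness of the fibers, and this is where I expect the main obstacle.

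A single $c$ may attract infinitely many vertices. This happens, for instance, when $c$ has a neighbor of infinite degree through which all the other branches of $H_C$ pass. My first attempt is to exploit amenability. By Connes--Feldman--Weiss, after discarding a null set, $F=\bigcup_n F_n$ with $F_n$ finite and increasing. Each vertex $v\in H_C$ lies in $\Conv_T(K)$ for some $F_n$-class $K\subseteq C$, so let $n(v)$ be the least such $n$. I would then assign $v$ to the lexicographically nearest point of the corresponding $K$, choosing $K$ canonically. This keeps connectivity stage by stage, again by \cref{lexproperties}.

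Finiteness of the fibers of $c$ should then be enforced by a modification. If some fiber is infinite, a Borel $F$-invariant choice of the offending finite sets becomes available. Using nowhere smoothness (via \cref{smooth=rho-finite}, each $C$ is $\rho$-infinite) together with a mass-transport argument for $\rho$, I aim to show that such a configuration occurs only on an $E_T$-invariant null set, which is then discarded. If that fails, the fallback is to redistribute infinite fibers along the infinite-degree vertex among the infinitely many $C$-points beyond it, one branch per point. Then each fiber is a finite connected piece, and this preserves the correspondence of half-spaces.
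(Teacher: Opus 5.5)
Your reduction is sound and is the paper's own: an $F$-invariant Borel retraction of $H_C$ onto $C$ with finite, $T$-connected fibers yields the homeomorphism and the cocycle statement, via \cref{finite_quotient_graph} and \cref{retraction=>end-embedding}. The gap is that you never produce such a retraction, and none of the routes you sketch leads to one.

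\emph{The lexicographically-nearest-point map.} It fails, and not only on a null set. Its fibers can be ill-founded, i.e., contain a whole ray $[c,\eta)_T$ through vertices of finite degree. This is exactly how an end gets lost (\cref{fig:lex-least}). The subdivided version of \cref{example:full_convex_hull}, described in the remark at the start of \cref{sec:retractions}, gives an amenable $F$ (nowhere smooth on the original vertices) where this happens on a non-null set, at infinitely many points of every class. So the points of a class with infinite fibers need not form a finite set. There is then no finite $F$-invariant selection to witness smoothness, and no mass-transport argument can show the bad set is null.

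\emph{The hyperfinite staging.} It does not repair this. Suppose $r(v)=c$ via an $F_{n(v)}$-class $K$. A vertex $w$ on $[v,c]_T$ may have $n(w)<n(v)$ and be sent, via a different class, to a different point, so fibers need not be connected. Also, $c$ lies in an $F_n$-class for every $n$, so it can absorb new vertices at every stage, and finiteness is not gained.

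\emph{The fallback.} It only redistributes around infinite-degree vertices, which does nothing about ill-founded fibers.

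The missing idea is to use amenability through Adams's end selection (\cref{end_selection}) rather than through hyperfiniteness. After discarding a null set, this gives an $F$-invariant Borel choice of one or two ends of $T$ per $F$-class. Off an $F$-invariant smooth set, these ends lie in $\del_T H_C$ (\cref{chosen_ends_are_spanned}), so each class gets a canonical direction.
\begin{itemize}
\item One selected end $\xi$: send $y\in H_C$ to the lexicographically nearest point of $C$ among those $z\in C$ whose geodesic $[z,\xi)_T$ passes through $y$.
\item Two selected ends: do the same relative to the line $\ell$ joining them, and use the plain lexicographically nearest point of $C$ for $y\in\ell$.
\end{itemize}
Each fiber $r^{-1}(z)$ is then a $T$-convex initial segment of $[z,\xi)_T$, or a possibly degenerate tripod in the two-end case. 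Such a fiber is finite unless it contains a tail toward a selected end. Since fibers are disjoint, this happens for at most one (resp.\ two) points of $C$. Those points form a finite $F$-invariant Borel selection, so the set where it happens is smooth, hence null by nowhere smoothness (\cref{retraction_via_1-end_selection,retraction_via_2-end_selection}). With this retraction in hand, your quotient argument completes the proof.
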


The proof of \cref{end-preserving_slide} is given at the end of this section after some discussion of the strategy and development of necessary ingredients.

A simple observation needed for our discussion is that for a subset $U$ of the set of vertices $X$ of a tree $T$, the closure of $U$ in the vertex-removal topology contains the same ends as the closure of the convex hull $\Conv_T(U)$.
The standard method of retracting $T$ to $U$ is to have each vertex $x \in \Conv_T(U)$ choose the lexicographically nearest point to $x$ in $U$.
This partitions $\Conv_T(U)$ into Voronoi regions centered at points in $U$ which, when collapsed to their centers, give the tree structure on $U$, see \cref{fig:lex-least}.
This construction can also be performed with $U$ varying over the classes of an equivalence relation $F$ on $X$ to obtain a forest whose components are exactly the $F$-classes.

\begin{figure}[htbp]
\subfloat[Lexicographically least distance Voronoi regions.]{%
\input{fig_lex-least_Voronoi}%
}
\hspace{1cm}
\subfloat[The retract of $T$ to $U$.]{%
\input{fig_lex-least_retract}%
}
\caption{
A lexicographically least retraction of a tree $T$ to the set $U$ of red vertices $x, x_1, x_2, x_3, \dots$ resulting in the loss of the end $\eta$ of $T \rest{\Conv_T(U)}$.
}
\label{fig:lex-least}
\end{figure}

However, this may result in a retraction with infinite, or even ill-founded, fibers, which may cause a ``loss'' of an end of $T \rest{\Conv_T(U)}$, as shown in \cref{fig:lex-least}.

This retraction construction is formalized in the Borel setting for treed equivalence relations in \cite[Proposition 3.3(i)]{JKL}.
We use the equivalent language of retraction bundles, which we present in the first part of this section. 
The second part of the section is devoted to a modification of the standard retraction method which uses end selection to produce finite-to-one retractions, thereby preserving the end structure (in the sense of \cref{end-preserving_slide}); see \cref{fig:contrast_with_lex-least}.

\begin{figure}[htbp]
\subfloat[Voronoi regions via a selected end $\xi$.]{%
\input{fig_contrast_with_lex_Voronoi}
}
\hspace{1cm}
\subfloat[The retract of $T$ to $U$.]{%
\input{fig_contrast_with_lex_retract}%
}
\caption{Modified retraction via a selected end $\xi$}
\label{fig:contrast_with_lex-least}
\end{figure}

It is worth noting, however, that in the pmp setting the issue of having ill-founded fibers does not arise when retracting to a subequivalence relation of finite cost, as the following remark explains.

\begin{remark}[Well-founded fibers in the pmp vs.\ mcp settings]
For an acyclic pmp graph $T$ on $(X,\mu)$ and a subequivalence relation $F \subseteq E_T$ of finite cost, after discarding an $E_T$-invariant null set, for each $F$-class $C$ the standard lexicographically least retraction $r_C:\Conv_T(C)\to C$ has well-founded fibers.
Indeed, since $F$ has finite cost, after discarding an $E_T$-invariant null set, the treeing $T_F$ of $F$ obtained from these retractions (along with $T$) must be locally finite a.e.
It therefore suffices to show that if 
$x\in C$ and $r_C^{-1}(x)$ is ill-founded, then $x$ has infinite degree in $T_F$.

Fix an end $\xi$ of $T$ such that the geodesic ray $[x,\xi )_T$ is contained in $r_C^{-1}(x)$. 
Since $r_C^{-1}(x)\subseteq \Conv_T(C)$, we may recursively find a sequence $(y_n)_{n\in\N}$ in $C$ such that the directed path from $y_n$ to $x$ meets $[x,\xi )_T$ in an initial segment which gets longer with $n$.
Let $z_n$ be the last point on this path that is not in $r_C^{-1}(x)$.
Then the points $r_C(z_n)$, $n\in \N$, are distinct, and they are all adjacent to $x$ in $T_F$.

However, in the mcp setting, the lexicographically least retraction, even to an amenable subrelation, can have ill-founded fibers. For example, we can modify the graph in \cref{example:full_convex_hull} by subdividing, for each point $(x, \gamma)$ with $\gamma\neq e_{\F_e}$, the edge from $(x, \gamma)$ to $(\gamma^{-1}.x, e_{\F_2})$ into a path of length $|\gamma|_{\F_2} + 1$, declaring the $F$-class of each newly added (degree $2$) vertex to be a singleton.
This is why we need to modify the standard lexicographically least construction to obtain finite-to-one retractions, which avoid the end-loss concern as in \cref{fig:lex-least}.
\end{remark}

\subsection{Retraction bundles}

For a graph $G$ on a set $X$ and an equivalence relation $F$ on $X$, the \emph{quotient graph} $G / F$ is the graph on $X/F$ defined by declaring two $F$-classes $C_0$ and $C_1$ to be $G/F$-adjacent if and only if $C_0\neq C_1$ and there exist $x_0\in  C_0$ and $x_1\in C_1$ which are $G$-adjacent.

\begin{lemma}\label{finite_quotient_graph}
Let $T$ be a tree on a set $V$ and let $F$ be a $T$-connected finite equivalence relation on $V$. 
Then the quotient graph $T / F$ is still a tree. 
Furthermore, the quotient map $\pi : V \onto V / F$ extends uniquely to a continuous open map $\-\pi : \-{V}^T \to \-{V/F}^{T/F}$ whose restriction to $\del_T V$ is a homeomorphism $\del_T V \bij \del_{T/F} (V/F)$, where all spaces are equipped with the vertex-removal topology.
\end{lemma}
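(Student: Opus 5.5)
The plan is to work with the convexity of the classes and then to match half-spaces of $T$ with half-spaces of $T/F$. Since $F$ is $T$-connected, each $F$-class $C$ is a finite $T$-connected set, hence a finite subtree, hence $T$-convex.

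First, two consequences of acyclicity of $T$:
\begin{itemize}
\item If $C_0\neq C_1$ are $T/F$-adjacent, there is \emph{exactly one} $T$-edge joining $C_0$ to $C_1$. Two such edges, together with the geodesics inside $C_0$ and inside $C_1$, would form a cycle.
\item $T/F$ is a tree. Connectedness is inherited from $T$. For acyclicity, given a cycle $C_0,C_1,\dots,C_n=C_0$ in $T/F$ with distinct $C_i$ ($n\geq 3$), lift it by concatenating the unique edges between consecutive classes with geodesics inside each $C_i$. Because the $C_i$ are distinct and disjoint, this closed walk has no backtracking, which contradicts acyclicity of $T$.
\end{itemize}

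Next, the correspondence of ends. A geodesic ray $(x_n)$ in $T$ meets each class in a finite convex set, i.e.\ a finite interval of the ray. Collapsing consecutive repetitions of $(\pi(x_n))$ therefore gives a geodesic ray in $T/F$, and tail-equivalent rays go to tail-equivalent rays. This defines $\-\pi$ on $\del_T V$. Conversely, a ray $(C_n)$ in $T/F$ lifts, as above, to a ray in $T$ with no backtracking. These two operations are mutually inverse on ends, so $\-\pi\rest{\del_TV}$ is a bijection.

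Now the topology, via half-spaces.
\begin{itemize}
\item Take a $T/F$-half-space $W$ with outgoing edge from $C_0$ to $C_1$, and let $e$ be the unique corresponding $T$-edge. Then $\pi^{-1}(W)$ is the $T$-half-space with outgoing edge $e$, and $\-\pi^{-1}(W\cup\del_{T/F}W)=\pi^{-1}(W)\cup\del_T\pi^{-1}(W)$.
\item The preimage of a single vertex $C$ is the finite set $C$, which is open.
\end{itemize}
Hence $\-\pi$ is continuous.

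For openness, take a basic set $U\cup\del_TU$ with $U=\Vo(e)$, and split into two cases.
\begin{itemize}
\item If $e$ joins two distinct classes, $U$ is a union of classes and its image is exactly a $T/F$-half-space together with its ends.
\item If $e$ lies inside a class $C$, then the image is $\{C\}$, together with the finitely many classes of the finite set $C\cap U$, together with the union of the half-spaces $W\cup\del W$ of $T/F$ determined by those edges from $C\cap U$ to other classes that point away from $C$. Each such $W$ has preimage contained in $U$, so every end in the image is captured. The result is a union of vertices and basic open sets, hence open.
\end{itemize}
Subsets of $V$ map to subsets of $V/F$, which are open. So $\-\pi$ is open.

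Since $\del_TV$ is exactly the preimage of $\del_{T/F}(V/F)$, the restriction of $\-\pi$ to the ends is a continuous, open bijection, i.e.\ a homeomorphism. Uniqueness of the extension holds because every end is a limit of vertices along a ray and the target is Hausdorff.

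I expect the main obstacle to be the openness step when the defining edge $e$ lies inside a class. That is where finiteness of the classes is essential: it is what makes the image a finite union of basic open sets.
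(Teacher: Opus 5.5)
Your proposal is correct and takes essentially the paper's route. The paper also gets the tree structure from $T$-connectedness of $F$, matches rays by collapsing repeated classes, and derives continuity and openness from the fact that the $F$-saturated neighborhoods of an end form a neighborhood basis descending to one at its image; your half-space correspondence and Case~2 decomposition spell this out. One correction to your closing remark: $T$ need not be locally finite, so the union in Case~2 can be infinite (which is harmless), and finiteness of the classes is really what makes $\overline{\pi}$ well defined on ends and forces every ray in $U$ to eventually leave $C\cap U$.
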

\begin{proof}
That $T / F$ is a tree follows from the $T$-connectedness of $F$.
It is also clear that $\pi$ takes geodesic rays in $T$ to those in $T / F$ (after removing consecutive repeated vertices), and every geodesic ray in $T / F$ arises in this fashion. 
Furthermore, two geodesic rays in $T$ are tail equivalent if and only if their $\pi$-images in $T / F$ are tail equivalent, so $\pi$ indeed extends to a well-defined surjection $\-\pi : \-V^T \to \-{V / F}^{T / F}$, which restricts to a bijection $\del_T V \to \del_{T / F} (V / F)$.

For the continuity and openness of $\-\pi$ observe that for each end $\xi$ of $T$, the collection of all neighborhoods of $\xi$ whose intersection with $V$ is $F$-invariant forms a neighborhood basis for $\xi$, which descends to a neighborhood basis for $\-\pi(\xi)$.
\end{proof}

\begin{defn}
Let $T$ be an acyclic graph on a set $X$ and let $C$ be a subset of a $T$-component.
A \dfn{retraction} along $T$ to $C$ is a map $g : \Conv_T C \to C$ that restricts to the identity map on $C$, and has the property that $g^{-1}(x)$ is $T$-connected for every $x \in C$. 

Given such a retraction $g$, let $=_g$ be the equivalence relation of having the same image under $g$.
We let $gT$ denote the image of the quotient graph $(T \rest{\Conv_T C}) / =_g$ under the natural identification of $(\Conv_T C) / =_g$ with $C$, so that $gT$ is a tree on $C$, called the \dfn{retract} of $T$ to $C$ induced by $g$.
\end{defn}

\begin{prop}\label{finite-to-one_retract}
Let $T$ be a tree on a set $V$ and let $C$ be a subset of $V$.
Then each finite-to-one retraction $g : \Conv_T(C) \to C$ along $T$ to $C$ extends uniquely to a continuous open map $\-g : \cl{v}{T}(\Conv_T C) \to \cl{v}{gT}(C)$ whose restriction to $\del_T (\Conv_T C)$ is a homeomorphism $\del_T (\Conv_T C) \bij \del_{gT} C$.
\end{prop}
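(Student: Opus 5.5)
The plan is to reduce to \cref{finite_quotient_graph}. The fibers of $g$ form a $T$-connected finite equivalence relation $=_g$ on $\Conv_T C$, and by definition $gT$ is the quotient tree $(T\rest{\Conv_T C})/=_g$, identified with $C$. The map $x\mapsto g(x)$ is exactly the quotient map $\pi:\Conv_T C\onto(\Conv_T C)/=_g$ composed with this identification. Apply \cref{finite_quotient_graph} to the tree $T\rest{\Conv_T C}$ on the vertex set $V'\defeq\Conv_T C$ (which is $T$-connected, hence a tree) and the relation $=_g$. This gives a continuous open extension $\-\pi:\-{V'}^{T\rest{V'}}\to\-{C}^{gT}$ whose restriction to ends is a homeomorphism $\del_{T\rest{V'}}V'\bij\del_{gT}C$.

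It then remains to identify the spaces in the statement with those in the lemma.

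\begin{enumerate}[(i)]
\item Every point of $\-V^T$ lies either in $V$ or in $\del_TV$. Ends of $T\rest{V'}$ are tail classes of rays inside $V'$, so they correspond bijectively to $\del_T V'$.
\item I will check that $\cl{v}{T}(V')=V'\cup\del_T V'$. An end $\xi\notin\del_TV'$ has a half-space neighborhood missing $V'$: since $V'$ is convex, the geodesic from a point of $V'$ toward $\xi$ eventually leaves $V'$ permanently, and cutting there works. Conversely, every end represented by a ray in $V'$ is a limit of points of $V'$.
\item I will check that the subspace topology from $\-V^T$ on $V'\cup\del_TV'$ agrees with the vertex-removal topology of $T\rest{V'}$. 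A half-space of $T$ intersected with the convex set $V'$ is either empty, all of $V'$, or a half-space of $T\rest{V'}$. Conversely, each half-space of $T\rest{V'}$ is the trace of a half-space of $T$, namely the one determined by the same directed edge.
\item Likewise $\cl{v}{gT}(C)$ is just the whole end completion $C\cup\del_{gT}C$.
\end{enumerate}

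With these identifications, $\-g\defeq\-\pi$ is the desired extension. It is continuous, open, and restricts to a homeomorphism on ends.

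For uniqueness, $V'$ is dense in $\cl{v}{T}(V')$, and the target is Hausdorff: the vertex-removal topology is zero-dimensional with points closed. So any two continuous extensions of $g$ agree.

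The main obstacle is step (iii), the compatibility of topologies, together with checking that the lemma's conclusion really applies. In particular, I need that the extension is well-defined using only finiteness of the fibers; this is already handled by the lemma. The only care needed is with the convexity arguments for traces of half-spaces.
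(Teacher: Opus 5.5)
Your proposal is correct and is the paper's argument: the paper's entire proof is to apply \cref{finite_quotient_graph} to the tree $T\rest{\Conv_T C}$ and the finite $T$-connected relation $=_g$. Your steps (i)--(iv) and the density/Hausdorff uniqueness argument just make explicit what the paper leaves implicit, namely that, by convexity of $\Conv_T C$, the space $\cl{v}{T}(\Conv_T C)$ with its subspace topology is the end completion of $T\rest{\Conv_T C}$ with its own vertex-removal topology.
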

\begin{proof}
Apply \cref{finite_quotient_graph} to $T \rest{\Conv_T C}$ and the equivalence relation $=_g$.
\end{proof}

\begin{defn}
Let $T$ be an acyclic Borel graph on a standard Borel space $X$, let $X_0$ be a Borel subset of $X$, and let $F$ be a Borel subequivalence relation of $E_T\rest{X_0}$.

An \dfn{$F$-invariant retraction bundle along $T$ to $F$}, denoted $(g_x)_{x\in X_0}$, is an $F$-invariant assignment $x \mapsto g_x$ to each $x \in X_0$ of a retraction $g_x : \Conv_T [x]_F \to [x]_F$ along the tree $T \rest{[x]_{E_T}}$ to $[x]_F$. 
We say that this retraction bundle is \dfn{Borel} if the map $(x,y) \mapsto g_x(y)$ is Borel as a map from $\set{(x,y) : x\in X_0 \text{ and }y \in \Conv_T [x]_F}$ to $X_0$.

Let $(g_x)_{x\in X_0}$ be an $F$-invariant retraction bundle along $T$ to $F$. 
For each $F$-class $C$ and each point $x\in C$, the retract $g_x T$, of $T$ to $C$ induced by $g_x$, is then a tree on $C$ that does not depend on the choice of $x \in C$. 
We thus obtain a treeing $g_* T$ of $F$, whose restriction to each $F$-class $[x]_F$ is precisely $g_x T$. 
We call $g_* T$ the \dfn{retract} of $T$ to $F$ induced by the bundle $(g_x)_{x\in X_0}$.
We say that an $F$-invariant retraction bundle $(g_x)_{x\in X_0}$ is \dfn{finite-to-one} if each $g_x$ is a finite-to-one map.
\end{defn}


\begin{cor}\label{retraction=>end-embedding}
Let $T$ be an acyclic locally countable Borel graph on a standard Borel space $X$, let $X_0$ be a Borel subset of $X$, and let $F$ be a Borel subrelation of $E_T \rest{X_0}$. 
If there exists a finite-to-one $F$-invariant Borel retraction bundle along $T$ to $F$, then there exists a Borel treeing $T_F$ of $F$ such that for each $F$-class $C$, the identity map $C \to C$ extends uniquely to a homeomorphism $\cl{v}{T}(C) \bij \cl{v}{T_F}(C)$, where both spaces are equipped with the vertex-removal topology.
\end{cor}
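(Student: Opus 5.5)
The plan is to take $T_F \defeq g_*T$, the retract of $T$ to $F$ induced by the given finite-to-one $F$-invariant Borel retraction bundle $(g_x)_{x\in X_0}$, and then to identify the closures class by class using \cref{finite-to-one_retract}. The argument has three steps: Borelness of $g_*T$, the componentwise homeomorphism, and uniqueness.

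\textbf{Borelness.} For $(y,z)\in F$, put $(y,z)\in g_*T$ iff $y\neq z$ and there are $T$-adjacent $u,v\in\Conv_T[y]_F$ with $g_y(u)=y$ and $g_y(v)=z$. The set $\{(x,y): x\in X_0,\ y\in\Conv_T[x]_F\}$ is Borel, since membership in the convex hull is witnessed by two points of $[x]_F$. Because the bundle is Borel and the relevant quantifiers range over countable sets, Luzin--Novikov shows that $g_*T$ is Borel. By construction, its restriction to each $F$-class is the tree $g_xT$, so $g_*T$ is a Borel treeing of $F$.

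\textbf{Homeomorphism on each class.} Fix an $F$-class $C$ and $x\in C$, and write $g=g_x$. \Cref{finite-to-one_retract} gives a continuous open map
\[
\-g:\cl{v}{T}(\Conv_T C)\to \cl{v}{gT}(C)
\]
whose restriction to ends is a homeomorphism $\del_T(\Conv_T C)\bij\del_{gT}C$. The inclusion $\cl{v}{T}(C)\subseteq\cl{v}{T}(\Conv_T C)$ holds, and since $V$ is discrete in the vertex-removal topology, the points of $\cl{v}{T}(C)$ are $C$ together with the ends in $\del_T(\Conv_T C)$ that are accumulation points of $C$. By the simple observation noted earlier, $\cl{v}{T}(C)$ and $\cl{v}{T}(\Conv_T C)$ contain the same ends. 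In fact every end $\xi$ of $\Conv_T C$ is a limit of points of $C$: each basic neighborhood $U\cup\del_TU$ of $\xi$ contains points of $\Conv_T C$ lying on geodesics between points of $C$, and one endpoint of each such geodesic must lie in $U$.

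Hence $\cl{v}{T}(C)=C\cup\del_T(\Conv_T C)$, and likewise $\cl{v}{gT}(C)=C\cup\del_{gT}C$. The restriction $h\defeq\-g\rest{\cl{v}{T}(C)}$ is therefore the identity on $C$ and is a bijection onto $\cl{v}{gT}(C)$. It is continuous as a restriction of a continuous map.

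\textbf{Openness of the restriction.} This is the main obstacle. Openness of $\-g$ on the larger space does not immediately give openness of the restriction to $\cl{v}{T}(C)$. I would argue directly with neighborhood bases, as in the proof of \cref{finite_quotient_graph}. An end $\xi$ has a neighborhood basis in $\cl{v}{T}(\Conv_T C)$ consisting of sets of the form $U\cup\del_TU$ with $U\cap\Conv_T C$ a union of $=_g$-classes. Such sets can be found because $g^{-1}$ of each point is finite and $T$-connected, so each half-space can be shrunk past finitely many fibers. These saturated sets map exactly onto basic neighborhoods of $\-g(\xi)$ in $gT$, namely half-spaces of the quotient tree. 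Intersecting with $\cl{v}{T}(C)$ then yields that $h^{-1}$ is continuous at ends. At vertices there is nothing to check, since points of $C$ are isolated in both spaces.

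\textbf{Uniqueness.} $C$ is dense in both closures, and both spaces are Hausdorff (indeed zero-dimensional), so any continuous extension of $\mathrm{id}_C$ is determined by its values on $C$. Thus the extension is unique. Since the roles of $T$ and $T_F$ are symmetric once $h$ is a homeomorphism, the direction of the map in the statement is immaterial.
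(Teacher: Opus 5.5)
Your proposal is correct and is the paper's argument: take $T_F \defeq g_*T$ and apply \cref{finite-to-one_retract} to each $F$-class. The extra details you supply are exactly what the paper's one-line proof leaves implicit, and they check out: Borelness of $g_*T$, the identification $\cl{v}{T}(C)=C\cup\del_T(\Conv_T C)$, and openness of the restricted map via $=_g$-saturated half-space neighborhoods as in the proof of \cref{finite_quotient_graph}.
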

\begin{proof}
Given an $F$-invariant Borel retraction bundle $(g_x)_{x\in X_0}$ along $T$ to $F$, let $T_F \defeq g_* T$ be the induced retract and apply \cref{finite-to-one_retract}.
\end{proof}

\subsection{Constructing finite-to-one retractions using end-selection}

In light of \cref{retraction=>end-embedding}, to prove \cref{end-preserving_slide}, it is enough to obtain a finite-to-one $F$-invariant Borel retraction bundle to $F$ along $T$, and we do so below using a Borel $T$-end selection, along with a Borel lexicographic ordering of paths.

\begin{lemma}[Accumulation on selected ends]\label{chosen_ends_are_spanned}
Let $T$ be an acyclic locally countable Borel graph on a standard Borel space $X$, let $X_0$ be a Borel subset of $X$, and let $F$ be a Borel subequivalence relation of $E_T\rest{X_0}$. 
Suppose that there exists an $F$-invariant Borel $T$-end selection $x \mapsto \EC (x)$ on $X_0$ with $1 \le |\EC(x)| \le 2$.
Then after discarding an $F$-invariant Borel subset of $X_0$ on which $F$ is smooth, every $x \in X_0$ satisfies $\EC (x) \subseteq \del_T (\Conv_T [x]_F) \subseteq \cl{v}{T}([x]_F)$.
\end{lemma}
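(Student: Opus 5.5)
The second inclusion $\del_T(\Conv_T[x]_F)\subseteq \cl{v}{T}([x]_F)$ holds for any subset $C$ of a $T$-component, so we prove it first. Let $\eta\in\del_T(\Conv_T C)$ be represented by a ray $(z_n)$ in $\Conv_T C$, and let $U$ be a half-space containing $\eta$. Then $U$ contains a tail of the ray, so it contains some $z_n$. Since $z_n$ lies on a geodesic $[a,b]_T$ with $a,b\in C$ (or on a ray toward points of $C$), and the outgoing boundary of $U$ consists of a single edge, at least one of $a,b$ lies in $U$. Hence every basic neighbourhood $U\cup\del_T U$ of $\eta$ meets $C$, and $\eta\in\cl{v}{T}(C)$. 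The edge case where a geodesic endpoint is itself an end is handled the same way, by replacing that end with a far point of $C$ approximating it. That approximation is legitimate because $\Conv_T C$ was defined using points of $\-V^T$; to be careful, we read $\Conv_T[x]_F$ with $[x]_F\subseteq X$, so only vertex endpoints occur.

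For the first inclusion, we define the \emph{bad set} $B\defeq\{x\in X_0:\EC(x)\not\subseteq\del_T(\Conv_T[x]_F)\}$. It is $F$-invariant because both $\EC$ and $\Conv_T[x]_F$ are, and it is Borel because of the Borel lift of $\EC$ to $\Pow_{\mathrm{fin}}(\Rays(T))$. Deciding whether a ray eventually lies in $\Conv_T[x]_F$ is a countable condition, so membership in $B$ is Borel. It then suffices to show that $F\rest B$ is smooth, by producing a Borel selector, or at least a Borel finite nonempty subset of each class.

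Fix $x\in B$ and an end $\xi\in\EC(x)$ with $\xi\notin\del_T(\Conv_T C)$, where $C=[x]_F$. Since $\Conv_T C$ is $T$-convex and does not accumulate to $\xi$, the geodesic ray from any point of $\Conv_T C$ toward $\xi$ eventually leaves $\Conv_T C$ and never returns. We therefore obtain a unique last vertex $p_\xi(C)\in\Conv_T C$ on $[c,\xi)_T$, and it is independent of $c\in C$: for two points of $C$ the rays toward $\xi$ merge, and their common tail leaves the convex set at one well-defined point, the \emph{projection} of $\xi$ onto $\Conv_T C$. Now take the lexicographically nearest point of $C$ to $p_\xi(C)$; this is a single point of $C$ determined by $C$ and $\xi$. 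Doing this for each of the at most two bad ends in $\EC(x)$ yields a nonempty set of at most two points of $C$, which depends only on $C$ by $F$-invariance of $\EC$. This is a Borel finite selection on $F\rest B$, and it is Borel by Luzin--Novikov type arguments applied to the Borel lift of $\EC$. Hence $F\rest B$ is smooth, and discarding $B$ proves the lemma.

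The main obstacle is verifying Borelness. We must check that the projection $p_\xi(C)$ is computed in a Borel way from a ray representing $\xi$ and from $x$. To do this, we take the representing ray $r$ chosen by the Borel lift, and for each $n$ decide whether $r(n)\in\Conv_T[x]_F$. That condition is equivalent to $r(n)$ lying on $[a,b]_T$ for some $a,b\in[x]_F$, which is a countable union of Borel conditions. We then take the last $n$ for which it holds; a last one exists precisely when $\xi$ is bad, provided $r(0)$ is in the convex hull. Otherwise we first connect $x$ to the ray.
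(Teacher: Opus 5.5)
Your proposal is correct and follows essentially the same approach as the paper. The bad set is $F$-invariant, and the projections of the (at most two) bad selected ends onto the convex hull of $[x]_F$ form an $F$-invariant nonempty finite set, which witnesses smoothness; the paper does exactly this. Your extra step of passing to lexicographically nearest points of $[x]_F$, your check of the second inclusion, and your Borelness verification are details the paper leaves implicit.
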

\begin{proof}
Let $Y$ be the set of all $x \in X_0$ for which $\EC (x)$ is not contained in $\del_T (\Conv_T [x]_F)$. For each $x \in Y$ and $\xi \in \EC (x)$ not belonging to $\del_T (\Conv_T [x]_F)$, there is a unique point $v_\xi\in\Conv_T [x]_F$ closest to $\xi$. Let $V_x = \{ v_{\xi} : \xi \in \EC (x)\setminus \del_T (\Conv_T [x]_F)\}$. Each $V_x$ is nonempty and finite (has at most two elements), and the map $x \mapsto V_x$ is $F$-invariant, so it witnesses the smoothness of $F \rest{Y}$.
\end{proof}

\begin{lemma}[Retraction via one end]\label{retraction_via_1-end_selection}
Let $T$ be an acyclic locally countable Borel graph on a standard Borel space $X$, 
let $X_1$ be a Borel subset of $X$, and let $F$ be a Borel subequivalence relation of $E_T\rest{X_1}$. 
Suppose that there exists an $F$-invariant Borel $T$-end selection $x \mapsto \xi_x : X_1 \to \del_T X$ on $X_1$ in which each $F$-class selects exactly one $T$-end. 

Then there is an $F$-invariant Borel subset $Y$ of $X_1$ on which $F$ is smooth, and a finite-to-one $F$-invariant Borel retraction bundle $x \mapsto g_x$ to $F \rest{X_1 \setminus Y}$ along $T$ such that for each $z\in [x]_F$, the vertices in $g_x^{-1}(z)$ form a finite initial segment of the geodesic ray $[z, \xi_z)_T$ (\cref{fig:one_end_retraction}).
\end{lemma}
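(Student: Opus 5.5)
First I would apply \cref{chosen_ends_are_spanned} to throw away an $F$-invariant Borel set on which $F$ is smooth. On what remains, every $x$ satisfies $\xi_x\in\del_T(\Conv_T[x]_F)$. So for each $F$-class $C$, the selected end $\xi_C$ is represented by a ray lying in $\Conv_T C$, and every geodesic ray $[z,\xi_C)_T$ with $z\in C$ is contained in $\Conv_T C$.

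The retraction is defined by sliding toward $\xi_C$. Fix $C$ and a vertex $v\in\Conv_T C$. Consider the geodesic $[v,\xi_C)_T$. The candidate image of $v$ is a point $z\in C$ such that $v\in[z,\xi_C)_T$. Among such $z$, I would pick one with $d_T(z,v)$ minimal, breaking ties by the lexicographic ordering of paths from \cref{subsec:lex-least}, and set $g_x(v)\defeq z$.

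Existence of such a $z$ needs an argument. Since $v\in\Conv_T C$, $v$ lies on a geodesic between two points of $\-C^T$, at least one of which is on the side away from $\xi_C$, or else is $\xi_C$ itself. I would handle this by first discarding the points $v$ that have no element of $C$ ``behind'' them relative to $\xi_C$. Concretely, the set $A_C$ of $v\in\Conv_TC$ for which some $z\in C$ has $v\in[z,\xi_C)_T$ is exactly $\Conv_T(C\cup\{\xi_C\})$. I expect $\Conv_T C\setminus A_C$ to be empty, except possibly for vertices lying on geodesics between two ends of $\del_T C$ other than $\xi_C$. Vertices of the latter kind lie on bi-infinite lines not passing through any point of $C$ in the direction opposite $\xi_C$. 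I would handle them by showing that the set of classes where this happens can be made smooth, or by restricting $\Conv$ appropriately. The cleanest fix is to note that the retraction only needs to be defined on $\Conv_TC$ and to verify the fiber conditions. If some $v$ has no $z$ behind it, then following $[v,\xi_C)$ one still meets $C$, because the ray eventually lies in $\Conv$ of points of $C$. So I would instead map $v$ to the \emph{first} point of $C$ met along $[v,\xi_C)_T$ when nothing lies behind.

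Now compare the two rules. Consider the rule ``map $v$ to the first point of $C$ on $[v,\xi_C)_T$,'' and write $g(v)$ for it. It is uniform and $F$-invariant, since it depends only on $C$ and $\xi_C$. Its fibers are contained in the backward cones of points $z$, which are generally infinite, so this rule is not finite-to-one. Therefore I would use the rule ``map $v$ to the nearest point of $C$ behind it'' instead. Under that rule, the fiber $g^{-1}(z)$ consists of the vertices $v\in[z,\xi_C)_T$ for which $z$ is the nearest-behind point. This is an initial segment of $[z,\xi_C)_T$, because if $v'$ lies between $z$ and $v$ then any point behind $v'$ that is closer to $v'$ than $z$ is also behind $v$ and closer to $v$.

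Finiteness of the fibers is the main obstacle. The segment is finite as soon as some other point of $C$ has its ray to $\xi_C$ merging into $[z,\xi_C)$ at a point where it is closer than $z$. If $g^{-1}(z)$ is the entire ray, then $z$ is the unique point of $C$ farthest back along the ray toward $\xi_C$. I would argue that this happens for at most one $z$ per class, or for finitely many per class after tie-breaking. Such a canonical finite set witnesses smoothness, so I would add these classes to $Y$. Connectedness of the fibers follows from the initial-segment property. Borelness follows from Luzin--Novikov together with the Borel lift of $\xi$.
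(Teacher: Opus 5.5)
Your final rule is exactly the paper's construction. After \cref{chosen_ends_are_spanned}, send $v\in\Conv_T C$ to the lexicographically nearest point of $C$ among those $z$ with $v\in[z,\xi_C)_T$. Fibers are then initial segments of $[z,\xi_C)_T$ by the tail property in \cref{lexproperties}. An infinite fiber is the whole ray, and the classes carrying one form the smooth set $Y$. Two parts of your write-up need correcting, though.

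First, the existence worry is unfounded, and you should delete the fallback rule rather than keep it in reserve. In the paper, $\Conv_T C$ is the union of the geodesics $[a,b]_T$ with $a,b\in C$. Take $v\in[a,b]_T$. Either $v\in\{a,b\}$, or the directions from $v$ toward $a$ and toward $b$ are different. The ray $[v,\xi_C)_T$ leaves $v$ in at most one direction, so for one of them, say $a$, the concatenation $[a,v]_T\cup[v,\xi_C)_T$ has no backtracking, and hence $v\in[a,\xi_C)_T$. So every vertex of $\Conv_T C$ has a point of $C$ behind it. Vertices on lines between two other ends in the closure of $C$ are no exception: pick whichever of those two ends is not in the direction of $\xi_C$ from $v$; the half-space at $v$ containing it contains points of $C$. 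The fallback ``first point of $C$ on $[v,\xi_C)_T$'' would not have worked anyway. That ray need not meet $C$ at all; for instance, $C$ could consist of pendant vertices hanging off a ray to $\xi_C$. And wherever it was used, the fiber of the image point would contain vertices off $[z,\xi_C)_T$, contradicting the conclusion of the lemma.

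Second, the finiteness step needs no hedging about ``finitely many after tie-breaking.'' Suppose $g^{-1}(z)=[z,\xi_C)_T$ and $g^{-1}(z')=[z',\xi_C)_T$. The two rays share a tail, and every vertex of that tail would have image both $z$ and $z'$, so $z=z'$. Hence each class has at most one point with infinite fiber. This gives an $F$-invariant Borel selector on $Y$, which witnesses that $F\rest{Y}$ is smooth.
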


\begin{figure}[htbp]
\subfloat[Voronoi tesselation via one selected end $\xi_x$]{%
\input{fig_parabolic_Voronoi}%
}
\hspace{0.5cm}
\subfloat[Retract via one selected end]{%
\input{fig_parabolic_retract}%
}
\caption{
Retraction of $T$ to $[x]_F$ (depicted in red) via one selected end $\xi_x$.
}
\label{fig:one_end_retraction}
\end{figure}

\begin{proof}
By \cref{chosen_ends_are_spanned} we may assume that $\xi_x \in \del_T (\Conv_T [x]_F)$ for all $x \in X_1$. 
For $x\in X_1$ and $y\in\Conv_T [x]_F$ define $g_x (y)$ to be the lexicographically nearest point to $y$ through $T$ among points in $[x]_F$ whose geodesic to $\xi_x$ passes through $y$, see \cref{fig:one_end_retraction}. 
Then $x\mapsto g_x$ is $F$-invariant, and $g_x$ restricts to the identity map on $[x]_F$ for each $x\in X_1$. 
For each $z\in [x]_F$, the preimage $g_x^{-1}(z)$ is contained in $[z,\xi_x )_T$, and it is $T$-convex by \cref{lexproperties}. 
If such a preimage $g_x^{-1}(z)$ is infinite then it contains all of $[z, \xi _x )_T$, which can happen for at most one $z\in [x]_F$. Thus $F$ is smooth on the set $Y$, of all $x\in X_1$ for which there is some $z\in [x]_F$ with $g_x^{-1}(z)$ infinite. 
\end{proof}

\begin{lemma}[Retraction via two ends]\label{retraction_via_2-end_selection}
Let $T$ be an acyclic locally countable Borel graph on a standard Borel space $X$, let $X_2$ be a Borel subset of $X$, and let $F$ be a Borel subequivalence relation of $E_T\rest{X_2}$. 
Suppose that there exists an $F$-invariant Borel $T$-end selection $x \mapsto \EC (x)$ on $X_2$ with $|\EC (x)| = 2$ for each $x \in X_2$.

Then there is an $F$-invariant Borel subset $Y$ of $X_2$ on which $F$ is smooth, and a finite-to-one $F$-invariant Borel retraction bundle $(g_x)_{x\in X_2\setminus Y}$ to $F\rest{X_2\setminus Y}$ along $T$ such that for each $y \in [x]_F$, the restriction of $T$ to $g_x^{-1}(y)$ is a finite tree with at most three leaves, i.e., a possibly degenerate tripod (\cref{fig:two-end_retraction}). 
\end{lemma}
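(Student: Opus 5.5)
First I would reduce to the case where both selected ends are spanned by the class. By \cref{chosen_ends_are_spanned}, after discarding an $F$-invariant Borel set on which $F$ is smooth, I may assume that $\EC(x)=\{\xi_x,\eta_x\}\subseteq\del_T(\Conv_T[x]_F)$ for every $x\in X_2$. Write $L_x\defeq(\xi_x,\eta_x)_T$ for the bi-infinite geodesic joining the two selected ends. It depends only on the $F$-class, and it is contained in $\Conv_T[x]_F$ because both ends lie in $\del_T(\Conv_T[x]_F)$. For $y\in\Conv_T[x]_F$, let $\pi_x(y)$ be the nearest point of $L_x$ to $y$. The map $x\mapsto L_x$ is Borel, since the end selection has a Borel lift to pairs of rays, and so $\pi_x$ is Borel as well.

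Next I would define $g_x$ in two cases, modeled on \cref{retraction_via_1-end_selection}.

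\emph{Points on the line.} For $y\in L_x$, the set $[x]_F$ meets only countably many branches hanging off $L_x$ (branches at a vertex of $L_x$). For each point $z$ of $[x]_F$, the path from $z$ to $L_x$ hits $L_x$ at $\pi_x(z)$. I would let $g_x(y)$ be the lexicographically nearest point to $y$ among those $z\in[x]_F$ whose geodesic to $L_x$ passes through $y$ (so $\pi_x(z)=y$). If there is no such $z$, I would let $y$ choose between the two directions along $L_x$ in a Borel, coherent way. Concretely: find the nearest vertex $y'\in L_x$ on either side that has some $z\in[x]_F$ with $\pi_x(z)=y'$, and take the side with the nearer such vertex. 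Ties are broken by a Borel rule on the unordered pair $\{\xi_x,\eta_x\}$, for example by comparing the lexicographically least rays of the two ends in a fixed Borel ordering.

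\emph{Points off the line.} For $y\notin L_x$, I would take the lexicographically nearest point of $[x]_F$ among those whose geodesic to $L_x$ passes through $y$. If there is none, then $y$ lies on the path from some branch point to $L_x$, and I would set $g_x(y)\defeq g_x(\pi_x(y))$.

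Then I would check the fibers. $F$-invariance is immediate, because every ingredient depends only on $[x]_F$. $T$-convexity of the fibers follows from \cref{lexproperties}. A fiber $g_x^{-1}(z)$ consists of three pieces: a segment of $[z,\pi_x(z)]_T$, then possibly a segment of $L_x$ around $\pi_x(z)$ extending in each of the two directions, and then hanging paths toward $z$. I would argue this is a subtree with at most three leaves (a tripod), namely $z$ together with the two ends of the segment of $L_x$.

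Finally, smoothness handles the infinite fibers. A fiber is infinite only if one of the following holds:
\begin{enumerate}
\item the $L_x$-segment in the fiber is infinite, meaning a whole ray of $L_x$ contains no projection of points of $[x]_F$;
\item a fiber contains an infinite segment off the line.
\end{enumerate}
The second is impossible, because segments off the line are contained in finite geodesics $[z,\pi_x(z)]$. So infinite fibers only come from rays of $L_x$ missing all projections, and there are at most two such maximal rays. Hence the set $Y$ of $x$ whose class has an infinite fiber admits an $F$-invariant finite selection of centers (the corresponding $z$'s), so $F\rest Y$ is smooth.

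The main obstacle I expect is the tie-breaking on $L_x$. The choices must be Borel and $F$-invariant even though $\EC(x)$ is an unordered pair. They must also keep the fibers connected, so that the sides chosen along a gap in $L_x$ split it into two contiguous pieces. To get this, I would break ties by midpoint distance along each gap, with a fixed Borel order on ends deciding the central vertex.
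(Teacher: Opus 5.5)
Your construction is correct. Off the line it is exactly the paper's rule, but on the line you take a different route.

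\emph{The paper's route.} The paper sends each $y\in L_x$ to the lexicographically nearest point of the \emph{whole} class $[x]_F$. Applying \cref{lexproperties} along the path from $y$ to $g_x(y)$ (first with target $[x]_F$, then with the smaller target sets used off the line) shows that every vertex of that path has the same image. So connectivity and the tripod shape come for free, with no gaps and no tie-breaking. The price is that this rule can produce infinite fibers containing a ray of $L_x$ even when both ends are spanned. An example is one class point adjacent to $L_x$ with all the other class points hanging very far off it. These fibers are absorbed into the smooth set $Y$, since at most two fibers per class can contain rays of $L_x$.

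\emph{Your route.} Your rule sends $y$ to points projecting exactly to $y$, and splits each gap between the nearest occupied vertices. It costs a tie-break, but that is harmless: at a central vertex of a gap the two candidate directions are two distinct edges, so the fixed proper edge coloring decides in a Borel and $F$-invariant way.

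What your rule buys, which you did not notice, is that every gap is finite after the reduction via \cref{chosen_ends_are_spanned}. If all of $[x]_F$ projected to one side of an edge of $L_x$, then $[x]_F$, and hence the convex set $\Conv_T[x]_F$, would lie in one half-space. That contradicts both selected ends lying in $\del_T(\Conv_T[x]_F)$. So your fibers are automatically finite and your final smoothness step is vacuous.

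Likewise, your fallback for $y\notin L_x$ with no candidate $z$ never occurs. A geodesic between two points outside the half-space cut off by $y$ (the side away from $L_x$) cannot enter that half-space. So for $y\in\Conv_T[x]_F$, some point of $[x]_F$ always lies in it.
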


\begin{figure}[htbp]
\subfloat[Voronoi tesselation via the selected line $\ell_x$]{%
\input{fig_loxodromic_Voronoi}%
}
\hspace{0.5cm}
\subfloat[Retract via two ends]{%
\input{fig_loxodromic_retract}%
}
\caption{
Retraction of $T$ to $[x]_F$ via a selected line $\ell_x$ (two selected ends).
}
\label{fig:two-end_retraction}
\end{figure}

\begin{proof}
For each $x \in X_2$, let $\ell_x$ denote the geodesic line in $T$ determined by the ends in $\EC (x)$. By \cref{chosen_ends_are_spanned} we may assume that $\ell_x \subseteq \Conv_T [x]_F$ for all $x \in X_2$. 
For each $y \in \Conv_T[x]_F$, if $y \in \ell_x$ then let $g_x(y)$ be the lexicographically nearest point to $y$ in $[x]_F$ through $T$; otherwise, let $g_x(y)$ be the lexicographically nearest point to $y$ through $T$ among points in $[x]_F$ whose geodesic to $\ell_x$ passes through $y$, see \cref{fig:two-end_retraction}. 
Then $x\mapsto g_x$ is $F$-invariant, and for each $z\in [x]_F$ the restriction of $T$ to $g_x^{-1}(z)$ is a tree with at most three leaves. 
The preimage $g_x^{-1}(z)$ can be infinite for at most two $z \in [x]_F$, so $F$ is smooth on the set $Y$, of all $x\in X_2$ for which there is some $z\in [x]_F$ with $g_x^{-1}(z)$ infinite. 
\end{proof}

\begin{proof}[Proof of \cref{end-preserving_slide}] 
By \cref{end_selection}, after discarding an $E_T$-invariant Borel null set there exists an $F$-invariant Borel $T$-end selection $x \mapsto \EC (x)$ on $X$ with $1\leq |\EC (x)|\leq 2$ for all $x\in X$. 
For each $i\in \{ 1,2\}$ let $X_i$ be the set of all $x\in X$ with $|\EC (x)|=i$. 
Applying  \cref{retraction_via_1-end_selection} (to $F\rest{X_1}$ in place of $F$) and \cref{retraction_via_2-end_selection} (to $F\rest{X_2}$ in place of $F$) and discarding the $E_T$-saturation of an $F$-invariant Borel null set on which $F$ is smooth, we obtain a finite-to-one Borel retraction bundle to $F$ along $T$.
\cref{retraction=>end-embedding} therefore completes the proof.
\end{proof}

%% file: fig_lex-least_Voronoi.tex
\begin{tikzpicture}[scale=0.6, every node/.style={scale=0.9}]

\definecolor{myblue}{RGB}{30, 144, 255}

\draw[very thick] (0,0) -- (0,5.5);

\foreach \y [evaluate=\y as \n using int(\y+1)] in {1,2,3,4,5} {
\draw[very thick] (0,\y) -- (-\n,\y);

\foreach \x in {1,...,\y} {
\fill[black] (-\x,\y) circle (3.5pt);
}

\fill[red] (-\n,\y) circle (3.5pt);
\node[left, red] at (-\n-0.2, \y-0.03) {$\boldsymbol{x_\y}$};

\node[right] at (0.25,\y-0.03) {$\boldsymbol{y_\y}$};
\fill[black] (0,\y) circle (3.5pt);
}

\draw[very thick]
(0,0) to[in=100, out=-100] (-8,0);

\draw[very thick, dotted] (-8, 0) to[bend right=10] (-7.5, -1) node[font=\bfseries] 
at (-7.3, -1.4) {$\xi$};

\fill[red] (0,0) circle (3.5pt);
\node[red] at (0.5,-0.2) {$\boldsymbol{x}$};


\draw[line width=1.2pt, dotted]
(0, 5.8) to (0, 6.8);
\node[font=\bfseries] at (0, 7.3) {$\eta$};

\foreach \y in {5.8, 6.3, 6.8} {
\fill[black] (-3, \y) circle (1.5pt);
}

\foreach \i in {1,2,3} {
\pgfmathsetmacro{\x}{-6.2 - \i*0.5}
\pgfmathsetmacro{\y}{5.2 + \i*0.5}
\fill[red] (\x, \y) circle (1.5pt);
}


\draw[thick, gray, densely dashed, rounded corners=12pt] 
(-0.35, 6.0) -- (-0.35, 0.8) .. controls (-0.35, -0.5) and (0.35, -0.5) .. (0.35, 0.8) -- (0.35, 6.0);

\foreach \y [evaluate=\y as \n using int(\y+1)] in {1,2,3,4,5} {
\draw[thick, gray, densely dashed, rounded corners=6pt] 
(-0.7, \y-0.35) -- (-\n-0.3, \y-0.35) -- (-\n-0.3, \y+0.35) -- (-0.7, \y+0.35) -- cycle;
}

\end{tikzpicture}

%% file: fig_lex-least_retract.tex
\begin{tikzpicture}[scale=0.6, every node/.style={scale=0.9}]

\definecolor{myblue}{RGB}{30, 144, 255}

\draw[very thick]
(0,0) to[in=100, out=-100] (-8,0);

\draw[very thick, dotted] (-8, 0) to[bend right=10] (-7.5, -1) node[font=\bfseries] 
at (-7.3, -1.4) {$\xi$};

\foreach \y [evaluate=\y as \n using int(\y+1)] in {1,2,3,4,5} {

\draw[black, very thick] (0,0) to[bend right=30] (-\n,\y);

\fill[red] (-\n,\y) circle (3.5pt);
\node[left, red] at (-\n-0.03, \y-0.03) {$\boldsymbol{x_\y}$};

}

\fill[red] (0,0) circle (3.5pt);
\node[right, red] at (0,-0.1) {$\boldsymbol{x}$};

\foreach \y in {4.8, 5.3, 5.8} {
\fill[black] (-3, \y) circle (1.5pt);
}

\foreach \i in {1,2,3} {
\pgfmathsetmacro{\x}{-6.2 - \i*0.5}
\pgfmathsetmacro{\y}{5.2 + \i*0.5}
\fill[red] (\x, \y) circle (1.5pt);
}

\end{tikzpicture}

%% file: fig_contrast_with_lex_Voronoi.tex
\begin{tikzpicture}[scale=0.6, every node/.style={scale=0.9}]

\draw[very thick] (0,0) -- (0,5.5);

\definecolor{myblue}{RGB}{30, 144, 255}

\foreach \y [evaluate=\y as \n using int(\y+1)] in {1,2,3,4,5} {
\draw[very thick] (0,\y) -- (-\n,\y);

\draw[very thick]
(0,0) to[in=100, out=-100] (-8,0);

\draw[very thick, dotted] (-8, 0) to[bend right=10] (-7.5, -1) node[font=\bfseries] 
at (-7.3, -1.4) {$\xi$};

\foreach \x in {1,...,\y} {
\fill[black] (-\x,\y) circle (3.5pt);
}

\fill[red] (-\n,\y) circle (3.5pt);
\node[left, red] at (-\n-0.2, \y-0.03) {$\boldsymbol{x_\y}$};

\node[right] at (0.25,\y-0.03) {$\boldsymbol{y_\y}$};
\fill[black] (0,\y) circle (3.5pt);
}

\fill[red] (0,0) circle (3.5pt);
\node[red] at (0.5,-0.2) {$\boldsymbol{x}$};


\draw[line width=1.2pt, dotted]
(0, 5.8) to (0, 6.8);
\node[font=\bfseries] at (0, 7.3) {$\eta$};

\foreach \y in {5.8, 6.3, 6.8} {
\fill[black] (-3, \y) circle (1.5pt);
}

\foreach \i in {1,2,3} {
\pgfmathsetmacro{\x}{-6.2 - \i*0.5}
\pgfmathsetmacro{\y}{5.2 + \i*0.5}
\fill[red] (\x, \y) circle (1.5pt);
}


\draw[thick, gray, densely dashed, rounded corners=6pt] 
(-0.4, -1.1) -- (0.25, -0.4) -- (0.35, 0.35) -- (-0.35, 0.35) -- (-0.35, -0.2) -- (-0.8, -0.5);

\foreach \y [evaluate=\y as \n using int(\y+1)] in {1,2,3,4,5} {
\draw[thick, gray, densely dashed, rounded corners=6pt] 
(0.35, \y-0.35) -- (-\n-0.3, \y-0.35) -- (-\n-0.3, \y+0.35) -- (0.35, \y+0.35) -- cycle;
}

\end{tikzpicture}

%% file: fig_contrast_with_lex_retract.tex
\begin{tikzpicture}[scale=0.6, every node/.style={scale=0.9}]

\definecolor{myblue}{RGB}{30, 144, 255}

\draw[very thick]
(0,0) to[in=100, out=-100] (-8,0);

\draw[very thick, dotted] (-8, 0) to[bend right=10] (-7.5, -1) node[font=\bfseries] 
at (-7.3, -1.4) {$\xi$};

\draw[black, very thick] (0,0) to (-2,1);

\foreach \y [evaluate=\y as \n using int(\y+1)] in {1,2,3,4,5} {

\draw[black, very thick] (-\n,\y) to (-\n-1,\y+1);

\fill[red] (-\n,\y) circle (3.5pt);
\node[left, red] at (-\n-0.03, \y-0.03) {$\boldsymbol{x_\y}$};
}

\fill[red] (0,0) circle (3.5pt);
\node[right, red] at (0,-0.1) {$\boldsymbol{x}$};

\draw[very thick, dotted] (-7, 6) to (-7.9, 6.9) node[font=\bfseries] at (-8.1,7.2) {$\eta$};

\end{tikzpicture}

%% file: fig_parabolic_Voronoi.tex
\begin{tikzpicture}[scale=0.7, line join=round, line cap=round]

\definecolor{myblue}{RGB}{30, 144, 255}
\definecolor{myred}{RGB}{220, 20, 20}

\tikzstyle{bnode}=[circle, fill=black, inner sep=1.5pt]
\tikzstyle{rnode}=[circle, fill=myred, inner sep=1.5pt]


\coordinate (N0) at (0, 0);
\coordinate (N1) at (1.5, 0.6);
\coordinate (N2) at (2.6, 0.6);
\coordinate (N3) at (3.6, 0.2);
\coordinate (N4) at (4.5, -0.2);
\coordinate (N5) at (5.3, -0.5);
\coordinate (N6) at (6.3, -1.3);
\coordinate (N7) at (7.3, -1.3);

\coordinate (U1)  at (-0.4, 0.8);
\coordinate (D1)  at (-0.5, -0.9);
\coordinate (D1R) at (-1.3, -0.4);
\coordinate (D1B) at (-0.4, -1.9);

\coordinate (U2)  at (1.5, 1.7);
\coordinate (U2R) at (2.1, 2.6);

\coordinate (D2)  at (2.9, -0.4);
\coordinate (D2R) at (2.1, -1.2);
\coordinate (D2B) at (3.3, -1.1);

\coordinate (D4)  at (4.8, -1.2);
\coordinate (D4R) at (5.2, -2.0);

\coordinate (U5)   at (6.3, 0.3);
\coordinate (U5R1) at (6.3, 1.3);
\coordinate (U5B)  at (7.1, -0.1);
\coordinate (U5R2) at (7.7, 0.3);

\begin{scope}[densely dashed, thick, gray!80!black]
\draw plot [smooth cycle, tension=0.7] coordinates {(-0.1, 0.2) (1.5, 0.9) (2.6, 0.9) (3.7, 0.4) (3.7, -0.1) (2.6, 0.3) (1.5, 0.3) (-0.1, -0.4)};
\draw plot [smooth cycle, tension=0.7] coordinates {(-1.6, -0.25) (-1.2, -0.15) (-0.25, -0.8) (-0.5, -1.2) (-1.4, -0.7)};

\draw plot [smooth, tension=0.7] coordinates {(-0.65, -2.6) (-0.6, -1.75) (-0.2, -1.75) (-0.2, -2.6)};

\draw plot [smooth, tension=0.7] coordinates {(-0.85, 1.45) (-0.6, 0.6) (-0.1, 0.65) (-0.3, 1.6)};

\draw plot [smooth cycle, tension=0.7] coordinates {(1.25, 1.6) (2, 2.9) (2.4, 2.7) (1.7, 1.5)};
\draw plot [smooth cycle, tension=0.7] coordinates {(1.8, -1.2) (2.7, -0.1) (3.2, -0.4) (2.2, -1.5)};

\draw plot [smooth, tension=0.7] coordinates {(2.95, -2) (3.1, -0.95) (3.6, -0.9) (3.45, -2.1)};

\draw plot [smooth cycle, tension=0.7] coordinates {(4.3, 0.1) (4.7, 0.1) (4.9, -0.9) (5.5, -2.1) (5.2, -2.4) (4.6, -1.4) (4.3, -0.5)};
\draw plot [smooth cycle, tension=0.7] coordinates {(5.1, -0.4) (6.05, 0.5) (6.1, 1.5) (6.45, 1.5) (6.5, 0.2) (5.3, -0.75)};
\draw plot [smooth cycle, tension=0.7] coordinates {(6.9, 0) (7.7, 0.6) (7.9, 0.2) (7.1, -0.4)};
\draw plot [smooth cycle, tension=0.7] coordinates {(6.2, -1.05) (7.4, -1.05) (7.4, -1.55) (6.2, -1.55)};
\end{scope}

\begin{scope}[line width=1pt]
\draw (N0) to[bend right=10] (U1);
\draw (U1) -- (-0.6, 1.5);
\draw (U1) -- (-1.1, 1.0);
\draw (N0) to[bend left=10] (D1);
\draw (D1) -- (D1R);
\draw (D1R) -- (-1.5, 0.3);
\draw (D1R) -- (-2.1, -0.3);
\draw (D1) -- (D1B);
\draw (D1B) -- (-0.4, -2.6);

\draw (N1) to[bend right=5] (U2);
\draw (U2) -- (0.9, 2.4);
\draw (U2) -- (1.3, 2.7);
\draw (U2) -- (U2R);
\draw (U2R) -- (2.1, 3.4);
\draw (U2R) -- (2.7, 2.9);

\draw (N2) to[bend left=5] (D2);
\draw (D2) -- (D2R);
\draw (D2R) -- (1.8, -1.7);
\draw (D2) -- (D2B);
\draw (D2B) -- (3.7, -1.3);
\draw (D2B) -- (3.2, -2.0);

\draw (N4) to[bend left=5] (D4);
\draw (D4) -- (4.3, -1.8);
\draw (D4) -- (D4R);
\draw (D4R) -- (4.9, -2.8);

\draw (N5) to[bend left=5] (U5);
\draw (U5) -- (U5R1);
\draw (U5R1) -- (6.1, 2.1);
\draw (U5R1) -- (6.8, 1.9);
\draw (U5) -- (U5B);
\draw (U5B) -- (U5R2);
\draw (U5R2) -- (7.5, 0.9);
\draw (U5R2) -- (7.9, 0.9);
\draw (U5R2) -- (8.3, 0.3);
\end{scope}

\draw[color=myblue, line width=2pt] (N0) 
to[out=30, in=190] (N1) 
to[out=10, in=170] (N2) 
to[out=-10, in=165] (N3) 
to[out=-15, in=160] (N4) 
to[out=-20, in=165] (N5) 
to[out=-40, in=160] (N6) -- (N7);

\draw[color=myblue, line width=2pt] (N7) -- (7.8, -1.15);

\draw[color=myblue, line width=1.2pt, dotted] (7.8, -1.15) to[out=20, in=180] (8.6, -0.8) node[right, text=myblue, font=\bfseries] {$\xi_x$};


\node[rnode] at (N0) {};
\node[text=myred, below right=1pt, inner sep=0pt, font=\bfseries] at ($(N0) + (0.1, 0.1)$) {$x$};
\node[bnode] at (N1) {};
\node[bnode] at (N2) {};
\node[bnode] at (N3) {};
\node[bnode] at (N4) {};
\node[bnode] at (N5) {};
\node[rnode] at (N6) {};
\node[bnode] at (N7) {};

\node[bnode] at (U1) {};
\node[bnode] at (D1) {};
\node[rnode] at (D1R) {};
\node[bnode] at (D1B) {};

\node[bnode] at (U2) {};
\node[rnode] at (U2R) {};

\node[bnode] at (D2) {};
\node[rnode] at (D2R) {};
\node[bnode] at (D2B) {};

\node[bnode] at (D4) {};
\node[rnode] at (D4R) {};

\node[bnode] at (U5) {};
\node[rnode] at (U5R1) {};
\node[bnode] at (U5B) {};
\node[rnode] at (U5R2) {};

\end{tikzpicture}

%% file: fig_parabolic_retract.tex
\begin{tikzpicture}[scale=0.7, line join=round, line cap=round]

\definecolor{myblue}{RGB}{30, 144, 255}
\definecolor{myred}{RGB}{220, 20, 20}

\tikzstyle{bnode}=[circle, fill=black, inner sep=1.5pt]
\tikzstyle{rnode}=[circle, fill=myred, inner sep=1.5pt]


\coordinate (N0) at (0, 0);
\coordinate (N1) at (1.5, 0.6);
\coordinate (N2) at (2.6, 0.6);
\coordinate (N3) at (3.6, 0.2);
\coordinate (N4) at (4.5, -0.2);
\coordinate (N5) at (5.3, -0.5);
\coordinate (N6) at (6.3, -1.3);
\coordinate (N7) at (7.3, -1.3);

\coordinate (U1)  at (-0.4, 0.8);
\coordinate (D1)  at (-0.5, -0.9);
\coordinate (D1R) at (-1.3, -0.4);
\coordinate (D1B) at (-0.4, -1.9);

\coordinate (U2)  at (1.5, 1.7);
\coordinate (U2R) at (2.1, 2.6);

\coordinate (D2)  at (2.9, -0.4);
\coordinate (D2R) at (2.1, -1.2);
\coordinate (D2B) at (3.3, -1.1);

\coordinate (D4)  at (4.8, -1.2);
\coordinate (D4R) at (5.2, -2.0);

\coordinate (U5)   at (6.3, 0.3);
\coordinate (U5R1) at (6.3, 1.3);
\coordinate (U5B)  at (7.1, -0.1);
\coordinate (U5R2) at (7.7, 0.3);

\begin{scope}[line width=1pt]
\draw (D1R) to[bend left=8] (D1B);
\draw (D1R) to (N0);
\draw (N0) to (U1);
\draw (N0) to (U2R);
\draw (U2R) -- (2.1, 3.4);
\draw (U2R) -- (2.7, 2.9);
\draw (U2R) -- (1.7, 3.2);
\draw (U2R) -- (1.5, 2.7);
\draw (N0) to (D2R);
\draw (D2R) to ($(D2B)+(-0.4,-0.8)$);
\draw (D2R) -- (1.8, -1.7);
\draw (N0) to[bend left=20] (D4R);
\draw (D4R) to ($(D4R)+(-1,0.3)$);
\draw (D4R) to[bend left=5] ($(D4R)+(-1,0)$);
\draw (D4R) to[bend left=20] (U5R1);
\draw (U5R1) to[bend left=10] (N6);
\draw (U5R1) to (U5R2);
\draw (U5R1) -- (6.1, 2.1);
\draw (U5R1) -- (6.8, 1.9);
\draw (U5R2) -- (7.5, 0.9);
\draw (U5R2) -- (7.9, 0.9);
\draw (U5R2) -- (8.3, 0.3);
\draw (N6) to (N7);
\end{scope}

\node[rnode] at (N0) {};
\node[text=myred, below right=1pt, inner sep=0pt, font=\bfseries] at ($(N0) + (-0.2, -0.2)$) {$x$};
\node[rnode] at (N6) {};

\node[rnode] at (D1R) {};

\node[rnode] at (U2R) {};

\node[rnode] at (D2R) {};

\node[rnode] at (D4R) {};

\node[rnode] at (U5R1) {};
\node[rnode] at (U5R2) {};

\end{tikzpicture}

%% file: fig_loxodromic_Voronoi.tex
\begin{tikzpicture}[scale=0.7, line join=round, line cap=round]

\definecolor{myblue}{RGB}{30, 144, 255}
\definecolor{myred}{RGB}{220, 20, 20}

\tikzstyle{bnode}=[circle, fill=black, inner sep=1.5pt]
\tikzstyle{rnode}=[circle, fill=myred, inner sep=1.5pt]


\coordinate (N0) at (0, 0);
\coordinate (N1) at (1.5, 0.6);
\coordinate (N2) at (2.6, 0.6);
\coordinate (N3) at (3.6, 0.2);
\coordinate (N4) at (4.5, -0.2);
\coordinate (N5) at (5.3, -0.5);
\coordinate (N6) at (6.3, -1.3);
\coordinate (N7) at (7.3, -1.3);

\coordinate (U1)  at (-0.4, 0.8);
\coordinate (D1)  at (-0.5, -0.9);
\coordinate (D1R) at (-1.3, -0.4);
\coordinate (D1B) at (-0.4, -1.9);

\coordinate (U2)  at (1.5, 1.7);
\coordinate (U2R) at (2.1, 2.6);

\coordinate (D2)  at (2.9, -0.4);
\coordinate (D2R) at (2.1, -1.2);
\coordinate (D2B) at (3.3, -1.1);

\coordinate (D4)  at (4.8, -1.2);
\coordinate (D4R) at (5.2, -2.0);

\coordinate (U5)   at (6.3, 0.3);
\coordinate (U5R1) at (6.3, 1.3);
\coordinate (U5B)  at (7.1, -0.1);
\coordinate (U5R2) at (7.7, 0.3);

\begin{scope}[densely dashed, thick, gray!80!black]

\draw plot [smooth cycle, tension=0.7] coordinates {
(-0.6, -2.1)
(-0.8, -0.9)
(0.1, 0.4)
(1.6, 0.8)
(1.6, 0.4)
(0.2, -0.4)
(-0.2, -1.4)
(-0.1, -2.1)
};

\draw plot [smooth cycle, tension=0.7] coordinates {
(-1.3, -0.1) (-1.0, -0.4) (-1.3, -0.7) (-1.6, -0.4)
};

\draw plot [smooth, tension=0.7] coordinates {
(-0.3, 1.5) (-0.2, 0.7) (-0.5, 0.6) (-0.85, 1.4)
};

\draw plot [smooth cycle, tension=0.7] coordinates {
(1.2, 1.6) (1.9, 2.8) (2.4, 2.7) (1.7, 1.5)
};

\draw plot [smooth cycle, tension=0.7] coordinates {
(2.4, 0.9) (3.8, 0.4) (3.8, -0.1) (2.95, 0.2)  (3.1, -0.5) (2.3, -1.4) (1.8, -1.2) (2.6, -0.3)
};

\draw plot [smooth, tension=0.7] coordinates {
(3.5, -1.9) (3.6, -1) (3.1, -0.9) (2.95, -1.8) 
};

\draw plot [smooth cycle, tension=0.7] coordinates {
(4.3, 0) (4.65, 0) (5, -0.9) (5.5, -2.0) (5, -2.2) (4.5, -1.2)
};

\draw plot [smooth cycle, tension=0.7] coordinates {
(5.2, -0.15) (6.3, -1) (7.4, -1) (7.4, -1.6) (6.1, -1.6) (5.1, -0.7)
};

\draw plot [smooth cycle, tension=0.7] coordinates {
(6.05, 0.1) (6.05, 1.5) (6.55, 1.4) (6.55, 0.1)
};

\draw plot [smooth cycle, tension=0.7] coordinates {
(6.9, -0.4) (6.9, 0.1) (7.8, 0.6) (7.9, 0)
};

\end{scope}

\begin{scope}[line width=1pt]
\draw (N0) to[bend right=10] (U1);
\draw (U1) -- (-0.6, 1.5);
\draw (U1) -- (-1.1, 1.0);
\draw (D1) -- (D1R);
\draw (D1R) -- (-1.5, 0.3);
\draw (D1R) -- (-2.1, -0.3);

\draw (N1) to[bend right=5] (U2);
\draw (U2) -- (0.9, 2.4);
\draw (U2) -- (1.3, 2.7);
\draw (U2) -- (U2R);
\draw (U2R) -- (2.1, 3.4);
\draw (U2R) -- (2.7, 2.9);

\draw (N2) to[bend left=5] (D2);
\draw (D2) -- (D2R);
\draw (D2R) -- (1.8, -1.7);
\draw (D2) -- (D2B);
\draw (D2B) -- (3.7, -1.3);
\draw (D2B) -- (3.2, -2.0);

\draw (N4) to[bend left=5] (D4);
\draw (D4) -- (4.3, -1.8);
\draw (D4) -- (D4R);
\draw (D4R) -- (4.9, -2.8);

\draw (N5) to[bend left=5] (U5);
\draw (U5) -- (U5R1);
\draw (U5R1) -- (6.1, 2.1);
\draw (U5R1) -- (6.8, 1.9);
\draw (U5) -- (U5B);
\draw (U5B) -- (U5R2);
\draw (U5R2) -- (7.5, 0.9);
\draw (U5R2) -- (7.9, 0.9);
\draw (U5R2) -- (8.3, 0.3);
\end{scope}

\draw[color=myblue, line width=2pt] 
(-0.4, -2.6)
to[bend right=10] (D1B) 
to (D1) 
to (N0) 
to[out=30, in=190] (N1) 
to[out=10, in=170] (N2) 
to[out=-10, in=165] (N3) 
to[out=-15, in=160] (N4) 
to[out=-20, in=165] (N5) 
to[out=-40, in=160] (N6) -- (N7);

\draw[color=myblue, line width=2pt] (N7) -- (7.8, -1.15);

\draw[color=myblue, line width=1.2pt, dotted] (7.8, -1.15) to[out=20, in=180] (8.6, -0.8) node[right, text=myblue, font=\bfseries] {$\ell_x$};

\draw[color=myblue, line width=1.2pt, dotted] (-0.4, -2.6) to[bend left=20,out=30,in=200] (-1.1, -3.3) node[left, text=myblue, font=\bfseries] {$\ell_x$};


\node[rnode] at (N0) {};
\node[text=myred, below right=1pt, inner sep=0pt, font=\bfseries] at ($(N0) + (0.1, 0.1)$) {$x$};
\node[bnode] at (N1) {};
\node[bnode] at (N2) {};
\node[bnode] at (N3) {};
\node[bnode] at (N4) {};
\node[bnode] at (N5) {};
\node[rnode] at (N6) {};
\node[bnode] at (N7) {};

\node[bnode] at (U1) {};
\node[bnode] at (D1) {};
\node[rnode] at (D1R) {};
\node[bnode] at (D1B) {};

\node[bnode] at (U2) {};
\node[rnode] at (U2R) {};

\node[bnode] at (D2) {};
\node[rnode] at (D2R) {};
\node[bnode] at (D2B) {};

\node[bnode] at (D4) {};
\node[rnode] at (D4R) {};

\node[bnode] at (U5) {};
\node[rnode] at (U5R1) {};
\node[bnode] at (U5B) {};
\node[rnode] at (U5R2) {};

\end{tikzpicture}

%% file: fig_loxodromic_retract.tex
\begin{tikzpicture}[scale=0.7, line join=round, line cap=round]

\definecolor{myblue}{RGB}{30, 144, 255}
\definecolor{myred}{RGB}{220, 20, 20}

\tikzstyle{bnode}=[circle, fill=black, inner sep=1.5pt]
\tikzstyle{rnode}=[circle, fill=myred, inner sep=1.5pt]


\coordinate (N0) at (0, 0);
\coordinate (N1) at (1.5, 0.6);
\coordinate (N2) at (2.6, 0.6);
\coordinate (N3) at (3.6, 0.2);
\coordinate (N4) at (4.5, -0.2);
\coordinate (N5) at (5.3, -0.5);
\coordinate (N6) at (6.3, -1.3);
\coordinate (N7) at (7.3, -1.3);

\coordinate (U1)  at (-0.4, 0.8);
\coordinate (D1)  at (-0.5, -0.9);
\coordinate (D1R) at (-1.3, -0.4);
\coordinate (D1B) at (-0.4, -1.9);

\coordinate (U2)  at (1.5, 1.7);
\coordinate (U2R) at (2.1, 2.6);

\coordinate (D2)  at (2.9, -0.4);
\coordinate (D2R) at (2.1, -1.2);
\coordinate (D2B) at (3.3, -1.1);

\coordinate (D4)  at (4.8, -1.2);
\coordinate (D4R) at (5.2, -2.0);

\coordinate (U5)   at (6.3, 0.3);
\coordinate (U5R1) at (6.3, 1.3);
\coordinate (U5B)  at (7.1, -0.1);
\coordinate (U5R2) at (7.7, 0.3);

\begin{scope}[line width=1pt]
\draw (N0) to[bend left=8] (D1B);
\draw (D1R) to (N0);
\draw (N0) to (U1);
\draw (N0) to (U2R);
\draw (U2R) -- (2.1, 3.4);
\draw (U2R) -- (2.7, 2.9);
\draw (U2R) -- (1.7, 3.2);
\draw (U2R) -- (1.5, 2.7);
\draw (N0) to (D2R);
\draw (D2R) to ($(D2B)+(-0.4,-0.8)$);
\draw (D2R) -- (1.8, -1.7);
\draw (D2R) to[bend left=20] (D4R);
\draw (D4R) to ($(D4R)+(-1,0.3)$);
\draw (D4R) to[bend left=5] ($(D4R)+(-1,0)$);
\draw (D4R) to[bend left=20] (N6);
\draw (U5R1) to[bend left=10] (U5R1);
\draw (N6) to[bend left=5] (U5R1);
\draw (U5R1) to[bend right=5] (U5R2);
\draw (U5R1) -- (6.1, 2.1);
\draw (U5R1) -- (6.8, 1.9);
\draw (U5R2) -- (7.5, 0.9);
\draw (U5R2) -- (7.9, 0.9);
\draw (U5R2) -- (8.3, 0.3);
\draw (N6) to (N7);
\end{scope}

\node[rnode] at (N0) {};
\node[text=myred, below right=1pt, inner sep=0pt, font=\bfseries] at ($(N0) + (0, -0.2)$) {$x$};
\node[rnode] at (N6) {};

\node[rnode] at (D1R) {};

\node[rnode] at (U2R) {};

\node[rnode] at (D2R) {};

\node[rnode] at (D4R) {};

\node[rnode] at (U5R1) {};
\node[rnode] at (U5R2) {};

\end{tikzpicture}

%% file: amenable_subrelations.tex
\section{The structure of amenable equivalence relations in an acyclic graph}

In this section we describe the structure of amenable subrelations of treed mcp equivalence relations.

\subsection{Nonvanishing ends along subrelations}

To describe the structure of amenable subrelations of a treed mcp equivalence relation, we first record relativizations of our characterization of smoothness and amenability in terms of nonvanishing ends.

Recall the definition of an end of $T$ vanishing along a subset given in \cref{sec:retractions}.
Also recall that $\Xns_E$ is the $\mu$-essentially unique largest $E$-invariant Borel subset of $X$ on which $E$ is $\mu$-nowhere smooth. 
Note that if $F$ is a Borel subrelation of $E$, then we have $\Xns_{F} \subseteq \Xns_{E}$.

\begin{cor}\label{relative_trichotomy-1}
Let $T$ be an acyclic locally countable mcp Borel graph on a standard probability space $(X,\mu)$, let $Y$ be non-null Borel subset of $X$, and let $F$ be an amenable subrelation of $E_T \rest{Y}$.

\begin{enumerate}[(a)]
\item\label{relative_trichotomy:smooth} $F$ is smooth on a conull set if and only if for $\mu_Y$-a.e.\ $x \in Y$, every end $\eta$ of $T\rest{[x]_E}$ is vanishing along $[x]_F$.

\item\label{relative_trichotomy:amenable} $F$ is amenable and $\mu_Y$-nowhere smooth if and only if for $\mu_Y$-a.e.\ $x \in Y$, there are either one or two ends of $T$ which are nonvanishing along $[x]_F$.
\end{enumerate}
\end{cor}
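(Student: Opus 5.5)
The plan is to reduce both parts to \cref{topography:trichotomy} applied to the treeing $T_F$ supplied by \cref{end-preserving_slide}, using the bijection between nonvanishing ends of $T_F$ and ends of $T$ nonvanishing along $F$-classes. Since $F$ is given amenable, part (b) amounts to: $F$ is $\mu_Y$-nowhere smooth if and only if a.e.\ $F$-class has one or two ends of $T$ nonvanishing along it. Part (a) says $F$ is smooth a.e.\ if and only if a.e.\ class has none. I would first decompose $Y$ into the $F$-invariant pieces $Y\setminus\Xns_F$ and $Y\cap\Xns_F$, and prove separately: on the smooth piece no end of $T$ is nonvanishing along $[x]_F$; on the nowhere smooth piece exactly one or two are. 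Both equivalences then follow at once, since a.e.\ class falls into one of the two pieces and the conclusions exclude each other.

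\emph{Smooth piece.} By \cref{smooth=rho-finite}, for a.e.\ $x\in Y\setminus\Xns_F$ we have $\rho^x([x]_F)<\infty$. Let $\eta$ be an end accumulated by $[x]_F$, and let $y_n\to\eta$ with $y_n\in[x]_F$. Then the $y_n$ are eventually distinct: each point of $X$ is isolated in the vertex-removal topology, while $\eta$ is not a point of $X$, so any fixed point is avoided eventually. Summability of $\rho^x$ over $[x]_F$ then forces $\rho^x(y_n)\to0$. Hence every end is vanishing along $[x]_F$. This step uses only the definitions.

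\emph{Nowhere smooth piece.} Apply \cref{end-preserving_slide} with $X_0\defeq Y\cap\Xns_F$ (assuming it is non-null) and $\rho$ the Radon--Nikodym cocycle of $E_T$ with respect to $\mu$. This yields, off a null set, a treeing $T_F$ of $F\rest{X_0}$ whose $\rho$-nonvanishing ends on $[x]_F$ correspond bijectively to the ends of $T$ that are nonvanishing along $[x]_F$. The key point is that $\rho$ restricted to $F\rest{X_0}$ is, up to null sets, the Radon--Nikodym cocycle of $F\rest{X_0}$ with respect to $\mu_{X_0}$. This holds because the fiber measures of $F$ are restrictions of those of $E_T$, and normalizing $\mu$ does not change the derivative. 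So $T_F$ is an acyclic mcp graph on $(X_0,\mu_{X_0})$ with cocycle $\rho$, and $E_{T_F}=F\rest{X_0}$ is amenable and nowhere smooth. \cref{topography:trichotomy}\labelcref{topography:trichotomy:amenable} then gives one or two $\rho$-nonvanishing ends of $T_F$ in a.e.\ component, and transferring through the bijection gives the claim.

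The main obstacle is checking this cocycle identification carefully, and that "nonvanishing" is basepoint-independent in the same way in both settings. A second concern is the null-set bookkeeping: \cref{end-preserving_slide} discards an $E_T$-invariant null set, which is harmless for $\mu_Y$-a.e.\ statements.
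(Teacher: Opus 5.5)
Your proof is correct. For the forward directions it matches the paper: the forward direction of (a) is the $\rho$-finiteness observation via \cref{smooth=rho-finite}, and the forward direction of (b) is \cref{end-preserving_slide} combined with \cref{topography:trichotomy}\labelcref{topography:trichotomy:amenable}.

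The converses are organized differently. The paper proves the converse of (a) by rerunning the argument of \cite[Proposition 3.3]{T-TD:mcp_Adams} with $F$-classes in place of $E$-classes, which does not appeal to the amenability of $F$. For the backward direction of (b), it observes that the ends nonvanishing along $[x]_F$ form an $F$-invariant Borel end selection, so \cref{end_selection} gives amenability; nowhere smoothness then follows from (a).

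You instead split $Y$ into $Y\setminus\Xns_F$ and $Y\cap\Xns_F$. You show that the first piece has no nonvanishing ends along classes and the second has one or two, and you read off both equivalences from the fact that these counts exclude each other. Amenability in (b) is just the standing hypothesis.

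Your route is more economical. It uses only results already in the paper, and it makes the nowhere-smoothness half of the backward direction of (b) explicit. The price is that your converse of (a) depends essentially on the amenability of $F$, through the finite-to-one retraction machinery. The paper's (a) is a relative smoothness criterion for arbitrary subrelations, and its backward direction of (b) works as an amenability criterion that would survive dropping the standing hypothesis.

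One small correction in your smooth piece: isolatedness of vertices is not the relevant fact. What you need is that any finite set of vertices can be separated from $\eta$ by a half-space neighborhood. Applied to the finite set $\{y\in[x]_F:\rho^x(y)\ge\epsilon\}$, this gives the vanishing along $[x]_F$ directly, without passing to sequences.
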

\begin{proof}
\labelcref{relative_trichotomy:smooth} The forward implication follows immediately from \cref{smooth=rho-finite}.
For the converse, we repeat the same argument as in \cite[Proposition 3.3]{T-TD:mcp_Adams} for $F$, taking $C$ to vary over $F$-classes instead of $E$-classes, and replacing ``(non)vanishing'' with ``(non)vanishing along $C$.''

\labelcref{relative_trichotomy:amenable} The forward direction follows from \cref{end-preserving_slide} and \cref{topography:trichotomy}\labelcref{topography:trichotomy:amenable}.
For the backward direction, note that mapping $\mu_Y$-a.e.\ $x \in Y$ to the set $\EC_y$ of ends of $T$ which are nonvanishing along $[x]_F$ gives an $F$-invariant Borel end selection, witnessing the amenability of $F$ by \cref{end_selection}.
\end{proof}

\begin{remark}
After circulating an earlier draft of this article, it was pointed out to us by R.~Chen that in the proof of \cref{relative_trichotomy-1}, a simpler statement \cite[Proposition 2.5]{Chen-Kechris} can be used instead of the machinery of finite-to-one retractions (\cref{end-preserving_slide}).
However, \cref{end-preserving_slide} is needed below in \cref{parabolic_description,loxodromic_description}.
\end{remark}

\subsection{Parabolic and loxodromic structure}

Below we describe the parabolic and loxodromic parts of amenable subrelations in \cref{parabolic_description}  and \cref{loxodromic_description}, respectively.

\begin{theorem}[Parabolic structure]\label{parabolic_description}
Let $E$ be an mcp countable Borel equivalence relation on $(X,\mu)$ equipped with a treeing $T$, and let $\rho$ be the associated Radon--Nikodym cocycle. Assume that $E$ is $\mu$-nowhere smooth and let $F$ be an amenable subrelation of $E$. Then:

\begin{enumerate}[(i)]

\item \label{item:par:one_nonvanishing_end} For a.e.\ $x \in X^{\mathrm{par}}_F$, the end $\xi _F(x)$ is the only end of $T\rest{[x]_E}$ which is nonvanishing along $[x]_F$.
\end{enumerate}

\noindent Furthermore, there exists an acyclic countable-to-one Borel function $f : X^{\mathrm{par}}_F \to X^{\mathrm{par}}_F$ generating $F\rest{X^{\mathrm{par}}_F}$ satisfying the following:

\begin{enumerate}[resume*]
\item \label{item:par:end-preserving} The identity map on $[x]_F$ extends to a homeomorphism $\iota_{[x]_F} : \-{[x]_F}^T \to \-{[x]_F}^{T_f}$ for a.e.\ $x \in X^{\mathrm{par}}_F$.

\item\label{item:par:forward-end} $f^n(x)$ converges in $\-{[x]_F}^T$ to $\xi_F(x)$ for a.e.\ $x \in X^{\mathrm{par}}_F$.

\item\label{item:par:ess-1-ended} The unique maximal $F$-invariant Borel set $X_1\subseteq X^{\mathrm{par}}_F$ on which $T_f$ is essentially one-ended coincides a.e.\ with the set of all $x \in X^{\mathrm{par}}_F$ for which the back $f$-orbit of every point $y \in [x]_F$ is $\rho$-finite.

\item \label{item:par:nowhere-finitely-ended} The restriction of $T_f$ to $X^{\mathrm{par}}_F \setminus X_1$ is $\mu$-nowhere essentially finitely-ended, and for a.e.\ $x\in \Xpar_F\setminus X_1$ we have
\[
\limsup_{\substack{y\rightarrow \xi _F(x) \\ y\in [x]_F }}\rho ^x(y) = \limsup_{n \to \infty} \rho^x(f^n(x)) = \infty
\ \text{ and } \ 
\liminf_{\substack{y\rightarrow \xi _F(x) \\ y\in [x]_F }}\rho ^x(y) = 0.
\]
In addition, for every $\eta_f \in \del_{T_f} [x]_F$ distinct from the $\iota_{[x]_F}$-image of $\xi _F(x)$, the geodesic ray $[x,\eta_f)_{T_f}$, from $x$ to $\eta_f$ in $T_f$, is $\rho$-finite.
\end{enumerate}
\end{theorem}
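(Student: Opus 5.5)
We build $f$ from the one-end retraction of \cref{retraction_via_1-end_selection} and read off (i)--(v) through \cref{end-preserving_slide} and \cref{topography:trichotomy}.

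\emph{Construction of $f$.} On $\Xpar_F$ the maximal selection $x\mapsto\xi_F(x)$ is an $F$-invariant Borel one-end selection. \cref{retraction_via_1-end_selection} gives, after discarding an $F$-invariant smooth (hence, since $\Xpar_F\subseteq\Xns_F$, null) set, a finite-to-one bundle $(g_x)$ with each fiber $g_x^{-1}(z)$ a finite initial segment of $[z,\xi_F(x))_T$. For $z\in[x]_F$ define $f(z)\defeq g_x(w)$, where $w$ is the first vertex of $[z,\xi_F(x))_T$ outside $g_x^{-1}(z)$. Then:
\begin{itemize}
\item $f$ is Borel and countable-to-one;
\item $f$ is acyclic, since iterates move strictly toward $\xi_F(x)$ along nested rays;
\item the graph $T_f$ is exactly the retract $g_*T$, because the only retract edges leaving the fiber of $z$ toward $\xi_F(x)$ go to $f(z)$.
\end{itemize}
Consequently (ii) follows from \cref{finite-to-one_retract} and \cref{retraction=>end-embedding}. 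Item (iii) holds because $\bigcup_n g^{-1}(f^n(x))$ exhausts the ray $[x,\xi_F(x))_T$, so $f^n(x)\to\xi_F(x)$ in the vertex-removal topology. Under $\iota$, the forward $f$-end corresponds to $\xi_F(x)$.

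\emph{Item (i).} By \cref{end-preserving_slide}, the ends of $T$ nonvanishing along $[x]_F$ correspond bijectively to the $\rho$-nonvanishing ends of $T_f\rest{[x]_F}$. By \cref{relative_trichotomy-1}(b) there are one or two of these. They form an $F$-invariant Borel end selection, so by maximality in \cref{end_selection} they are contained in $\EC_F(x)=\{\xi_F(x)\}$. Hence $\xi_F(x)$ is the unique such end.

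\emph{Items (iv) and (v).} Everything is now phrased inside $T_f$, whose only nonvanishing end is the forward end.

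For (iv), suppose every back orbit $f^{-\N}(y)$ is $\rho$-finite. We want the $T_f$-convex set $Y$ of points whose back orbit is infinite to be an $F$-complete section on which $T_f$ is one-ended. If $T_f\rest Y$ had a second end, the union of the back orbits along its ray would need to be $\rho$-infinite, which we rule out by a mass-transport/cocycle count. I expect this to be the main obstacle, and would first try to get it from the proof of \cite[Proposition 3.3]{T-TD:mcp_Adams} applied to $T_f$.

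Conversely, essential one-endedness of $T_f$ means $X_1$ meets each class in a $T_f$-convex one-ended set $Y$. The back orbit of $y$ then lies in $Y$ together with finite hanging trees. It is $\rho$-finite because otherwise, by \cref{smooth=rho-finite} applied to the subrelation generated by back orbits, $F$ would be smooth there.

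For (v), off $X_1$ some back orbit $B=f^{-\N}(y)$ is $\rho$-infinite. Every back ray, i.e.\ every $\eta_f$ other than the forward end, is vanishing. If the ray $[x,\eta_f)$ had infinite $\rho$-weight, the ends with infinite-weight rays would give an invariant Borel selection of extra ends, contradicting (i) via \cref{end_selection}; so these rays are $\rho$-finite. Since $B$ is infinite-weight but all its rays are finite-weight, $B$ is nowhere finitely ended, which yields the claim that $T_f$ is nowhere essentially finitely-ended. For the limit statements:
\begin{itemize}
\item $\liminf=0$ along $\xi_F(x)$ comes from the infinitely many branches accumulating there, all of which are vanishing.
\item $\limsup_n\rho^x(f^n(x))=\infty$: otherwise boundedness together with $\rho$-finiteness of back branches would make the $F$-class $\rho$-finite, contradicting \cref{smooth=rho-finite}. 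I would argue this via the cocycle identity $\rho^x(B_{f^n(x)})\ge\rho^x(f^n(x))$, with the weight normalized at the fiber base.
\end{itemize}
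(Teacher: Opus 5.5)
Your construction of $f$ and your arguments for (ii) and (iii) are essentially the paper's. There, $f$ is the successor map toward $\xi_F$ in the finite-to-one retract of \cref{end-preserving_slide}. Your proof of (i) from \cref{relative_trichotomy-1}(b) plus maximality of $\EC_F$ is correct, and more self-contained than the paper's appeal to \cite[Proposition 7.5]{T-TD:mcp_Adams}.

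The gaps are in (iv) and (v), which the paper imports from \cite[Theorem 5.3]{T-TD:mcp_Adams} and \cite[Proposition 7.5]{T-TD:mcp_Adams}; your sketch does not replace them.

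In (iv), the section you propose cannot work. Every vertex of a back ray (a sequence with $f(y_{k+1})=y_k$) has infinite back orbit, so your $Y$ contains every back ray, and $T_f|_Y$ has exactly the same ends as $T_f$. Moreover, $\rho$-finite back orbits do not exclude back ends. Take a pmp one-ended $f_0$ on $(X_0,\mu_0)$ with finite back orbits. Let $X=X_0\times\N$ carry the measure $\sum_n 2^{-n-1}\mu_0\times\delta_n$, and set $f(x,0)=(f_0(x),0)$ and $f(x,n+1)=(x,n)$. Then all back orbits are $\rho$-finite and $X_0\times\{0\}$ witnesses essential one-endedness. Yet your $Y$ is all of $X$, with infinitely many ends in each component. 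The witnessing section has to be chosen using the cocycle weights, which is the actual content of \cite[Theorem 5.3]{T-TD:mcp_Adams}.

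Your converse direction is also too quick. In the mcp setting the pieces hanging off the one-ended section are only $\rho$-finite (by mass transport), not finite. The phrase ``\cref{smooth=rho-finite} applied to the subrelation generated by back orbits'' must be made precise, e.g.\ via the smooth relation whose classes are the back orbits of the minimal points having $\rho$-infinite back orbit.

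In (v), neither quantitative claim is established.
\begin{itemize}
\item For the $\rho$-finiteness of rays toward back ends, maximality of $\EC_F$ only excludes a \emph{finite} nonempty set of back ends with $\rho$-infinite rays. Nothing you say prevents this set from being infinite (or uncountable), in which case it is not an end selection and \cref{end_selection} says nothing. Vanishing of an end does not by itself make its ray $\rho$-finite.
\item For $\limsup_n\rho^x(f^n(x))=\infty$, your argument rests on the implication that bounded $\rho^x(f^n(x))$ together with $\rho$-finite back branches forces a $\rho$-finite class. This fails already for a pmp one-ended $f$, where $\rho\equiv 1$, back orbits are finite, and classes are infinite. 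Also, off $X_1$ some back orbits are $\rho$-infinite anyway, so no contradiction with \cref{smooth=rho-finite} is available, and the inequality $\rho^x(B_{f^n(x)})\ge\rho^x(f^n(x))$ carries no information.
\end{itemize}
You also derive ``some back orbit is $\rho$-infinite off $X_1$'' from the unproved direction of (iv). These are genuinely mcp phenomena requiring the analysis behind \cite[Proposition 7.5]{T-TD:mcp_Adams}.

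The nowhere essentially finitely-ended claim does not follow from your weight heuristic. Argue as the paper does: an essentially finitely-ended piece yields an $F$-invariant finite end selection, which by maximality is $\{\xi_F\}$, so that piece is essentially one-ended and lies in $X_1$. Your $\liminf=0$ argument is fine once you check that, off $X_1$, back ends occur in side branches arbitrarily far along the forward orbit.
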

\begin{proof}
Let $T_F$ be the treeing of $F$ given by \cref{end-preserving_slide}, so \labelcref{item:par:end-preserving} holds with $T_F$ in place of $T_f$. 
Let $f:X^{\mathrm{par}}_F\to X^{\mathrm{par}}_F$ be the function mapping each $x \in X^{\mathrm{par}}_F$ to its successor on the geodesic ray through $T_F$ from $x$ to the $\iota_{[x]_F}$-image of $\xi _F(x)$. 
Then $T_f = T_F$, hence \labelcref{item:par:end-preserving} and  \labelcref{item:par:forward-end} both hold.
Part \labelcref{item:par:ess-1-ended} follows from \cite[Theorem 5.3]{T-TD:mcp_Adams}. 
Part \labelcref{item:par:end-preserving} implies that $F\rest{X^{\mathrm{par}}_F}$ is parabolic in $T_f$ and hence $T_f \rest{X^{\mathrm{par}}_F \setminus X_1}$ is $\mu$-nowhere essentially finitely-ended. 
Part \labelcref{item:par:one_nonvanishing_end} and the rest of part \labelcref{item:par:nowhere-finitely-ended} are given by \cite[Proposition 7.5]{T-TD:mcp_Adams}.
\end{proof}

\begin{theorem}[Loxodromic structure]\label{loxodromic_description}
Let $E$ be an mcp countable Borel equivalence relation on $(X,\mu)$ equipped with a treeing $T$, and let $\rho$ be the associated Radon--Nikodym cocycle. Assume that $E$ is $\mu$-nowhere smooth and let $F$ be an amenable subequivalence relation of $E$. Then:

\begin{enumerate}[(i)]
\item \label{item:lox:two_nonvanishing_ends} For a.e.\ $x \in \Xlox_F$, the two ends in $\EC _F(x)$ are the only ends of $T\rest{[x]_E}$ which are nonvanishing along $[x]_F$.
\end{enumerate}

\noindent Furthermore, there is an essentially two-ended treeing $T_F$ of $F\rest{\Xlox_F}$ satisfying the following:

\begin{enumerate}[resume*]
\item \label{item:lox:end-preserving} The identity map on $[x]_F$ extends to a homeomorphism $\iota_{[x]_F} : \-{[x]_F}^T \to \-{[x]_F}^{T_F}$ for a.e.\ $x \in \Xlox_F$.

\item\label{item:lox:the-two-ends} There is a unique (up to a null set) $T_F$-convex Borel $F\rest{\Xlox_F}$-complete section $L$ such that $T_F \rest{L}$ is two-regular. For a.e.\ $x \in L$, the two ends of the $T_F \rest{L}$-connected component of $x$ are exactly the $\iota_{[x]_F}$-images of the two ends in $\EC_F(x)$.

\item \label{item:lox:cocycle_behavior} For a.e.\ $x \in \Xlox_F$, letting $\xi,\eta \in \EC_F(x)$ be the two distinct ends of $T$ selected by $x$ and letting $\xi_F,\eta_F$ be their $\iota_{[x]_F}$-images, we have
\begin{align*}
\liminf_{\substack{y\rightarrow \xi \\ y \in [x]_F}} \rho^x(y) = \liminf_{\substack{y\rightarrow \eta \\ y \in [x]_F}}\rho^x(y) < \infty
    \ &\text{ and } \ 
\limsup_{\substack{y\rightarrow \xi \\ y \in [x]_F}} \rho ^x(y) = \limsup _{\substack{y\rightarrow \eta \\ y \in [x]_F}}\rho ^x(y) > 0,
\\
\liminf_{y \in [x,\xi_F)_{T_F}} \rho^x(y) = \liminf_{y \in [x,\eta_F)_{T_F}} \rho^x(y) < \infty
    \ &\text{ and } \ \limsup_{y \in [x,\xi_F)_{T_F}} \rho ^x(y) = \limsup_{y \in [x,\eta_F)_{T_F}} \rho^x(y) > 0.
\end{align*}
In addition, if we let $X'$ denote the set of all $x \in \Xlox_F$ for which one of these limits inferior is positive or one of these limits superior is finite, then the restriction of $\rho$ to $F \rest{X'}$ is a coboundary.
\end{enumerate}
\end{theorem}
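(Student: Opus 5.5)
The plan follows the parabolic case. First I will apply \cref{end-preserving_slide} to $F\rest{\Xlox_F}$; the set $\Xlox_F$ is $F$-invariant and $F$ is nowhere smooth there. This gives a treeing $T_F$ of $F\rest{\Xlox_F}$ such that, for a.e.\ $x$, the identity on $[x]_F$ extends to a homeomorphism between the closures in the vertex-removal topology. Since $\cl{v}{T}([x]_F)$ and the end completion of $T$ restricted to $\Conv_T[x]_F$ carry the same ends, this is (ii). The same theorem says this homeomorphism matches the $\rho$-nonvanishing ends of $T_F\rest{[x]_F}$ with the ends of $T$ that are nonvanishing along $[x]_F$. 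So (i) reduces to an intrinsic statement about the treeing $T_F$ of the amenable, nowhere smooth relation $F\rest{\Xlox_F}$.

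Next I will show that $F\rest{\Xlox_F}$ is loxodromic in $T_F$. The pushforward $x\mapsto\iota_{[x]_F}(\EC_F(x))$ is an $F$-invariant Borel $T_F$-end selection with two ends. Conversely, any $F$-invariant selection for $T_F$ pulls back through $\iota^{-1}$ to one for $T$. Hence maximality of $\EC_F$ in \cref{end_selection}(b) transfers, and the maximal $T_F$-selection is exactly the image of $\EC_F$. In particular every $T_F$-component a.e.\ has the line $\ell_x$ joining these two ends, and it is $F$-invariant. Set $L\defeq\bigcup_x \ell_x$. Then $L$ is $T_F$-convex and $T_F\rest{L}$ is $2$-regular. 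By \cref{chosen_ends_are_spanned}, $L$ meets every class a.e., so it is a complete section. Uniqueness up to null sets follows because any other such line-section gives an $F$-invariant two-end selection, which must equal $\EC_F$ by maximality and (a) of \cref{end_selection}. This proves (iii).

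Essential two-endedness of $T_F$ is immediate from $L$. It also implies that the only ends of $T_F$ are governed by the line: every other end lies in a branch hanging off $\ell_x$. For (i) and (iv) I will invoke the loxodromic analysis in the companion paper \cite{T-TD:mcp_Adams}, the analogue of \cite[Proposition 7.5]{T-TD:mcp_Adams} used in the parabolic case. Its content is the following. For an essentially two-ended treeing of an amenable, nowhere smooth mcp relation, the off-line ends are vanishing. This rests on the argument that the branches hanging off $L$ are $\rho$-finite, which comes from \cref{smooth=rho-finite}: the branches are attached via a finite-to-one retraction to $L$. Moreover, the two line-ends have equal liminf and equal limsup of $\rho$ along the line. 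If these were unequal, one could produce a $\rho$-finite tail or a smooth invariant piece, contradicting nowhere smoothness. The limsup is positive by \cref{relative_trichotomy-1}(b), and the liminf is finite by the symmetric argument applied to the inverse cocycle direction.

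Transporting these statements through $\iota$ gives both lines of displayed limits. For the limits taken along $[x]_F$ toward $\xi$, I will compare them with the limits along $[x,\xi_F)_{T_F}$. Points of $[x]_F$ near $\xi$ lie in branches off the line, and on those branches $\rho$ is controlled by the retraction. For the coboundary claim on $X'$, suppose a liminf is positive or a limsup is finite. Then $\rho$ is bounded above and below along $\ell_x$ up to a multiplicative constant, and I will define $m(y)\defeq\limsup_{z\to\xi,\,z\in\ell_x}\rho^y(z)$. This is $F$-invariantly defined, finite and positive. The cocycle identity then gives $\rho(y,x)=m(x)/m(y)$, after inverting the convention as needed. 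The main obstacle is the equality of the limits at the two ends together with the comparison between limits along $[x]_F$ and along the line. That is where I expect to rely most heavily on the companion paper's two-ended analysis.
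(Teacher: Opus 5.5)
Your treatment of (ii), (iii), essential two-endedness, and the half of (i) saying that ends outside $\EC_F(x)$ vanish along $[x]_F$ is fine. For that half the paper argues more directly: the ends nonvanishing along $[x]_F$ form an $F$-invariant Borel end selection by \cref{relative_trichotomy-1}, so they lie in $\EC_F(x)$ by maximality. Note also that the nearest-point projection onto $L$ is not finite-to-one; the branches are $\rho$-finite because the relation of having the same projection to $L$ is smooth.

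The genuine gap is in (iv), on which the other half of (i) depends. The two-ended analysis of \cite[Proposition 7.4]{T-TD:mcp_Adams} is for essentially two-ended \emph{functions}, whose lines come with a Borel orientation toward the forward end. Your $T_F$ is only an essentially two-ended treeing. In general one cannot choose one of the two ends of $\ell_x$ in an $F$-invariant Borel way, since elements of $F$ may swap them. So $T_F$ need not be of the form $T_f$, and the citation does not apply.

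The missing idea is the paper's end-disentanglement. Take $\tilde X\defeq\{(x,\xi): x\in\Xlox_F,\ \xi\in\iota_{[x]_F}(\EC_F(x))\}$ with $\tilde\mu(A)\defeq\frac12\int|A_x|\,d\mu$. Set $\tilde f(x,\xi)\defeq(y,\xi)$, where $y$ is the successor of $x$ on the $T_F$-ray toward $\xi$. Then:
\begin{itemize}
\item each $T_{\tilde f}$-component is a copy of $T_F\rest{[x]_F}$ with the orientation built in;
\item $\tilde f$ restricts to a bijection of $\{(x,\xi)\in\tilde X: x\in L\}$, so it is essentially two-ended;
\item its cocycle is $\tilde\rho((x,\xi),(y,\xi))=\rho(x,y)$.
\end{itemize}
The function result applied to $\tilde f$, together with (ii), gives all of (iv).

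Your sketched substitutes do not close the gap. If you want to avoid $\tilde f$, you must actually redo the companion proof for treeings.
\begin{itemize}
\item Equality of the limits at the two ends cannot come from ``otherwise a smooth invariant piece.'' Consider a line along which $\rho$ decays geometrically toward one end and grows geometrically toward the other. It is $\rho$-infinite, and its cocycle data singles out no finite set. So neither \cref{smooth=rho-finite} nor a selector argument excludes it; that needs a recurrence argument using finiteness of $\mu$.
\item You cannot pass from limits along $[x,\xi_F)_{T_F}$ to limits along $[x]_F$ by ``controlling the branches.'' The branches off $\ell_x$ are $\rho$-finite, but their $\rho$-values are not bounded in terms of the value at the attaching vertex, so class-limits and ray-limits can differ. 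Only the inequalities transfer for free: class $\limsup\ge$ ray $\limsup>0$, and class $\liminf\le$ ray $\liminf<\infty$. The equalities between $\xi$ and $\eta$ for the class-limits need their own argument, which the paper again gets from $\tilde f$.
\item In the coboundary step, a positive $\liminf$ does not make $\rho$ bounded above on $\ell_x$. Define $m$ from whichever of the $\liminf$ or $\limsup$ lies in $(0,\infty)$; this is an $F$-invariant choice on $X'$.
\end{itemize}
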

\begin{proof}
Let $T_F$ be the treeing of $F\rest{\Xlox_F}$ given by \cref{end-preserving_slide}, so \labelcref{item:lox:end-preserving} holds. The existence of a set $L$ as in \labelcref{item:lox:the-two-ends} follows from the definition of being essentially two-ended, and the uniqueness follows from the definition of $\Xlox_F$. Part \labelcref{item:lox:end-preserving} implies that $F\rest{\Xlox_F}$ is loxodromic in $T_F$, which implies the rest of part \labelcref{item:lox:the-two-ends}. 

Towards part \labelcref{item:lox:two_nonvanishing_ends}, for a.e.\ $x \in \Xlox_F$, by \cref{relative_trichotomy-1}\labelcref{relative_trichotomy:amenable}, there are exactly one or two ends of $T$ which are nonvanishing along $[x]_F$, so the $\mu$-essential maximality of $\EC_F$ implies all ends not in $\EC_F(x)$ are vanishing along $[x]_F$.
That both ends in $\EC_F(x)$ are nonvanishing along $[x]_F$ will follow once part \labelcref{item:lox:cocycle_behavior} is established. 

Part \labelcref{item:lox:cocycle_behavior} would follow from part \labelcref{item:lox:end-preserving} and \cite[Proposition 7.4]{T-TD:mcp_Adams} if $T_F$ were generated by a function, which may not be the case. 
A minor modification of the proof of \cite[Proposition 7.4]{T-TD:mcp_Adams} applied to $T_F$ could be used to obtain \labelcref{item:lox:cocycle_behavior}, but instead we will establish \labelcref{item:lox:cocycle_behavior} by directly using \cite[Proposition 7.4]{T-TD:mcp_Adams} together with an end-disentanglement argument.

Consider the space
\[
\tilde{X} \defeq \{(x,\xi) \, : \, x \in \Xlox_F \text{ and } \xi \in \iota_{[x]_F}(\EC_F(x))\} ,
\]
equipped with the sigma algebra coming from its identification with a Borel subset of $\Rays(T_F)$,
and with the finite Borel measure $\tilde{\mu}$ defined by $\tilde{\mu}(A) \defeq \frac{1}{2} \int _{\Xlox_F} |A_x| \, d\mu$. 
Let $\tilde{f} : \tilde{X} \to \tilde{X}$ send $(x,\xi)$ to $(y,\xi)$, where $y$ is the successor of $x$ on the geodesic ray through $T_F$ from $x$ to $\xi$. 
Note that $\tilde{L} \defeq \set{(x,\xi) \in \tilde{X} : x \in L}$ is a $T_{\tilde{f}}$-convex Borel $E_{\tilde{f}}$-complete section and $\tilde{f}$ restricts to a bijection of $\tilde{L}$ with itself, so $\tilde{f}$ is essentially two-ended.
Next, observe that the Radon--Nikodym cocycle $\tilde{\rho}$, of $E_{\tilde{f}}$ with respect to $\tilde{\mu}$, is given by $\tilde{\rho}((x,\xi), (y, \xi)) = \rho(x,y)$, so applying \cite[Proposition 7.4]{T-TD:mcp_Adams} to $\tilde{f}$ indeed gives \labelcref{item:lox:cocycle_behavior}.
\end{proof}

The following corollaries are immediate consequences of \cref{parabolic_description} and \cref{loxodromic_description}.

\begin{cor}[Topography selects the ends]\label{topography_selects_ends}
Let $E$ be an mcp countable Borel equivalence relation on $(X,\mu)$ equipped with a treeing $T$, and let $F$ be an amenable subrelation of $E$.
Then for a.e.\ $x \in \Xns_F$, the ends in $\EC_F(x)$ are precisely the ends of $T \rest{[x]_E}$ which are nonvanishing along $[x]_F$.

Moreover, if there is an $F$-invariant probability measure equivalent to $\mu$, then $\EC_F(x)$ are exactly the ends of $T \rest{\Conv_T ([x]_F)}$ for a.e.\ $x \in \Xns_F$.
\end{cor}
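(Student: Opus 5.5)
The first assertion follows by splitting $\Xns_F$ into its parabolic and loxodromic parts. By \cref{end_selection}, for a.e.\ $x\in\Xns_F$ we have $1\le|\EC_F(x)|\le 2$, so $\Xns_F=\Xpar_F\sqcup\Xlox_F$ up to a null set. On $\Xpar_F$, \cref{parabolic_description}\labelcref{item:par:one_nonvanishing_end} says that $\xi_F(x)$ is the only end of $T\rest{[x]_E}$ nonvanishing along $[x]_F$. This is exactly the claim, since $\EC_F(x)=\{\xi_F(x)\}$. On $\Xlox_F$, \cref{loxodromic_description}\labelcref{item:lox:two_nonvanishing_ends} gives the same conclusion for the two ends of $\EC_F(x)$. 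So the first paragraph is just a union of two previously established statements.

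For the ``moreover'' part, assume $\nu\sim\mu$ is an $F$-invariant probability measure. The plan is to reduce to the case where the cocycle is trivial along $F$ and then compare ends. Since $\nu$ is $F$-invariant, $\rho_\mu$ restricted to $F$ is a coboundary: $\rho_\mu(y,x)=m(y)/m(x)$ with $m=d\mu/d\nu$.

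By \cref{topography_selects_ends}, which is really the first part just proved, we need to show that for a.e.\ $x\in\Xns_F$ an end $\eta$ of $T$ is nonvanishing along $[x]_F$ if and only if $\eta\in\del_T(\Conv_T[x]_F)$. As noted in \cref{sec:retractions}, the latter is equivalent to $\eta$ lying in $\cl{v}{T}([x]_F)$. One direction is immediate: if $\eta$ is not an accumulation point of $[x]_F$, then by convention $\eta$ is vanishing along $[x]_F$. For the converse, I would argue directly from the end-preserving treeing. Let $T_F$ be the treeing from \cref{end-preserving_slide}; its ends correspond homeomorphically to $\del_T(\Conv_T[x]_F)$, so it suffices to show that every end of $T_F\rest{[x]_F}$ is $\rho$-nonvanishing.

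Now $F$ is pmp for $\nu$, and the cocycle of $\nu$ on $F$ is $\equiv1$. Nonvanishing for $\rho_\mu$ versus $\rho_\nu\equiv1$ differs only by the factor $m$. Every end is nonvanishing for the trivial cocycle, so the issue is whether $m(y)\to0$ along some end. This is the main obstacle: we must rule out, for a.e.\ $x$, an end $\eta$ of $T_F$ along which $m(y)\to 0$.

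For that step I would avoid this pointwise analysis and use the already proved structure theorems instead. In the parabolic case, if $X_1$ (the essentially one-ended part) failed to be conull, \cref{parabolic_description}\labelcref{item:par:nowhere-finitely-ended} would force $\limsup\rho^x(f^n x)=\infty$ and $\liminf=0$. But there is an $F$-invariant finite measure in the class of $\mu$, so $T_f$ is a treeing of a pmp amenable relation. A pmp amenable treeing is essentially one- or two-ended by the classical Adams theorem, and the parabolic case then forces essential one-endedness. Thus in this case $\Conv_T[x]_F$ has exactly one end, which equals $\xi_F(x)$ by \cref{chosen_ends_are_spanned}.

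In the loxodromic case, the pmp Adams theorem similarly forces $T_F$ to be essentially two-ended with no extra ends. More precisely, extra ends off $L$ would give infinitely many ends, hence a nonamenable treeing by \cref{topography:trichotomy}, because all ends are nonvanishing for the trivial cocycle. So $\del_{T_F}[x]_F$ consists exactly of the two ends of $L$, which are the images of $\EC_F(x)$ by \cref{loxodromic_description}\labelcref{item:lox:the-two-ends}. Transporting back through $\iota_{[x]_F}$ gives $\del_T(\Conv_T[x]_F)=\EC_F(x)$. The key tool throughout is therefore \cref{topography:trichotomy} applied to $T_F$ with the cocycle of $\nu$, which identifies all ends of $T_F$ with nonvanishing ends.
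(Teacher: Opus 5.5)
Your proposal is correct, and your first part is exactly the paper's argument. For the ``moreover'' part your route differs from the paper's. The paper combines \cref{end-preserving_slide} with \cite[Theorem 9.8]{T-TD:mcp_Adams}, an invariant-measure result from the companion paper applied to $T_F$. In other words, it settles by citation the obstacle you isolated: ends of $T_F$ along which $m=d\mu/d\nu$ tends to $0$. You sidestep that obstacle instead. Applying part (b) of \cref{topography:trichotomy} to $T_F$ with the trivial cocycle of $\nu$ shows that a.e.\ $T_F$-component has exactly one or two ends, so $|\del_T(\Conv_T[x]_F)|\le 2$. Combined with $\EC_F(x)\subseteq\del_T(\Conv_T[x]_F)$ from \cref{chosen_ends_are_spanned}, equality is automatic where $|\EC_F(x)|=2$. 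On the parabolic part, a second end on a non-null set would contradict maximality of $\EC_F$ (equivalently, parabolicity of $F$ in $T_f$). This costs a case split but uses nothing from the companion paper beyond the trichotomy. There are two wording slips to fix. In the parabolic paragraph, the contradiction comes from the ``nowhere essentially finitely-ended'' clause of part (v) of \cref{parabolic_description}, not from its $\limsup/\liminf$ clause. Also, ``essentially one-ended'' does not in general mean ``exactly one end''; what you actually use is the exact end count supplied by your closing sentence. In the loxodromic paragraph, a single extra end already gives three nonvanishing ends, contradicting the trichotomy, so ``infinitely many'' is unnecessary. Finally, there is an even shorter route. Since $\EC_F$ and $\Xns_F$ depend only on the measure class, you may apply the first part with $\nu$ in place of $\mu$. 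Because $\rho_\nu\equiv 1$ on $F$, an end is $\rho_\nu$-nonvanishing along $[x]_F$ if and only if it is an accumulation point of $[x]_F$, that is, if and only if it lies in $\del_T(\Conv_T[x]_F)$.
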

\begin{proof}
The main statement follows from \cref{parabolic_description}\labelcref{item:par:one_nonvanishing_end} and \cref{loxodromic_description}\labelcref{item:lox:two_nonvanishing_ends}.
The moreover statement follows from \cref{end-preserving_slide} and \cite[Theorem 9.8]{T-TD:mcp_Adams}.
\end{proof}

In contrast to the pmp setting, in the general mcp setting the $T$-convex hull of the $F$-classes may not be so closely related to the maximal $F$-invariant end selection.
We give an example where the $T$-convex hull of each $F$-class contains every end of the entire $T$-component of that class.

\begin{example}\label{example:full_convex_hull}
Let $\F_2 \actson (X,\mu)$ be a free pmp action of the free group $\F_2 \defeq \gen{a,b}$, so $E^X_{\F_2}$ is treed by the Schreier graphing $T_0$ of this action.
Let $E^X_a$ be the orbit equivalence relation of the action of the generator $a$, so it is a maximal amenable subrelation of $E^X_{\F_2}$ (any larger subrelation is treeable of cost strictly larger than $1$, hence is nonamenable).
Let $p : X \times \F_2 \to X$ be the projection onto $X$ and let $E$ and $F$ be the pull-backs of $E^X_{\F_2}$ and $E^X_a$ to $X \times \F_2$ via $p$.
We identify $X$ with $X \times \set{e_{\F_2}}$ and extend the treeing $T_0$ to the treeing $T$ of $E$ defined by adding an edge between $(x,\gamma)$ and $(\gamma^{-1} . x, e_{\F_2})$ for each $x \in X$ and $\gamma \in \F_2$.
It is now easy to observe that the $T$-convex hull of each $F$-class contains every end of the entire $T$-component of that class.
Although $T$ is locally infinite, a cosmetic modification of this example as in the proof of \cite[Proposition 3.10]{JKL} -- by additionally attaching an $\N$-indexed ray to each vertex of $T_0$ and sliding the edges of $T \setminus T_0$ up -- would yield a bounded degree treeing with the same property.
\end{example}

\begin{cor}[Permanence of end selection]\label{parabolic_in_parabolic}
Let $E$ be an mcp countable Borel equivalence relation on $(X,\mu)$ equipped with a treeing $T$, and let $\rho$ be the associated Radon--Nikodym cocycle. 
Let $F_0 \subseteq F_1$ be amenable subequivalence relations of $E$.
Then $\EC_{F_0}(x) =\EC_{F_1}(x)$ for a.e.\ $x\in X_{F_0}^{\mathrm{ns}}$.

Hence, after discarding a null set we have $X_{F_0}^{\mathrm{par}}\subseteq X_{F_1}^{\mathrm{par}}$ and $X_{F_0}^{\mathrm{lox}}\subseteq X_{F_1}^{\mathrm{lox}}$.
In particular, if $F_0$ is $\mu$-nowhere smooth and $F_1$ is parabolic in $T$, then $F_0$ is parabolic in $T$ as well.
\end{cor}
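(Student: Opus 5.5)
The plan is to derive the equality $\EC_{F_0}(x)=\EC_{F_1}(x)$ directly from \cref{topography_selects_ends}. That corollary says that for a.e.\ $x\in\Xns_{F_i}$, $\EC_{F_i}(x)$ is exactly the set of ends of $T\rest{[x]_E}$ that are nonvanishing along $[x]_{F_i}$. Since $F_0\subseteq F_1$, we have $\Xns_{F_0}\subseteq\Xns_{F_1}$ and $[x]_{F_0}\subseteq[x]_{F_1}$. So for a.e.\ $x\in\Xns_{F_0}$ both descriptions apply at $x$. Discarding the null set where they fail requires a little care: the exceptional sets are null, and since $E$ is mcp their $E$-saturations are null too.

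The first step is the inclusion $\EC_{F_0}(x)\subseteq\EC_{F_1}(x)$. It comes from monotonicity of the limsup in the set along which it is taken:
\[
\limsup_{\substack{y\to\eta\\ y\in[x]_{F_0}}}\rho^x(y)\le \limsup_{\substack{y\to\eta\\ y\in[x]_{F_1}}}\rho^x(y).
\]
With the convention that the limsup is $0$ at non-accumulation points, any end nonvanishing along $[x]_{F_0}$ is nonvanishing along $[x]_{F_1}$. Alternatively, one can use maximality in \cref{end_selection}(b): the map $\EC_{F_1}$ restricted to $\Xns_{F_0}$ is $F_0$-invariant, so $\EC_{F_1}\rest{\Xns_{F_0}}\subseteq\EC_{F_0}$ a.e. This gives the reverse inclusion for free.

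The main obstacle is the reverse inclusion $\EC_{F_1}(x)\subseteq\EC_{F_0}(x)$, i.e.\ showing an end selected by $F_1$ is also selected by $F_0$. The maximality argument just described handles it. By \cref{end_selection}(b), $\EC_{F_0}$ is the essentially unique maximum $F_0$-invariant Borel $T$-end selection on $\Xns_{F_0}$. Since $F_0\subseteq F_1$ and $\EC_{F_1}$ is $F_1$-invariant and Borel, $x\mapsto\EC_{F_1}(x)$ restricted to $\Xns_{F_0}$ is an $F_0$-invariant Borel end selection on that set. Maximality then gives $\EC_{F_1}(x)\subseteq\EC_{F_0}(x)$ a.e. Combined with the limsup monotonicity, this yields equality a.e.\ on $\Xns_{F_0}$. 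The only technicality is that $\Xns_{F_0}$ is $F_0$-invariant, so the restricted selection really is defined on an $F_0$-invariant set, as \cref{end_selection}(b) requires.

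The consequences are then immediate. For a.e.\ $x\in\Xpar_{F_0}$, $|\EC_{F_1}(x)|=|\EC_{F_0}(x)|=1$ and $x\in\Xns_{F_1}$, so $x\in\Xpar_{F_1}$. The loxodromic case is identical with cardinality $2$. Finally, suppose $F_0$ is $\mu$-nowhere smooth, so $\Xns_{F_0}$ is conull, and $F_1$ is parabolic. Then a.e.\ $x$ lies in $\Xpar_{F_1}\cap\Xns_{F_0}$, so $|\EC_{F_0}(x)|=1$ and $F_0$ is parabolic.
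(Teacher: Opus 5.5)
Your proof is correct and uses the same two ingredients as the paper. Maximality of $\EC_{F_0}$ from \cref{end_selection}(b) gives $\EC_{F_1}\subseteq\EC_{F_0}$ a.e.\ on $\Xns_{F_0}$, and \cref{topography_selects_ends} together with monotonicity of the limsup gives the reverse inclusion; the paper phrases this as first getting $\Xpar_{F_0}\subseteq\Xpar_{F_1}$ from maximality and then $\Xlox_{F_0}\subseteq\Xlox_{F_1}$ from the topography corollary. Your labeling is inverted, though: the maximality direction is the easy one, and the substantive step (your ``main obstacle'') is $\EC_{F_0}\subseteq\EC_{F_1}$, which genuinely requires \cref{topography_selects_ends}.
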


\begin{proof}
Since $\Xns_{F_0} \subseteq \Xns_{F_1}$, it follows immediately from the maximality of the end selection $\EC_{F_0}$ that $\Xpar_{F_0} \subseteq \Xpar_{F_1}$, so it is enough to show that the set $\Xlox_{F_0} \subseteq \Xlox_{F_1}$ after discarding a null set, but this follows from \cref{topography_selects_ends}.
\end{proof}

\subsection{Relative uniqueness and almost malnormality of maximal amenable subrelations}

\begin{theorem}[Relative uniqueness]\label{unique_max_amenable}
Let $E$ be an mcp treeable countable Borel equivalence relation on $(X,\mu)$ and let $F$ be an amenable subequivalence relation of $E$. 
Then $F \rest{\Xns_F}$ is contained in a ($\mu$-essentially) unique maximal amenable subequivalence relation $\hat{F}$ of $E \rest{\Xns_F}$. 
In particular, every $\mu$-nowhere smooth amenable subequivalence relation of $E$ is contained in a ($\mu$-essentially) unique maximal amenable subequivalence relation of $E$.

Moreover, given any treeing $T$ of $E$, we have that
\begin{equation}
\label{eq:maximal} \hat{F} = \set{ (x,y) \in E \rest{\Xns_F} : \EC_F(x) = \EC_F(y) }.
\end{equation}
\end{theorem}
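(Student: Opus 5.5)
Fix a treeing $T$ of $E$. On $\Xns_F$ we define $\hat F$ by \eqref{eq:maximal}. The plan is to show three things: $\hat F$ is a Borel equivalence relation containing $F\rest{\Xns_F}$, it is amenable, and every amenable $R\subseteq E\rest{\Xns_F}$ with $F\rest{\Xns_F}\subseteq R$ satisfies $R\subseteq\hat F$ a.e. Together these give both existence and essential uniqueness of the maximal amenable extension, independently of $T$; the independence from $T$ is automatic because maximality characterizes $\hat F$.

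First we extend $\EC_F$ from $\Xns_F$ to a map on $\Xns_F$ that is invariant under $F$, which it is by definition, and Borel. Then $\hat F$ is Borel, being the pullback of equality along a Borel map into $\Pow_{\mathrm{fin}}(\del_T X)$, where ends are coded by rays modulo tail equivalence. Hence $\hat F$ is an equivalence relation containing $F\rest{\Xns_F}$, and its classes are countable since $\hat F\subseteq E$.

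For amenability, the map $x\mapsto\EC_F(x)$ is an $\hat F$-invariant Borel $T$-end selection on $\Xns_F$ by construction. We apply \cref{end_selection}, relativized to the $E$-invariant set $[\Xns_F]_E$ on which $E$ is nowhere smooth, since $\Xns_F\subseteq\Xns_E$. One point needs care here: $\hat F$ lives on $\Xns_F$, which need not be $E$-invariant. We handle this by extending $\hat F$ trivially (singleton classes) to $[\Xns_F]_E$ and extending the end selection arbitrarily in a Borel way off $\Xns_F$. For example, we can take the end of $T$ toward which the lexicographically least path from a point to $\Xns_F$ points, and at singleton classes use any Borel choice of an end. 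We also restrict to the nowhere smooth part of $E$, discarding a null set where $E$ is smooth. The backward direction of \cref{end_selection} then gives amenability of $\hat F$.

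For maximality, let $R\supseteq F\rest{\Xns_F}$ be amenable in $E\rest{\Xns_F}$. Then $\Xns_F\subseteq\Xns_R$, and we apply \cref{parabolic_in_parabolic} with $F_0=F\rest{\Xns_F}$ and $F_1=R$, again extended trivially to $X$ if needed. This gives $\EC_R(x)=\EC_F(x)$ for a.e.\ $x\in\Xns_F$. Since $\EC_R$ is $R$-invariant, for $R$-related $x,y$ outside an $R$-invariant null set we get $\EC_F(x)=\EC_F(y)$, that is, $(x,y)\in\hat F$. Discarding the $E$-saturation of the exceptional null set is allowed because $E$ is mcp. This yields $R\subseteq\hat F$ a.e. Uniqueness follows: any maximal amenable extension $R$ satisfies $R\subseteq\hat F$, and $\hat F$ is amenable, so $R=\hat F$ a.e.

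For the \emph{in particular} statement, if $F$ is nowhere smooth then $\Xns_F$ is conull. The main obstacle is the bookkeeping involved in applying \cref{end_selection} and \cref{parabolic_in_parabolic} to relations living on the non-invariant set $\Xns_F$. The substantive content is already in \cref{parabolic_in_parabolic}, which comes from \cref{topography_selects_ends}.
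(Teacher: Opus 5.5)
Your proposal is correct and follows the paper's proof: define $\hat F$ by \eqref{eq:maximal}, obtain its amenability from the backward direction of \cref{end_selection}, and obtain maximality (hence essential uniqueness) by applying \cref{parabolic_in_parabolic} to $F\rest{\Xns_F}\subseteq R$. Your extra bookkeeping for the non-invariant set $\Xns_F$ is more careful than the paper's. One slip: a finite lexicographically least path does not determine an end of $T$. Your fallback still works, namely taking $\EC_F$ of the lexicographically nearest point of $\Xns_F$, or any Borel choice of end on the singleton classes after discarding a null set.
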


\begin{proof}
Fix a treeing $T$ of $E$ and let $\hat{F}$ be given by \eqref{eq:maximal}. Then $\hat{F}$ is an amenable subequivalence relation of $E \rest{\Xns_F}$ containing $F \rest{\Xns_F}$. If $\tilde{F}$ is another amenable subequivalence relation of $E \rest{\Xns_F}$ containing $F \rest{\Xns_F}$, then $\EC_F = \EC_{\tilde{F}}$ by \cref{parabolic_in_parabolic}, and hence $\tilde{F} \subseteq \hat{F}$.
\end{proof}

\begin{cor}\label{nowhere_smooth_intersection}
Let $E$ be an mcp treeable countable Borel equivalence relation on $(X,\mu)$.
Let $F_0$ and $F_1$ be amenable subequivalence relations of $E$, and let $Y \defeq \Xns_{F_0 \cap F_1}$. 
Then the equivalence relation generated by $F_0 \rest{Y}$ and $F_1 \rest{Y}$ together is still amenable. 
In particular, if $F_0 \cap F_1$ is $\mu$-nowhere smooth, then $F_0$ and $F_1$ together generate an amenable equivalence relation.
\end{cor}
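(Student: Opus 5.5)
Set $F \defeq F_0 \cap F_1$, which is a Borel subequivalence relation of both $F_0$ and $F_1$ and is amenable, being a subrelation of an amenable relation. By definition $Y = \Xns_F$, and $Y$ is $F$-invariant. I will apply \cref{unique_max_amenable} to $F$. This gives a ($\mu$-essentially) unique maximal amenable subequivalence relation $\hat F$ of $E\rest{Y}$ containing $F\rest{Y}$. Fixing a treeing $T$ of $E$, this relation is given by \eqref{eq:maximal}: $\hat F = \set{(x,y)\in E\rest{Y} : \EC_F(x)=\EC_F(y)}$.

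The key step is to show that $F_i\rest{Y} \subseteq \hat F$ for $i=0,1$, up to a null set. Since $F \subseteq F_i$ are both amenable, \cref{parabolic_in_parabolic} gives $\EC_F(x) = \EC_{F_i}(x)$ for a.e.\ $x\in \Xns_F = Y$. There is one technical point: $Y$ need not be $F_i$-invariant. However, the map $x\mapsto \EC_{F_i}(x)$ is $F_i$-invariant on $\Xns_{F_i}\supseteq Y$. Hence for $(x,y)\in F_i\rest{Y}$ we get $\EC_F(x)=\EC_{F_i}(x)=\EC_{F_i}(y)=\EC_F(y)$ whenever $x,y$ lie outside the exceptional null set. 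To make this hold for all pairs, I discard the $E$-saturation of that null set, which is still null because $E$ is mcp. Then $(x,y)\in\hat F$, so $F_i\rest{Y}\subseteq\hat F$.

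It follows that the equivalence relation generated by $F_0\rest{Y}$ and $F_1\rest{Y}$ is contained in $\hat F$. It is therefore a subrelation of an amenable relation, and so it is amenable. For the last sentence of the statement: if $F_0\cap F_1$ is $\mu$-nowhere smooth, then $Y$ is conull, and the generated relation agrees a.e.\ with the one generated by $F_0$ and $F_1$.

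I expect the only delicate point to be the one handled above, namely that $F_i$ does not preserve $Y$. This is resolved by using the $F_i$-invariance of $\EC_{F_i}$ on the larger set $\Xns_{F_i}$ together with the fact that, by the mcp assumption, $E$-saturations of null sets are null.
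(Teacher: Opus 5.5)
Your proof is correct and rests on the same key input as the paper's, namely \cref{unique_max_amenable} applied to $F_0\cap F_1$. The paper first restricts $E$ to $Y$, using that treeability passes to restrictions so one may assume $Y=X$. It then invokes only the abstract uniqueness clause to get $\hat F_0=\hat F_1$ for maximal amenable extensions of $F_0$ and $F_1$. You instead check $F_i\rest{Y}\subseteq\hat F$ directly via \eqref{eq:maximal} and \cref{parabolic_in_parabolic}, which keeps the original treeing of $E$ at the cost of handling the failure of $F_i$-invariance of $Y$ by hand, and you handle that correctly.
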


\begin{proof}
Because treeability is closed under restrictions, we may assume that $F_0 \cap F_1$ is $\mu$-nowhere smooth and hence $Y = X$. Let $\hat{F}_0$ and $\hat{F}_1$ be maximal amenable subequivalence relations of $E$ containing $F_0$ and $F_1$, respectively. Then $\hat{F}_0 = \hat{F}_1$ by the previous theorem applied to $F_0 \cap F_1$.
\end{proof}

\begin{cor}[Almost malnormality]\label{malnormality}
Let $E$ be an mcp treeable countable Borel equivalence relation on $(X,\mu)$ and let $F$ be a maximal amenable subequivalence relation of $E$.
Then for every $\phi \in [[E]]$, after discarding a null set, there is a partition of $\mathrm{dom}(\phi )$ into two Borel sets $A$ and $B$ such that $\phi\rest{A} \in [[F]]$ and $F\rest{\phi (B)}\cap (\phi \times \phi ) (F\rest{B})$ is smooth.
\end{cor}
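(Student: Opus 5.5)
We would prove \cref{malnormality} by fixing a treeing $T$ of $E$ and using the description \eqref{eq:maximal} of maximal amenable subrelations from \cref{unique_max_amenable}, applied to the relation $F$ itself. The idea is that on the part of $\dom(\phi)$ where $\phi$ carries the $F$-class of a point to an $F$-class selecting the same ends, $\phi$ already lies in $[[F]]$. On the remaining part, the conjugated intersection is forced to be smooth by the permanence of end selection (\cref{parabolic_in_parabolic}).

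First we reduce to $\Xns_F$. On $X\setminus\Xns_F$ the relation $F$ is smooth, so any Borel $B\subseteq\dom(\phi)\setminus\Xns_F$ makes $F\rest{\phi(B)}\cap(\phi\times\phi)(F\rest{B})$ a subrelation of the smooth relation $F\rest{\phi(B)}$, hence smooth. Since $F$ is maximal amenable and $F\rest{\Xns_F}$ is amenable and nowhere smooth, \cref{unique_max_amenable} shows that $F\rest{\Xns_F}$ coincides a.e.\ with $\set{(x,y)\in E\rest{\Xns_F}:\EC_F(x)=\EC_F(y)}$. Indeed, that relation is amenable and contains $F\rest{\Xns_F}$. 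Its union with $F\rest{X\setminus\Xns_F}$ is an amenable subrelation of $E$ containing $F$, so maximality gives equality.

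Next we define
\[
A\defeq\set{x\in\dom(\phi)\cap\Xns_F:\phi(x)\in\Xns_F\text{ and }\EC_F(\phi(x))=\EC_F(x)}
\]
and let $B$ be its complement in $\dom(\phi)$. By the previous paragraph, $(\phi(x),x)\in F$ for a.e.\ $x\in A$, so $\phi\rest{A}\in[[F]]$ after discarding a null set.

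For $B$, put $R\defeq F\rest{\phi(B)}\cap(\phi\times\phi)(F\rest{B})$. This is an amenable subrelation of $F$, and $\phi^{-1}$ carries it into $F$, so $R'\defeq(\phi^{-1}\times\phi^{-1})(R)\subseteq F\rest{B}$ is amenable as well. Suppose $R$ is not smooth, and let $Z\defeq\Xns_R$, which is then non-null. By \cref{parabolic_in_parabolic} applied to $R\subseteq F$, we get $\EC_R(y)=\EC_F(y)$ for a.e.\ $y\in Z$. Likewise, $\phi^{-1}(Z)=\Xns_{R'}$, because $\phi$ is nonsingular (mcp) and so preserves $\rho$-infiniteness of classes by \cref{smooth=rho-finite}. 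Applying \cref{parabolic_in_parabolic} to $R'\subseteq F$ gives $\EC_{R'}(x)=\EC_F(x)$ for a.e.\ $x\in\phi^{-1}(Z)$. Both of these points lie in $\Xns_F$, since $\Xns_R\subseteq\Xns_F$.

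The key step is to show $\EC_R(\phi(x))=\EC_{R'}(x)$. The $R$-class of $\phi(x)$ is the $\phi$-image of the $R'$-class of $x$. Both lie in the same $T$-component, and $\phi$ does not change that component. By \cref{topography_selects_ends}, the selected ends are exactly those nonvanishing along the class. The cocycle satisfies $\rho^{\phi(x)}(\phi(y))=\rho^{\phi(x)}(\phi(y),y)\,\rho(y,x)\,\rho(x,\phi(x))$, but the factor $\rho(\phi(y),y)$ need not be bounded, so this direct comparison does not work.

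Instead we use \cref{topography_selects_ends} together with the topological characterization from \cref{chosen_ends_are_spanned}. Composing $\EC_{R'}\circ\phi^{-1}$ gives an $R$-invariant Borel $T$-end selection on $Z$. One checks that it is a selection of ends in the closure of the class: the sequence $\phi(y_n)$ tends to the same end as $y_n$, because the $T$-distance $d_T(y,\phi(y))$ is constant along each piece of a countable Borel partition of $\dom(\phi)$. By essential maximality (\cref{end_selection}(b)) this selection is contained in $\EC_R$, and symmetrically in the other direction, so equality holds a.e. Hence $\EC_F(\phi(x))=\EC_F(x)$ for a.e.\ $x\in\phi^{-1}(Z)\subseteq B$, which places these points in $A$, a contradiction. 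Therefore $R$ is smooth.

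The main obstacle is exactly this transport of end selections along $\phi$. The point is that $\phi$, being countably many partial maps each moving points a bounded $T$-distance on pieces, preserves convergence to ends. Making this rigorous via the countable Borel partition, combined with the essential uniqueness in \cref{end_selection}(b), is the delicate part.
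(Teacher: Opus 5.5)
Your proof is correct, and it takes a genuinely different route from the paper's.

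\textbf{The paper's route.} The paper sets $F_0\defeq F\rest{\phi(D)}\cap(\phi\times\phi)(F\rest{D})$ and $A\defeq\phi^{-1}(\Xns_{F_0})$. With this choice, smoothness on $B$ holds by definition. On $A$, the paper uses \cref{unique_max_amenable}: both $F\rest{\phi(A)}$ and $(\phi\times\phi)(F\rest{A})$ are maximal amenable and contain the nowhere smooth $F_0$, so $\phi\rest{A}$ normalizes $F$. Maximality of $\EC_F$, applied to the selections $x\mapsto\EC_F(\phi(x))\cup\EC_F(x)$, then gives $\EC_F\circ\phi=\EC_F$, and \eqref{eq:maximal} finishes.

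\textbf{Your route.} You reverse the division of labour. You define $A$ by the condition $\EC_F(\phi(x))=\EC_F(x)$, so $\phi\rest{A}\in[[F]]$ is immediate from \eqref{eq:maximal}. Your reduction showing $F\rest{\Xns_F}=\hat F$ is fine; it needs only amenability of $\hat F$ and maximality of $F$. All the work then goes to $B$. There you apply \cref{parabolic_in_parabolic} to both $R\subseteq F$ and $R'\subseteq F$, and transport the maximal end selection along the isomorphism $\phi\colon(B,R')\to(\phi(B),R)$.

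\textbf{Comparison.} Your approach uses permanence of end selection directly. It avoids the normalization step and the uniqueness theorem, at the cost of a contradiction argument. Your partition also depends on the chosen treeing $T$. The paper's partition is intrinsic, and its $B$-clause is automatic.

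\textbf{One correction.} The step you single out as the main obstacle is not needed, and the justification you give for it is invalid.
\begin{itemize}
\item \cref{end_selection}(b) applies to \emph{any} $R$-invariant Borel $T$-end selection on a relatively conull subset of $\Xns_R$. There is no requirement that the selected ends lie in the closure of the class.
\item Since $\phi\in[[E]]$ preserves $T$-components, $\EC_{R'}\circ\phi^{-1}$ is such a selection on $\phi(\Xns_{R'})=\Xns_R$. Hence $\EC_{R'}(\phi^{-1}(y))\subseteq\EC_R(y)$ for a.e.\ $y\in\Xns_R$.
\item Symmetrically, $\EC_R\circ\phi$ is an $R'$-invariant selection on $\Xns_{R'}$. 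This gives the reverse inclusion, and so equality, with no topology involved.
\item The bounded-displacement argument is wrong as stated. A sequence $y_n\to\xi$ inside one class can pass through infinitely many pieces $\set{y: d_T(y,\phi(y))=k}$. Then $d_T(y_n,\phi(y_n))$ is unbounded, and $\phi(y_n)$ need not converge to $\xi$.
\end{itemize}
Delete that sentence; the remainder of your argument stands as written.
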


\begin{proof}
We may assume without loss of generality that $E$ is $\mu$-nowhere smooth, and therefore $F$ is $\mu$-nowhere smooth as well, by maximality.
Let $D \defeq \dom (\phi )$, let $F_0\defeq  F\rest{\phi (D)}\cap (\phi \times \phi ) (F\rest{D})$, let $A \defeq \phi ^{-1}(\Xns_{F_0})$, and let $B\defeq D\setminus A$.
Since $F\rest{\phi (A)}$ and $(\phi \times \phi ) (F\rest{A})$ are both maximal amenable subrelations of $E\rest{\phi (A)}$ containing the $\mu\rest{\phi (A)}$-nowhere smooth subrelation $F_0\rest{\phi (A)}$, we deduce from \cref{unique_max_amenable} that $(\phi \times \phi ) (F\rest{A})=F\rest{\phi (A)}$, i.e., $\phi\rest{A}$ normalizes $F$.

Fix a treeing $T$ of $E$. 
Since $\phi\rest{A}$ normalizes $F$, the assignments $x\mapsto \EC_{F}(\phi (x))\cup \EC_{F}(x)$ and $x\mapsto \EC_{F}(\phi ^{-1}(x))\cup \EC_F(x)$ are $F$-invariant Borel $T$-end selections on $A$ and $\phi (A)$ respectively. 
Maximality of $\mathcal{E}_F$ therefore implies that, after discarding a null set, for each $x\in A$ we have
\[
\EC_F(x)= \EC_F(\phi ^{-1}(\phi (x)))\subseteq \EC_F(\phi (x))\subseteq \EC_F(x),
\]
hence $\EC_F(\phi (x))=\EC_F(x)$.
Therefore $\phi\rest{A} \in [[F]]$ by $\mu$-nowhere smoothness and maximality of $F$ together with \labelcref{eq:maximal}.
\end{proof}

The results in this subsection are consistent with a positive answer to the following important open question, first communicated to the authors by Damien Gaboriau (in the pmp setting):

\begin{question}\label{q:max_amen_implies_free_factor?}
Let $E$ be a treeable mcp countable Borel equivalence relation.
Is every maximal amenable subequivalence relation of $E$ a free factor of $E$?
\end{question}

\subsection{Transversality of classes and subrelations}

\begin{theorem}[Transversality of classes]\label{transversality}
Let $E$ be an mcp countable Borel equivalence relation on $(X,\mu )$ that is $\mu$-nowhere smooth, and let $T$ be a treeing of $E$. 
Let $F$ be a maximal amenable subequivalence relation of $E$. 
Then, after discarding a null set, the sets $\EC _F (x) $ and $\EC _F (y)$ are disjoint for all $F$-inequivalent points $x,y\in X$.
\end{theorem}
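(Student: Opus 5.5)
The plan is to reduce the statement to \cref{unique_max_amenable} and \cref{parabolic_in_parabolic} via the ``end-disentanglement'' lift already used in the proof of \cref{loxodromic_description}. First, since $E$ is $\mu$-nowhere smooth and $F$ is maximal amenable, $F$ is $\mu$-nowhere smooth (as in the proof of \cref{malnormality}), so $\Xns_F$ is conull. By \eqref{eq:maximal} we then have, after discarding a null set, $F=\set{(x,y)\in E:\EC_F(x)=\EC_F(y)}$. Thus it suffices to show the following: for a.e.\ $x$, if $y\in[x]_E$ and $\EC_F(x)\cap\EC_F(y)\neq\emptyset$, then $\EC_F(x)=\EC_F(y)$.

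To do this, consider
\[
\tilde X\defeq\set{(x,\xi):x\in X,\ \xi\in\EC_F(x)},
\]
viewed as a Borel subset of $X\times\Rays(T)$ modulo tail equivalence, with the measure $\tilde\mu(A)\defeq\int_X|A_x|\,d\mu$ (normalized). The projection $p:\tilde X\to X$ is at most two-to-one. Define
\[
\tilde E\defeq\set{((x,\xi),(y,\eta)):(x,y)\in E},
\qquad
\tilde R\defeq\set{((x,\xi),(y,\xi))\in\tilde E},
\qquad
\tilde F\defeq\set{((x,\xi),(y,\xi))\in\tilde R:(x,y)\in F}.
\]
Then $\tilde E$ is mcp with cocycle $\tilde\rho((x,\xi),(y,\eta))=\rho(x,y)$, and it is treeable. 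To see treeability, lift $T$ by joining each fiber $p^{-1}(x)$ internally (at most one edge) and joining fibers along $T$-edges via a single chosen pair of points; this gives a treeing $\tilde T$ of $\tilde E$ whose ends correspond canonically to those of $T$. The relation $\tilde F$ is amenable, being a finite-index extension of a copy of $F$. Moreover, the maximal $\tilde F$-invariant end selection is $\EC_{\tilde F}(x,\xi)=\EC_F(x)$ under this correspondence, by maximality and the fact that $p$ is finite-to-one. Finally, $\tilde F$ is nowhere smooth.

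The key step, and the main obstacle, is to show that $\tilde R$ is amenable. I would first try to exhibit $(x,\xi)\mapsto\set{\xi}$ as an $\tilde R$-invariant Borel $\tilde T$-end selection and invoke \cref{end_selection} in $\tilde E$. This requires care because $\tilde E$ need not be nowhere smooth and the end correspondence must be Borel. Alternatively, I would use \cref{amenable-coamenable} and \cref{2-amen_implies_mu-amen}. Here $\tilde F$ is Borel $1$-amenable on a conull set, and the remaining task is Borel $1$-coamenability of $\tilde F$ in $\tilde R$. For that, the $\tilde R/\tilde F$-classes over $(x,\xi)$ are the $F$-classes $C\subseteq[x]_E$ with $\xi\in\EC_F(C)$. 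By \cref{chosen_ends_are_spanned}, $\xi\in\del_T\Conv_T C$ for each such $C$. The plan is then to average, over the $k$-th point $z_k$ of $[x,\xi)_T$, a Borel choice of classes whose retraction (as in \cref{retraction_via_1-end_selection}) is near $z_k$. Points $x,y$ sharing $\xi$ have rays $[x,\xi)_T,[y,\xi)_T$ that eventually agree, which yields the asymptotic invariance $\|p^k_x-p^k_y\|_1\to0$. The technical work is to make this choice finite and Borel.

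Granting amenability of $\tilde R$, note that $\tilde F\subseteq\tilde R$ are amenable subrelations of the treeable relation $\tilde E$ and $\tilde F$ is nowhere smooth. Hence \cref{parabolic_in_parabolic} gives $\EC_{\tilde F}=\EC_{\tilde R}$ a.e., so $\EC_{\tilde F}$ is $\tilde R$-invariant. Translating back, if $\xi\in\EC_F(x)\cap\EC_F(y)$ with $y\in[x]_E$, then $(x,\xi)\mathrel{\tilde R}(y,\xi)$, and therefore $\EC_F(x)=\EC_{\tilde F}(x,\xi)=\EC_{\tilde F}(y,\xi)=\EC_F(y)$. By \eqref{eq:maximal} this forces $x\mathrel{F}y$. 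The null sets discarded along the way project under the finite-to-one map $p$ to null sets, so their $E$-saturations are null by the mcp property, and discarding these completes the proof.
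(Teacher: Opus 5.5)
Your proposal is correct and follows the paper's end-disentanglement argument. Your $\tilde R$ is the paper's $\tilde Q$, and your first option for its amenability (the $\tilde R$-invariant selection $(x,\xi)\mapsto\set{\xi}$ together with \cref{end_selection}) is exactly what the paper does. No extra care is needed there: $\tilde E$ contains your $\tilde\mu$-nowhere smooth $\tilde F$, so it is nowhere smooth, and the end correspondence is Borel by lifting rays through a Borel choice $x\mapsto\xi_x\in\EC_F(x)$.

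The only difference is the finish. The paper pulls back all of $F$, observes that this pullback is maximal amenable in $\tilde E$, and uses \cref{nowhere_smooth_intersection} to place $\tilde Q$ inside it. You instead apply \cref{parabolic_in_parabolic} to $\tilde Q\cap(p\times p)^{-1}(F)\subseteq\tilde Q$ and descend via \eqref{eq:maximal}, which spares you checking that the lift is maximal amenable.
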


\begin{proof}
We use an end-disentanglement argument, as in the proof of \cref{loxodromic_description}. Consider the space 
\[
\tilde{X} = \{ (x,\xi ) : x\in X \text{ and }\xi \in \EC _F (x) \} ,
\]
equipped with the finite measure $\tilde{\mu}$ defined by $\tilde{\mu}(A) \defeq \int _X |A_x| \, d\mu$. 
Let $p:\tilde{X}\rightarrow X$ be the natural projection, and define $\tilde{E}\defeq (p\times p ) ^{-1}(E)$, and $\tilde{F}\defeq (p\times p)^{-1}(F)$. Let $x\mapsto \xi _x \in \EC _F (x)$ be a (not necessarily $F$-invariant) Borel selection of one end from $\EC _F (x)$, and note that the set $\tilde{X}_0\defeq \{ (x,\xi _x):x\in X\}$ is an $\tilde{F}$-complete section on which $\tilde{E}$ and $\tilde{F}$ are isomorphic to $E$ and $F$ respectively. 
In particular, $\tilde{F}$ is a maximal amenable subequivalence relation of $\tilde{E}$.

Note that $F$ is $\mu$-nowhere smooth, being a maximal amenable subequivalence relation of the $\mu$-nowhere smooth equivalence relation $E$. It follows that $\tilde{F}$ and $\tilde{E}$ are each $\tilde{\mu}$-nowhere smooth. 

Define a graph $\tilde{T}$ on $\tilde{X}$ by
\[
\tilde{T} = \{ ((x,\xi _x),(y,\xi _y)) \, : \, (x,y)\in T \} \cup \{ ((x,\xi ), (x,\eta )) \in \tilde{X}  \, : \, x\in X \text{ and }\xi\neq \eta \} .
\]
Then $\tilde{T}$ is a treeing of $\tilde{E}$, and the projection $p$ extends continuously to a bijective map $\del p : \del_{\tilde{T}} \tilde{X} \to \del_T X$ which identifies the ends of $\tilde{T}$ with the ends of $T$. 
Then the maximal $\tilde{F}$-invariant end selection with respect to $\tilde{T}$ is given by $\EC _{\tilde{F}}(x,\xi )= (\del p )^{-1}(\EC _F (x))$.  

Let $\tilde{Q}$ be the subequivalence relation of $\tilde{E}$ defined by
\[
\tilde{Q} \defeq \{ ((x,\xi ), (y,\eta ))\in \tilde{E} : \xi = \eta \} .
\]
Observe that $\tilde{Q}$ is amenable by \cref{end_selection}, since there is a $\tilde{Q}$-invariant Borel end selection of exactly one end out of each $\tilde{T}$-component, namely, $(x,\xi) \mapsto (\del p)^{-1}(\xi)$.
Furthermore, each $\tilde{F}$-class contains at most two $\tilde{Q}\cap\tilde{F}$-classes, hence $\tilde{Q} \cap \tilde{F}$ is $\tilde{\mu}$-nowhere smooth. 
In addition, since $\tilde{E}$ is treeable and $\tilde{Q}$ and $\tilde{F}$ are amenable subequivalence relations whose intersection is $\tilde{\mu}$-nowhere smooth, it follows from \cref{nowhere_smooth_intersection} that the equivalence relation generated by $\tilde{Q}$ and $\tilde{F}$ is amenable. 
Since $\tilde{F}$ is maximal amenable, after discarding a null set we have $\tilde{Q}\subseteq \tilde{F}$. Therefore, if $x,y\in X$ are such that $\EC _F(x)$ and $\EC _F(y)$ are not disjoint, say $\xi \in \EC _F(x)\cap \EC _F(y)$, then $((x,\xi),(y,\xi) ) \in \tilde{Q}$, so $((x,\xi ),(y,\xi )) \in \tilde{F}$, and hence $(x,y)\in F$. 
\end{proof}

The following statement in the pmp setting again contrasts with \cref{example:full_convex_hull}.

\begin{cor}[Convex hulls in the pmp setting]\label{convex_hulls}
Let $E$ be a pmp countable Borel equivalence relation on $(X,\mu)$ that is $\mu$-nowhere smooth, and let $T$ be a treeing of $E$. 
Let $F$ be a maximal amenable subequivalence relation of $E$. 
Then, after discarding a null set, we have:

\begin{enumerate}[(a),leftmargin=*]
\item\label{convex_hull_loc_fin} $T \rest{\Conv_T ([x]_F)}$ is locally finite for each $x \in X$.

\item\label{finite_intersection} The intersection $\Conv_T ([x]_F)  \cap \Conv_T ([y]_F)$ is finite for all $F$-inequivalent points $x,y\in X$.
\end{enumerate}
\end{cor}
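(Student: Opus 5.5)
The plan is to combine the moreover part of \cref{topography_selects_ends} with \cref{transversality}. In the pmp case the moreover part applies with $\mu$ itself as the $F$-invariant probability measure. So after discarding a null set, $\del_T(\Conv_T[x]_F)=\EC_F(x)$ for every $x\in X$. Since $F$ is maximal amenable in the nowhere smooth $E$, it is $\mu$-nowhere smooth, so $\Xns_F$ is conull and this holds almost everywhere. Consequently, for a.e.\ $x$ the tree $T\rest{\Conv_T([x]_F)}$ has only one or two ends. By \cref{transversality}, $\EC_F(x)\cap\EC_F(y)=\emptyset$ whenever $x,y$ are $F$-inequivalent.

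For \labelcref{finite_intersection}, fix $F$-inequivalent $x,y$ in the same $E$-class, and set $K\defeq\Conv_T([x]_F)\cap\Conv_T([y]_F)$. Both hulls are $T$-convex, so $K$ is $T$-convex, hence a subtree. If $K$ were infinite, I would argue it contains a geodesic ray. Every $T$-convex set is the convex hull of its points, and an infinite subtree whose ends are limited must either contain a ray or have a vertex of infinite degree. I would postpone the latter case, since it is handled by \labelcref{convex_hull_loc_fin}: local finiteness of $T$ on each hull makes $K$ locally finite, and König's lemma then yields a ray. That ray represents an end in $\del_T\Conv_T[x]_F\cap\del_T\Conv_T[y]_F=\EC_F(x)\cap\EC_F(y)=\emptyset$, a contradiction. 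So \labelcref{finite_intersection} reduces to \labelcref{convex_hull_loc_fin}.

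For \labelcref{convex_hull_loc_fin}, I would use the pmp retraction remark. Since $F$ is amenable in the pmp setting, it has cost $1$, so $F$ has finite cost. By the well-foundedness remark, the lexicographically least retraction $r_C:\Conv_T(C)\to C$ then has well-founded fibers. Also, the treeing $T_F$ obtained from these retractions is locally finite a.e. Suppose some $v\in\Conv_T(C)$ has infinitely many $T$-neighbors in $\Conv_T(C)$. Each such neighbor $w$ not in the fiber of $v$ has its own fiber. These fibers are disjoint and connected, and they lie on distinct branches at $v$. This would give infinitely many distinct points $r_C(w)$ adjacent in $T_F$ to $r_C(v)$, or else infinitely many branches contained in a single fiber. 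The second option is ruled out because every branch of $\Conv_T(C)$ at $v$ meets $C$, and points of $C$ are fixed by $r_C$. Hence $r_C(v)$ would have infinite $T_F$-degree, contradicting local finiteness on a conull invariant set.

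An alternative route to \labelcref{convex_hull_loc_fin} is available. A vertex of infinite degree in the hull, with each branch meeting $C$, gives a $\rho$-infinite, here simply infinite, structure. A mass-transport argument in the pmp setting then shows that invariantly choosing such vertices forces $F$ to be smooth. The main obstacle is making either argument precise: either adapting the retraction remark so that infinite hull degree translates into infinite $T_F$-degree, or running the mass transport to contradict nowhere smoothness.
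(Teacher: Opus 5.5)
Your proof of part (b) is the paper's: local finiteness from (a), König's lemma, the moreover part of \cref{topography_selects_ends}, and \cref{transversality}. For (a) you use a different retraction, and the key counting step is misstated. Fix $v\in\Conv_T(C)$ of infinite degree in the hull, and put $z=r_C(v)$ and $\Phi=r_C^{-1}(z)$. Your first case, where infinitely many hull-neighbors of $v$ lie outside $\Phi$, is fine. The complementary case, however, is that infinitely many neighbors $w$ of $v$ lie \emph{in} $\Phi$. This is not ``ruled out'' by the fact that every branch meets $C$, and it genuinely occurs for the lexicographically least retraction when the $C$-points of those branches are far from $v$.

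What that fact does give is the following. For all but at most one such $w$, the branch $B_w$ of $\Conv_T(C)$ at $v$ through $w$ avoids $z$ but contains some $c_w\in C$. Since $r_C(c_w)=c_w\neq z$, the geodesic $[w,c_w]_T\subseteq B_w$ leaves $\Phi$ through an edge $(a_w,b_w)$ with $a_w\in\Phi$ and $b_w\notin\Phi$. The $b_w$ lie in distinct branches at $v$, so the points $r_C(b_w)$ are pairwise distinct: a connected fiber containing two of the $b_w$ would contain $v$, hence would equal $\Phi$. Each $r_C(b_w)$ is $T_F$-adjacent to $z$. Thus $z$ has infinite $T_F$-degree in this case too. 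With this repair your argument for (a) is complete; the well-foundedness of fibers and the mass-transport alternative play no role.

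The paper proves (a) differently. It uses the retractions of \cref{retraction_via_1-end_selection,retraction_via_2-end_selection}, which are finite-to-one. Their retract is a treeing of the amenable pmp relation $F$, hence locally finite by Levitt. Each fiber is finite and its outgoing hull edges correspond bijectively to $T_F$-edges at its image, so local finiteness of the hull is immediate, with no counting needed.

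Your route bypasses end selection for (a). The branch argument shows that the hull is locally finite whenever \emph{some} retraction along $T$ to $C$ with connected fibers has a locally finite retract. So it works with the standard lexicographically least retraction. It also gives (a) for any subrelation of finite cost, using the same fact invoked in the paper's remark on well-founded fibers. The price is the extra combinatorial step above, which the paper's finite-to-one retractions make unnecessary.
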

\begin{proof}
Recall that treeings of pmp amenable equivalence relations are locally finite, in fact, expected degree at most $2$ \cite{Levitt}.
Using this, part \labelcref{convex_hull_loc_fin} follows from \cref{retraction_via_1-end_selection,retraction_via_2-end_selection} since the obtained retractions are finite-to-one.

Part \labelcref{finite_intersection} now follows from \labelcref{convex_hull_loc_fin} together with \cref{topography_selects_ends,transversality}.
\end{proof}

\begin{cor}[Transversality of subrelations]\label{relation_transversality}
Let $E$ be an mcp countable Borel equivalence relation on $(X,\mu )$ that is $\mu$-nowhere smooth, and let $T$ be a treeing of $E$. Let $F_0$ and $F_1$ be maximal amenable subequivalence relations of $E$. 
Then, after discarding a null set, for each $x\in X$ the sets $\EC_{F_0}(x)$ and $\EC_{F_1}(x)$ are either equal or disjoint.
In particular, $F_0$ and $F_1$ coincide on the set $A=\{ x\in X : \EC_{F_0}(x)\cap \EC_{F_1}(x)\neq\emptyset \}$.
\end{cor}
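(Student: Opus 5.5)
The plan is to reduce the statement to \cref{unique_max_amenable}, via the intersection $F_0\cap F_1$ restricted to the set $A$. Since $E$ is $\mu$-nowhere smooth, the maximal amenable relations $F_0$ and $F_1$ are $\mu$-nowhere smooth (as in the proof of \cref{transversality}). Hence $\Xns_{F_0}$ and $\Xns_{F_1}$ are conull, and $\EC_{F_0}$, $\EC_{F_1}$ are defined a.e.

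The key construction is an amenable relation with nowhere smooth intersection with both $F_0$ and $F_1$ on $A$. On $A$, put
\[
\EC(x)\defeq \EC_{F_0}(x)\cap\EC_{F_1}(x),
\]
and let $R\defeq\set{(x,y)\in E\rest{A}: \EC(x)=\EC(y)}$. This is a Borel subequivalence relation, and $\EC$ is an $R$-invariant Borel end selection of one or two ends, so $R$ is amenable by \cref{end_selection}. I would rather use $F_0\cap F_1\rest A$ directly, but it is not obviously nowhere smooth.

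The cleaner route is the end-disentanglement of \cref{transversality}. Pass to $\tilde X$ built from $\EC_{F_0}$, with the lifted treeing $\tilde T$ and the relation $\tilde Q$ of sharing the same end. The proof of \cref{transversality} shows $\tilde Q\subseteq\tilde F_0$ a.e.; this is exactly transversality of classes. Now run the same argument with $F_1$ in place of $\tilde F_0$, using the joint space
\[
\set{(x,\xi):\xi\in\EC_{F_0}(x)\cup\EC_{F_1}(x)}.
\]
There, $\tilde Q\cap\tilde F_i$ is nowhere smooth on the part lying over $\EC_{F_i}$. On the part lying over a common end (the preimage of $A$), $\tilde Q$, $\tilde F_0$ and $\tilde F_1$ are amenable, and $\tilde Q\cap\tilde F_0$ and $\tilde Q\cap\tilde F_1$ are both nowhere smooth there. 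Two applications of \cref{nowhere_smooth_intersection} give that $\tilde Q\vee\tilde F_0$ and $\tilde Q\vee\tilde F_1$ are amenable. Maximality (the lifts of maximal amenable relations are maximal, as in \cref{transversality}) then yields $\tilde Q\subseteq \tilde F_0\cap\tilde F_1$ over common ends.

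Consequently, for $x\in A$ with common end $\xi$, the $\tilde Q$-class of $(x,\xi)$ lies in both $\tilde F_0$ and $\tilde F_1$. Hence the relations $(\tilde F_0\cap\tilde F_1)$ restricted over $A$ contain the nowhere smooth $\tilde Q$. By \cref{unique_max_amenable} the maximal amenable relations containing it coincide, so $F_0\rest A=F_1\rest A$. Also $A$ is invariant, since by \eqref{eq:maximal} $\EC_{F_i}$ is constant on $F_i$-classes.

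The equal-or-disjoint claim follows from \eqref{eq:maximal} applied on $A$. Since $F_0\rest A=F_1\rest A=\hat F$, both $\EC_{F_0}$ and $\EC_{F_1}$ are maximal $\hat F$-invariant selections on $A$, and by the essential uniqueness in \cref{end_selection}(b) they agree a.e. on $A$. Off $A$ they are disjoint by definition.

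The main obstacle is the bookkeeping in the joint disentangled space. One must check that $\tilde Q$ restricted over $A$ is nowhere smooth (each class contains infinitely many $F_0$-related lifts), and that maximality of $F_i$ transfers to its lift on this larger space. The latter follows because $\tilde X$ has a complete section isomorphic to $X$ on which the lifted relations restrict to the originals.
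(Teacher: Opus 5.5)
\paragraph{A gap at the crux.} The joint disentanglement does not supply the step that carries the content. The inclusion $\tilde Q\subseteq\tilde F_0\cap\tilde F_1$ holds only on the common-end part $\tilde A\defeq\{(x,\xi):\xi\in\EC_{F_0}(x)\cap\EC_{F_1}(x)\}$, and it is just \cref{transversality} applied to $F_0$ and to $F_1$, which you can cite directly.

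To finish via \cref{unique_max_amenable} you then need $\tilde Q\rest{\tilde A}$ to be nowhere smooth. But the $\tilde Q\rest{\tilde A}$-class of $(x,\xi)$ consists of the lifts $(y,\xi)$ with $\xi\in\EC_{F_0}(y)\cap\EC_{F_1}(y)$. By \cref{transversality} these are exactly the $y\in[x]_{F_0}\cap[x]_{F_1}\cap A$. So what you need is literally that $(F_0\cap F_1)\rest{A}$ is nowhere smooth, the very point you set aside as not obvious; the lift has only relocated it. Your parenthetical ``each class contains infinitely many $F_0$-related lifts'' asserts this rather than proving it. In the mcp setting infinitely many points would not suffice anyway: by \cref{smooth=rho-finite} you need $\rho$-infinite classes.

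\paragraph{The missing idea: a finite-index observation.} Your first construction already contains it. If $x\in A$ and $y\in[x]_{F_0}\cap A$, every common end of $y$ lies in $\EC_{F_0}(y)=\EC_{F_0}(x)$, which has at most two elements. Your $R$ therefore has two properties:
\begin{itemize}
\item $R\subseteq(F_0\cap F_1)\rest{A}$ by \cref{transversality}, since points with $\EC(x)=\EC(y)\neq\emptyset$ share an end of $\EC_{F_i}$ for each $i$;
\item each $F_0\rest{A}$-class is a union of at most three $R$-classes.
\end{itemize}
Since $F_0$ is nowhere smooth and $A$ is non-null, $F_0\rest{A}$ is nowhere smooth. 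A subrelation splitting each class of a nowhere smooth relation into finitely many classes is again nowhere smooth. Indeed, if $R$ were smooth on a non-null $R$-invariant set, a Borel transversal would meet each $F_0\rest{A}$-class in its saturation in a finite nonempty set, making $F_0\rest{A}$ smooth there.

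So $(F_0\cap F_1)\rest{A}$ is nowhere smooth. Then \cref{nowhere_smooth_intersection} in $E\rest{A}$, with maximality of $F_0\rest{A}$ and $F_1\rest{A}$, gives $F_0\rest{A}=F_1\rest{A}$. This is the paper's argument; there $R$ is defined via a Borel choice of one common end $\xi_x$, which gives index at most two. The amenability of $R$ that you derive from \cref{end_selection} plays no role.

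\paragraph{The last step.} You assert that $A$ is invariant, but $A$ need not be $F_0$-invariant; it is only $(F_0\cap F_1)$-invariant in general. So the claim that $\EC_{F_0}\rest{A}$ is the maximal $F_0\rest{A}$-invariant selection does not follow from \cref{end_selection}(b) alone. It follows from \cref{parabolic_in_parabolic}.
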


\begin{proof}
It suffices to prove that $F_0\rest{A}=F_1\rest{A}$.
Let $x\mapsto \xi _x$ be a Borel map on $A$ with $\xi _x \in \EC_{F_0}(x)\cap\EC_{F_1}(x)$, and let $R$ be the subequivalence relation of $E\rest{A}$ given by
\[
R=\{ (x,y)\in E\rest{A} : \xi _x =\xi _y \} .
\]
By \cref{transversality} we may assume, after discarding a null set, that for each $i\in \{ 0, 1\}$ the sets $\EC _{F_i}(x)$ and $\EC_{F_i}(y)$ are disjoint for all $F_i$-inequivalent points $x,y\in X$.  
Then $R\subseteq (F_0\rest{A})\cap (F_1\rest{A})$ since for $(x,y)\in R$ and $i\in \{ 0, 1\}$ the sets $\EC _{F_i}(x)$ and $\EC _{F_i}(y)$ both contain $\xi _x=\xi _y$, hence are not disjoint.
Moreover, each $F_0\rest{A}$-class contains at most two $R$-classes, since $|\EC_{F_0}(x)|\leq 2$ for all $x\in X$. 
It follows that $R$ is nowhere smooth.
Since $F_0\rest{A}$ and $F_1\rest{A}$ are both maximal amenable in $E\rest{A}$, and their intersection is nowhere smooth, it follows from \cref{nowhere_smooth_intersection} that $F_0\rest{A}=F_1\rest{A}$.
\end{proof}

\subsection{Conjugacy}

Let $E$ be an mcp a countable Borel equivalence relation on a standard probability space $(X,\mu)$.
Recall that $[[E]]$ denotes the semigroup of all partial injections whose graph is Borel and contained in $E$. 
We let $[E]$ denote the \dfn{Borel full group of $E$}, i.e., the group  of all Borel automorphisms of $X$ whose graph is contained in $E$.

For Borel subrelations $R_0$ and $R_1$ of $E$, we say that $R_0$ and $R_1$ are \dfn{conjugate within $E$} if, after discarding a null set, $R_1 = (\phi \times \phi)(R_0)$ for some $\phi \in [E]$.

Suppose that $A_0$ and $A_1$ are positive measure subsets of $X$, and $F_0$ and $F_1$ are subequivalence relations of $E|_{A_0}$ and $E|_{A_1}$ respectively.
We say that $F_0$ and $F_1$ are \dfn{complete section conjugate within $E$} if, after discarding a $\mu$-null set, 
for each $i\in\{ 0,1 \}$ there exist Borel $F_i$-complete sections $C_i\subseteq A_i$, and some $\phi \in [[E]]$ with $C_0\subseteq \dom (\phi )$ and $C_1 \subseteq \im(\phi )$ such that $(\phi \times \phi )(F_0|_{C_0})=(F_1|_{C_1})$. 

\begin{cor}[Transversality and conjugacy]\label{Transversality_and_conjugacy}
Let $E$ be an mcp countable Borel equivalence relation on $(X,\mu )$ that is $\mu$-nowhere smooth, and assume that $T$ is a treeing of $E$. 
Let $F_0$ and $F_1$ be maximal amenable subrelations of $E$, and
consider the bipartite graph $G$ on the disjoint union $X/F_0 \sqcup X/F_1$ given by
\[
G = \{ ([x]_{F_0},[y]_{F_1}) : (x,y)\in E \text{ and } \EC_{F_0}(x)\cap\EC_{F_1}(y)\neq\emptyset\} .
\]
Then, after discarding a null set, each vertex of $G$ has degree at most $1$ (i.e., $G$ is a partial matching), $\EC_{F_0}(x)=\EC_{F_1}(y)$ for all $([x]_{F_0},[y]_{F_1})\in G$, and the restrictions $F_0\rest{A_0}$ and $F_1\rest{A_1}$ are complete section conjugate within $E$, where
\[
A_i\defeq \{ x\in X : \mathrm{deg}_G([x]_{F_i}) = 1\} .
\]
In particular, if $F_0$ and $F_1$ are ergodic and are not complete section conjugate within $E$ then, after discarding a null set, the sets $\EC_{F_0}(x)$ and $\EC_{F_1}(y)$ are disjoint for all $(x,y)\in E$. 
\end{cor}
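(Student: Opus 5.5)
The approach is to reduce everything to \cref{transversality} and \cref{relation_transversality}, applied along the edges of $G$.

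First, the partial matching claim. Suppose $([x]_{F_0},[y]_{F_1})$ and $([x']_{F_0},[y]_{F_1})$ are edges of $G$. We may take $y$ as the representative in both, since $\EC_{F_1}$ is $F_1$-invariant. Then there are ends $\xi\in\EC_{F_0}(x)\cap\EC_{F_1}(y)$ and $\xi'\in\EC_{F_0}(x')\cap\EC_{F_1}(y)$. To transport the problem to a single point, I would use the map $z\mapsto$ (some point of $[z]_E$ realizing the match), as follows. Choose, by Luzin--Novikov, a Borel partial map on the set $A_1'$ of $y$ admitting a $G$-neighbour, sending $y$ to some $x$ with $\EC_{F_0}(x)\cap\EC_{F_1}(y)\ne\emptyset$. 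The key intermediate claim is a strengthening of \cref{relation_transversality} to pairs of points: for $(x,y)\in E$ with $\EC_{F_0}(x)\cap\EC_{F_1}(y)\neq\emptyset$ we have $\EC_{F_0}(x)=\EC_{F_1}(y)$.

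I would prove this by the same argument as in \cref{relation_transversality}. Let $W$ be the set of $y$ admitting such an $x$, and pick Borel $y\mapsto(x_y,\xi_y)$ with $\xi_y\in\EC_{F_0}(x_y)\cap\EC_{F_1}(y)$. Define $R\defeq\{(y,y')\in E\rest{W}:\xi_y=\xi_{y'}\}$. By \cref{transversality} for $F_1$, we get $R\subseteq F_1$. The map $y\mapsto x_y$ carries $R$-related points to points whose $\EC_{F_0}$ share $\xi_y$, so again by \cref{transversality} they are $F_0$-related. Each $F_1$-class splits into at most two $R$-classes, so $R$ is nowhere smooth.

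The main obstacle is that $y\mapsto x_y$ need not be injective. To get an honest partial injection, I would pass to a Borel subset $C_1\subseteq W$ that is an $F_1$-complete section on which $y\mapsto x_y$ is injective. This is possible because $E$ has countable classes: partition $W$ into countably many pieces on which $y\mapsto x_y$ is injective, for instance using a Feldman--Moore enumeration of $E$ and taking $x_y$ as the first admissible element. I would then try to restrict to a suitable piece complete for $F_1$; alternatively, I would note that nowhere smoothness survives restriction to a complete section after pulling back to the $E$-saturation.

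Having $\phi\colon y\mapsto x_y$ in $[[E]]$ on such a set, the relation $(\phi\times\phi)(F_1\rest{C_1})$ and $F_0\rest{\phi(C_1)}$ both contain the image of $R$, which is nowhere smooth. Applying \cref{unique_max_amenable} (in the form used in \cref{malnormality}, on the relevant sets), the two maximal amenable relations coincide on $\phi(C_1)$. Hence $\EC_{F_0}(\phi y)=\EC_{F_1}(y)$ by \eqref{eq:maximal} and \cref{topography_selects_ends}, since the maximal end selection of a relation is determined by the relation.

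With the pointwise equality $\EC_{F_0}(x)=\EC_{F_1}(y)$ for every edge, degree at most one follows from \cref{transversality}. If $[x]_{F_0}$ and $[x']_{F_0}$ are both adjacent to $[y]_{F_1}$, then $\EC_{F_0}(x)=\EC_{F_1}(y)=\EC_{F_0}(x')$, so $x\mathrel{F_0}x'$; the same argument works symmetrically. Complete section conjugacy is then witnessed by $\phi$ and the sets $C_1$, $C_0\defeq\phi(C_1)$; these are complete sections for $F_1\rest{A_1}$ and $F_0\rest{A_0}$ because every matched class meets them, after the countable-piece argument. Finally, in the ergodic case $A_0$ is $F_0$-invariant, hence null or conull, and similarly for $A_1$. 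If they are conull we obtain complete section conjugacy, so otherwise $G$ is empty a.e., which is exactly the disjointness statement.
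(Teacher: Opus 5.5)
Your route is correct in substance, but it differs from the paper's. The paper uses \cref{transversality} only to get degree at most $2$. It then lets $\tilde F_i$ be the relation whose classes are the unions of the $F_i$-classes lying in one $G$-component. Since the distance-two graph on $X/F_i$ has degree at most $2$, $F_i$ is Borel 1-coamenable in $\tilde F_i$. So $\tilde F_i$ is Borel 2-amenable, hence $\mu$-amenable by \cref{amenable-coamenable,2-amen_implies_mu-amen}, and maximality forces $\tilde F_i=F_i$, which is exactly the matching. Conjugacy then follows by Luzin--Novikov.

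You instead run the uniqueness argument of \cref{malnormality} and \cref{relation_transversality}. The nowhere smooth relation $R$ is carried by $\phi$ into both $F_0$ and $(\phi\times\phi)(F_1)$, so \cref{unique_max_amenable} identifies the two on the image of $\phi$. This gives the edgewise equality $\EC_{F_0}(x)=\EC_{F_1}(y)$ first, and the matching afterwards from \cref{transversality}. Here you use, as the paper does in \cref{malnormality}, that restricting a maximal amenable subrelation to a Borel set keeps it maximal amenable there.

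The trade-off: your route avoids the external Borel 2-amenability input and makes the equality of end sets on edges explicit, which the paper's proof leaves implicit. The paper's route is global and needs no partial injections until the matching is known.

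Three points need tightening.
\begin{enumerate}
\item Before degree $\le 1$ is known, you cannot expect one injective piece that is complete for $F_1$. Two $F_1$-classes may a priori share an $F_0$-neighbour, which is precisely the degree-$2$ case being excluded, so a least-index choice can collide across them. The fix is to run your argument separately on each piece $B_n=\{y\in W: n(y)=n\}$ with $\phi=\gamma_n\rest{B_n}$, where $(\gamma_n)$ is a Feldman--Moore enumeration of $E$. Each $B_n$ is an $F_1$-complete section of $[B_n]_{F_1}$, so $F_1\rest{B_n}$, and hence $R\rest{B_n}$, is still nowhere smooth. Getting the conclusion for a.e.\ $y\in B_n$, for every $n$, covers $W$. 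Only after the matching is established does the least-index trick (take the least $m$ with $B_m\cap[y]_{F_1}\neq\emptyset$) give a single partial injection on an $F_1\rest{A_1}$-complete section.
\item The equality $\EC_{F_0}(\phi(y))=\EC_{F_1}(y)$ should come from the maximality clause of \cref{end_selection}, not from \cref{topography_selects_ends}. The selection $\phi(y)\mapsto\EC_{F_1}(y)$ is $F_0\rest{\phi(B_n)}$-invariant and extends $F_0$-invariantly to the saturation, so it is contained in $\EC_{F_0}$; the reverse inclusion is symmetric. Nonvanishing along a class does not transfer directly under $\phi$, since $\phi$ changes both positions and $\rho$-values.
\item Passing from the chosen $x_y$ to an arbitrary $x$ with $\EC_{F_0}(x)\cap\EC_{F_1}(y)\neq\emptyset$ takes one more application of \cref{transversality} for $F_0$.
\end{enumerate}
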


\begin{proof}
By \cref{transversality} and the fact that $\EC_{F_i} \le 2$ for $i=0,1$, the degree of each vertex of $G$ is at most $2$ and we need to show that it is at most $1$ a.e.
For each $i=0,1$, let $\tilde{F}_i$ be the equivalence relation on $X$ defined by $(x_0,x_1) \in \tilde{F}_i$ if and only if the $F_i$-classes of $x_0$ and $x_1$ are in the same $G$-component.
By definition, $\tilde{F}_i \supseteq F_i$ and we define a graph $\tilde{G}_i$ as the restriction to $X/F_i$ of the distance-2 graph $G^2$ of $G$.
Then each $\tilde{G}_i$ has degree at most $2$, which implies that $F_i$ is Borel 1-coamenable in $\tilde{F}_i$.
Hence $\tilde{F}_i$ is Borel 2-amenable by \cref{amenable-coamenable}, and thus $\mu$-amenable by \cref{2-amen_implies_mu-amen}.
By maximality, after discarding a null set, $\tilde{F}_i = F_i$, so $G$ has degree at most $1$.
The fact that $F_0 \rest{A_0}$ and $F_1 \rest{A_1}$ are conjugate when restricted to some Borel complete sections now follows from Luzin--Novikov uniformization.
\end{proof}

\begin{example}
In general, complete section conjugacy does not imply conjugacy, even for ergodic maximal amenable subrelations of mcp treeable equivalence relations.
For example, let $\Gamma$ be the free group on two generators $a$ and $b$, and take a free mcp action of $\Gamma$ on $(X,\mu )$ where the action of the subgroup $\langle a \rangle$ is measure preserving and ergodic, and the action of the subgroup $\langle b\rangle$ is ergodic and mcp, and such that the associated Radon--Nikodym cocycle $\rho ^x(bx)$ is equal to a constant $c>1$ for all $x$ in a positive measure subset $C$ of $X$.
For example, take $(X,\mu ) \defeq ([0,1]^\Gamma \times 2^{\N}, \lambda ^{\Gamma}\times \nu_{2/3}^\N )$ (where $\nu _{1/3} (\set{1}) \defeq 1/3$) and the product action determined by $a.(z,y) \defeq (\sigma _a(z),\tau _a(y))$ and $b.(z,y) \defeq (\sigma _b(z),\tau _b(y))$, 
where $\sigma$ is the Bernoulli shift action of $\Gamma$, and $\tau _a$ is an arbitrary invertible ergodic pmp transformation on $(2^\N, \nu_{2/3}^\N)$, and $\tau _b$ is the odometer on $2 ^\N$. 

Let $E$ be the orbit equivalence relation of the action of $\Gamma$. 
Since $\langle b\rangle$ is ergodic, the set $C$ is an $E_{\langle b\rangle}$-complete section.
We may find a Borel retraction $\psi : X\to b.C$ from $X$ to $b.C$ with $(x,\psi x)\in E$ and $\rho ^x (\psi x)\leq 1$ for all $x\in X$. 
Let 
\[
F_0 \defeq E_{\langle a\rangle} \text{ and } F_1 \defeq (\psi \times \psi )^{-1}(E_{\langle bab^{-1}\rangle}|_{b.C} ).
\]
Both $F_0$ and $F_1$ are maximal amenable in $E$, since $F_0$ is a free factor of $E$ and $F_1|_{b.C}$ is a free factor of $E|_{b.C}$, hence $F_1$ is a free factor of $E$ as well (in our upcoming article \cite{TTD:paddleball} we show that, in an mcp relation, every nowhere smooth amenable free factor is maximal amenable).
The relations $F_0$ and $F_1$ are complete section conjugate in $E$ since $(b\times b).(F_0|_C)= F_1|_{b.C}$.

We show that $F_0$ and $F_1$ are not conjugate in $E$. 
The Schreier graph $T$ of the action of $\Gamma$ (with respect to the generators $a$ and $b$) is a treeing of $E$.
Suppose toward a contradiction that $\phi \in [E]$ is such that $(\phi \times \phi )(F_0) =F_1$. 
Then, after discarding a null set, every $x\in X$ must satisfy $\EC _{F_1}(\phi (x))=\EC _{F_0}(x)$. 
By maximality of $F_1$, for $x\in C$, the only $F_1$-class in $[x]_E$ that selects the same ends of $T$ as $[x]_{F_0}$ is $[b.x]_{F_1}$, and hence $\varphi ([x]_{F_0})= [b.x]_{F_1}$. 
By construction, for $x\in C$, each pair $(y_0,y_1)$ with $y_0 \in [x]_{F_0}$ and $y_1\in [b.x]_{F_1}$ satisfies $\rho^\bullet(y_0) < \rho^\bullet(y_1)$.
Hence $\rho^\bullet(y) < \rho^\bullet(\phi(y))$ for all $y$, so the mass transport principle applied to the indicator function of the graph of $\phi$ shows that $\mu (X)=\mu (\dom(\phi ))<\mu (\im(\varphi ))=\mu (X)$, a contradiction. 
\end{example}

%% file: nontreeability.tex
\section{Applications to nontreeability}\label{section:non-treeability}

The contrapositive of \cref{nowhere_smooth_intersection} gives a criterion for nontreeability in the mcp setting, as illustrated by the following example.

\begin{example}
Take an mcp action of $\F_2 \times \Z$ whose orbit equivalence relation $E$ is nonamenable and for which the subrelation generated by $1\times \Z$ is nowhere smooth.
Then \cref{nowhere_smooth_intersection} implies that $E$ is nontreeable.
Indeed, denoting the free generators of $\F_2$ by $a$ and $b$, the subrelations generated by the subgroups $\gen{a} \times \Z$ and $\gen{b} \times \Z$ are each amenable and their intersection is nowhere smooth, but they together generate all of $E$. 
\end{example}

We give two more examples of nontreeability below, the first of which leads to an alternative proof (and refinement) of a theorem of Hjorth \cite{Hjorth:non-treeability}.

\subsection{Class-bijective and relatively measure preserving extensions} 

\begin{defn}
Let $E$ and $F$ be mcp countable Borel equivalence relations on standard probability spaces $(X, \mu)$ and $(Y,\nu)$ respectively.
A pmp Borel homomorphism $\pi : (X,\mu ) \to (Y,\nu )$ from $E$ to $F$ is called \dfn{class-bijective} if for each $ x \in X$ the restriction $\pi \rest{[x]_E}$, of $\pi$ to $[x]_E$, is a bijection between $[x]_E$ and $[\pi(x)]_F$.
In this case we call $E$ a \dfn{class-bijective extension} of $F$ via $\pi$.
Given such a class-bijective extension, we have a natural embedding $[[F]]\hookrightarrow [[E]]$, $\psi\mapsto \tilde{\psi}$, satisfying $\pi \circ \tilde{\psi} = \psi \circ \pi$, and defined by taking $\tilde{\psi}(x)$ to be the unique $z\in [x]_E$ with $\pi (z)=\psi (\pi (x))$, for $\psi \in [[F]]$ and $x\in \dom (\tilde{\psi})\defeq \pi^{-1}(\dom (\psi ))$.
Such a class-bijective extension is called \dfn{relatively measure preserving} if the conditional expectation $\Phi_\pi : L^{\infty}(X ,\mu) \to L^{\infty}(Y,\nu)$ with respect to $\pi$ is $[[F]]$-equivariant.
\end{defn}

\begin{prop}
Let $E$ and $F$ be mcp countable Borel equivalence relations on standard probability spaces $(X, \mu)$ and $(Y,\nu)$ respectively. 
Suppose that $E$ is a class-bijective extension of $F$ via the pmp Borel homomorphism $\pi : (X,\mu )\to (Y,\nu )$.
Then the extension is relatively measure preserving if and only if 
the Radon--Nikodym cocycle of $E$ with respect to $\mu$ factors through $(\pi \times \pi)\rest{E}$ to the Radon--Nikodym cocycle of $F$ with respect to $\nu$.
\end{prop}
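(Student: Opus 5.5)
We prove both directions by testing the $[[F]]$-equivariance of $\Phi_\pi$ against indicator functions, and expressing everything through the fiber measures that define the Radon--Nikodym cocycles. Fix $\psi\in[[F]]$ with lift $\tilde\psi\in[[E]]$. Recall that $\pi_*\mu=\nu$ and $\pi\circ\tilde\psi=\psi\circ\pi$. For $h\in L^\infty(X,\mu)$ and Borel $B\subseteq \im(\psi)$ we will compute $\int_{\pi^{-1}(B)}\tilde\psi.h\,d\mu$ and $\int_B \psi.\Phi_\pi(h)\,d\nu$ separately.

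For the first integral, we change variables along $\tilde\psi^{-1}$. This gives
\[
\int_{\pi^{-1}(B)}\tilde\psi.h\,d\mu=\int_{\pi^{-1}(\psi^{-1}B)} h(z)\,\rho_\mu(\tilde\psi z,z)\,d\mu(z).
\]
Here we use the standard identity $d(\tilde\psi^{-1})_*\mu/d\mu\,(z)=\rho_\mu(\tilde\psi z,z)$ on $\dom\tilde\psi$. This identity follows by applying the definitions of $\mu_E^\ell,\mu_E^r$ in \eqref{eq:fiber_measures} to the indicator of the graph of $\tilde\psi$. The same computation on $Y$ gives
\[
\int_B\psi.\Phi_\pi(h)\,d\nu=\int_{\psi^{-1}B}\Phi_\pi(h)(w)\,\rho_\nu(\psi w,w)\,d\nu(w)=\int_{\pi^{-1}(\psi^{-1}B)} h(z)\,\rho_\nu(\psi\pi z,\pi z)\,d\mu(z),
\]
where the last equality uses the defining property of conditional expectation together with $\pi_*\mu=\nu$.

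Now we compare. We have $\Phi_\pi(\tilde\psi.h)=\psi.\Phi_\pi(h)$ if and only if the two displayed quantities agree for all $B$. Letting $h$ range over $L^\infty$ and $B$ over Borel sets, this holds if and only if
\[
\rho_\mu(\tilde\psi z,z)=\rho_\nu(\pi\tilde\psi z,\pi z)\quad\text{for }\mu\text{-a.e. } z\in\dom\tilde\psi.
\]
Suppose first that $\rho_\mu=\rho_\nu\circ(\pi\times\pi)$ a.e. on $E$. Then this identity holds for every $\psi$, which gives equivariance. Conversely, suppose equivariance holds. By Feldman--Moore, $F$ is covered by countably many graphs of elements $\psi_n\in[[F]]$. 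Class-bijectivity gives $E=\bigcup_n\mathrm{graph}(\tilde\psi_n)$, since each $(\tilde\psi z,z)$ with $\pi\tilde\psi z=\psi\pi z$ arises this way. So the pointwise identities for the $\psi_n$ combine to give $\rho_\mu=\rho_\nu\circ(\pi\times\pi)$ off a $\mu_E^\ell$-null set. After choosing cocycle representatives, the factorization therefore holds on an $E$-invariant conull set.

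The main obstacle is the bookkeeping in the change-of-variables step. One needs the correct orientation, $\rho(\tilde\psi z,z)$ versus its inverse, consistent with the convention $\rho_\nu=d\nu^\ell_E/d\nu^r_E$ and $\rho^x(y)=\rho(y,x)$. One also needs to check that the converse really determines $\rho_\mu$ on all of $E$, not just on a generating set. Class-bijectivity is exactly what makes the lifts $\tilde\psi_n$ exhaust $E$. In particular, a countable family suffices, and the cocycle identity is not needed beyond that.
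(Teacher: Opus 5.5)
Your proof is correct. It runs on the same mechanism as the paper's: for each $\psi\in[[F]]$, equivariance of $\Phi_\pi$ with respect to $\psi$ is equivalent, by a change of variables along the lift $\tilde\psi$, to the Radon--Nikodym identity $\rho_\mu(\tilde\psi z,z)=\rho_\nu(\pi\tilde\psi z,\pi z)$ on $\dom\tilde\psi$.

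The difference is in how relative measure preservation is encoded. The paper disintegrates $\mu=\int_Y\mu_y\,d\nu$ and rewrites equivariance as $\tilde\psi_*\mu_{\psi^{-1}(y)}=\mu_y$ for a.e.\ $y\in\im(\psi)$. It then derives an integral identity and uses uniqueness of disintegration for the converse. You never disintegrate. You test $\Phi_\pi(\tilde\psi.h)=\psi.\Phi_\pi(h)$ against sets $\pi^{-1}(B)$ using only the defining property of conditional expectation. Both directions then reduce to a.e.\ equality of two densities in $L^1(\mu\rest{\dom\tilde\psi})$; their integrals are $\mu(\im\tilde\psi)=\nu(\im\psi)$, so no integrability issue arises. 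This is somewhat more elementary.

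You also give the Feldman--Moore and class-bijectivity argument that the lifts $\tilde\psi_n$ exhaust $E$. That justifies a step the paper only asserts: factorization of the cocycle is equivalent to the identity for each $\psi$. In exchange, the paper's formulation makes the fiberwise meaning transparent, namely that lifts carry fiber measures to fiber measures.

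One small point should be stated explicitly. By class-bijectivity, $\im(\tilde\psi)=\pi^{-1}(\im(\psi))$, so both $\Phi_\pi(\tilde\psi.h)$ and $\psi.\Phi_\pi(h)$ vanish off $\im(\psi)$. That is why testing only on Borel $B\subseteq\im(\psi)$ suffices.
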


\begin{proof}
Let $\mu = \int _Y \mu _y \, d\nu$ be the disintegration of $\mu$ over $\nu$ via $\pi$.
The extension being relatively measure preserving is equivalent to the statement that $\tilde{\psi} _*\mu _{\psi ^{-1}(y)} = \mu _{y}$ for a.e.\ $y\in \mathrm{im}(\psi)$, for each $\psi \in [[F]]$.  
The statement about Radon--Nikodym cocycles is equivalent to the statement that $\frac{d\psi _*\nu}{d\nu}\circ \pi =\frac{d\tilde{\psi}_*\mu}{d\mu}$ for each $\psi \in [[F]]$. 

For each $\psi \in [[F]]$ and $f\in L^{\infty}(X,\mu )$ with support contained in $\mathrm{im}(\tilde{\psi})=\pi ^{-1}(\mathrm{im}(\psi ))$ we have

\begin{align}
\nonumber \int _{\mathrm{im}(\psi )} \int _{\pi ^{-1}(y)} f(x)\, d\tilde{\psi}_*\mu_{\psi ^{-1}(y)} (x)\, d\nu (y) 
&= 
\int _{\mathrm{im}(\psi )} \int _{\pi ^{-1}(y)} f(\tilde{\psi} (x)) \frac{d\psi ^{-1}_*\nu}{d\nu }(\pi (x))\, d\mu _y(x) \, d\nu (y)  
\\
\label{eqn:rel_mp} &= \int _{\mathrm{im}(\tilde{\psi})} f(x) \frac{d\psi _*\nu}{d\nu}(\pi (x))^{-1}\frac{d\tilde{\psi}_*\mu}{d\mu}(x)\, d\mu (x) .
\end{align}
If the extension is relatively measure preserving then the left hand side of \labelcref{eqn:rel_mp} is equal to $\int f \, d\mu$ for every such $f$, hence $\frac{d\psi _*\nu}{d\nu}\circ \pi = \frac{d\tilde{\psi}_*\mu}{d\mu}$ and the relationship between Radon--Nikodym cocycles is established.
Conversely, if the relationship between Radon--Nikodym cocycles is assumed, then the right hand side of \labelcref{eqn:rel_mp} is equal to $\int f \, d\mu$ for every such $f$, so uniqueness of disintegration implies $\tilde{\psi}_*\mu_{\psi ^{-1}(y)}=\mu _y$ for a.e.\ $y\in \mathrm{im}(\psi )$, and this holds for every $\psi \in [[F]]$, i.e., the extension is relatively measure preserving. 
\end{proof}

It is straightforward to see that if $E$ is a class-bijective extension of an amenable equivalence relation $F$, then $E$ is amenable as well.
Under the assumption that the extension is relatively measure preserving, the converse holds. 

\begin{lemma}\label{relatively-mp_ext}
Let $E$ and $F$ be mcp countable Borel equivalence relations on standard probability spaces $(X, \mu)$ and $(Y,\nu)$ respectively, and suppose that $E$ is a relatively measure preserving class-bijective extension of $F$ via $\pi$.
Then:
\begin{enumerate}[(a)]
\item \label{relatively-mp_ext:amenability} If $F$ is $\nu$-nowhere amenable, then $E$ is $\mu$-nowhere amenable.

\item \label{relatively-mp_ext:smoothness} If $F$ is $\nu$-nowhere smooth, then $E$ is $\mu$-nowhere smooth.
\end{enumerate}
\end{lemma}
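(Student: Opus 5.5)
For both parts I would work with the conditional expectation $\Phi_\pi$ and the factorization of cocycles from the preceding proposition: $\rho_\mu(x',x)=\rho_\nu(\pi(x'),\pi(x))$ for $(x',x)\in E$. Both properties pass to $\pi^{-1}$ of invariant sets: if $A\subseteq X$ is $E$-invariant and non-null, then $B\defeq\{y:\mu_y(A)>0\}$ is $F$-invariant (up to null sets) and non-null. Invariance is where relative measure preservation enters: $\tilde\psi_*\mu_{\psi^{-1}(y)}=\mu_y$ and $\tilde\psi(A)\subseteq A$ give $\mu_{\psi(y)}(A)=\mu_y(A)$. So it suffices to show that if $E\rest A$ is amenable (resp.\ smooth) then $F\rest B$ is amenable (resp.\ smooth).

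\textbf{Part (b) (smoothness).} By \cref{smooth=rho-finite}, it suffices to show that if a.e.\ $E\rest A$-class is $\rho_\mu$-finite, then a.e.\ $F\rest B$-class is $\rho_\nu$-finite. Since $\pi$ is a bijection on classes and the cocycles factor, $\rho_\mu^x([x]_E)=\rho_\nu^{\pi(x)}([\pi(x)]_F)$ for every $x$. Hence if $E\rest A$ is smooth, then for a.e.\ $y\in B$ there is some $x\in A$ with $\pi(x)=y$ whose class is $\rho_\mu$-finite, and then $[y]_F$ is $\rho_\nu$-finite. Thus $F\rest B$ is smooth on a conull set, contradicting $\nu$-nowhere smoothness. (Here I only need $\mu_y(A)>0$, so that some point of $A$ lies over $y$ outside a null set.)

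\textbf{Part (a) (amenability).} Given a unital positive $[[E\rest A]]$-equivariant map $M:L^\infty(E\rest A)\to L^\infty(A)$, I would build $N:L^\infty(F\rest B)\to L^\infty(B)$ by lifting, averaging over fibers, and normalizing.
\begin{itemize}
\item[(1)] Lift: for $h\in L^\infty(F\rest B)$, set $\tilde h\defeq h\circ(\pi\times\pi)$.
\item[(2)] Apply $M$ to $\tilde h$.
\item[(3)] Average over fibers: $N(h)(y)\defeq \mu_y(A)^{-1}\,\Phi_\pi\bigl(\chi_A\,M(\tilde h)\bigr)(y)$.
\end{itemize}
Positivity and unitality are clear from the construction. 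For equivariance, take $\psi\in[[F\rest B]]$. One checks that $\widetilde{\psi.h}=\tilde\psi.\tilde h$, that $\tilde\psi$ restricts to an element of $[[E\rest A]]$ on $A\cap\tilde\psi^{-1}(A)=A\cap\dom\tilde\psi$, and then uses $[[F]]$-equivariance of $\Phi_\pi$ together with $\mu_{\psi(y)}(A)=\mu_y(A)$.

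The main obstacle is making this equivariance computation rigorous with partial injections. In particular, one must verify $\Phi_\pi(\tilde\psi.g)=\psi.\Phi_\pi(g)$ on the image, which is exactly the definition of relative measure preservation, and check that the normalization factor is $F$-invariant.

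With these two steps in hand, amenability of $E\rest A$ yields amenability of $F\rest B$ on a non-null invariant set, contradicting the assumption that $F$ is $\nu$-nowhere amenable.
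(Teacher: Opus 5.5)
Your proposal is correct and is essentially the paper's proof: in (a) your $N(h)(y)=\mu_y(A)^{-1}\Phi_\pi(\chi_A M(\tilde h))(y)$ is exactly the paper's $P_B(h)=\Phi_\pi(P_A(\tilde h 1_{E\rest A}))/\Phi_\pi(1_A)$, with $B$ the support of the $F$-invariant function $y\mapsto\mu_y(A)=\Phi_\pi(1_A)(y)$. In (b) you use the same factorization $\rho_E=\rho_F\circ(\pi\times\pi)$ together with \cref{smooth=rho-finite}; the paper just argues directly that $\rho_E^x([x]_E)=\rho_F^{\pi(x)}([\pi(x)]_F)=\infty$ for a.e.\ $x$, and the paper's extra step in (a) of shrinking $A$ so that $\Phi_\pi(1_A)$ is bounded away from $0$ is not needed in your version, since positivity and unitality of $M$ already give $|\Phi_\pi(\chi_A M(\tilde h))|\le\|h\|_\infty\,\Phi_\pi(1_A)$.
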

\begin{proof}
For \labelcref{relatively-mp_ext:amenability}, towards the contrapositive, suppose that $A \subseteq X$ is a non-null $E$-invariant set on which $E$ is amenable, as witnessed by the unital, positive, $[[E \rest{A}]]$-equivariant linear map $P_A : L^{\infty}(E \rest{A}) \to L^{\infty}(A)$.
Because $A$ is $E$-invariant, the $[[F]]$-equivariance of the conditional expectation $\Phi_\pi : L^{\infty}(X ,\mu) \to L^{\infty}(Y,\nu)$ implies that $\Phi_\pi (1_A)$ is $F$-invariant.
Furthermore, after shrinking $A$ if necessary, we may assume that $\Phi _\pi (1_A)$ is bounded away from $0$ on its support, which we denote by $B$.
Let $h \mapsto \tilde{h}\defeq h \circ (\pi \times \pi)$ denote the inclusion of $L^{\infty}(F \rest{B})$ into $L^{\infty}(E\rest{\pi ^{-1}(B)})$.
The map $P_B: L^{\infty}(F \rest{B}) \to L^\infty(B)$ defined by $P_B(h) \defeq \Phi_\pi (P_A (\tilde{h}1_{E\rest{A}}))/\Phi_\pi (1_A)$ then witnesses the amenability of $F \rest{B}$. 

For \labelcref{relatively-mp_ext:smoothness}, let $\rho _E$ and $\rho _F$ denote the Radon--Nikodym cocycles of $E$ and $F$ respectively, so that $\rho _E=\rho _F\circ (\pi\times \pi)$. 
By \cref{smooth=rho-finite}, if $F$ is $\nu$-nowhere smooth then $\rho _F^y([y]_F)=\infty$ for $\nu$-a.e.\ $y\in Y$, hence $\rho _E^x([x]_E)=\rho _F^{\pi(x)}([\pi (x)]_F)=\infty $ for $\mu$-a.e.\ $x\in X$, so $E$ is $\mu$-nowhere smooth.
\end{proof}

\subsection{Treeability in the locally compact setting}
Let $G$ be a locally compact second countable group acting in a Borel way on a standard Borel space $X$. 
We call a Borel $E^X_G$-complete section $Y \subseteq X$ \dfn{locally countable} if its intersection with each $E^X_G$-class is countable.
In this context, locally countable Borel complete sections always exist by \cite{kechris:countable-sections}; earlier it was shown in \cite[Theorem 2.8]{Feldman-Hahn-Moore:orbit_structure} that such sections exist for mcp actions (where we are allowed to discard a $G$-invariant null set).

\begin{prop}\label{Borel_1_amenable_section}
Let $G$ be a locally compact second countable group, let $G\curvearrowright X$ be a Borel action of $G$ on a standard Borel space $X$, and let $Y\subseteq X$ be a locally countable Borel $E_G^X$-complete section.
If $G$ is amenable then $E_G^X\rest{Y}$ is Borel $1$-amenable. 
\end{prop}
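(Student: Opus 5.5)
The plan is to transfer Følner-type (Reiter) functions from $G$ to the countable classes of $Y$. Since $G$ is amenable and locally compact second countable, it admits a sequence $(\phi_k)_{k\in\N}$ of nonnegative compactly supported continuous functions with $\|\phi_k\|_{L^1(G)}=1$ and $\|\lambda_g\phi_k-\phi_k\|_1\to 0$ uniformly on compact subsets of $G$ (Reiter's condition). Using a Borel map $\theta\colon X\to$ ``selector for nearby points of $Y$'' we push these densities forward to probability vectors on $[y]_{E_G^X}\cap Y$.

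Concretely, I would first fix a Borel function $r\colon X\to Y$ with $(x,r(x))\in E_G^X$. The intended choice makes $r$ behave like a local retraction: by \cite{kechris:countable-sections} one can take $Y$ to be a lacunary-type section and shrink it so that there is a neighborhood $U$ of $1_G$ with $Uy\cap Uy'=\emptyset$ for distinct $y,y'\in Y$. In general, however, I will avoid this structural assumption and use only measurability. By Luzin--Novikov applied to the Borel set $\{(x,y)\in X\times Y:(x,y)\in E_G^X\}$, which has countable sections, I get a Borel enumeration and hence a Borel $r$ (e.g.\ the first $y$ in the enumeration). For $y\in Y$ I then define
\[
m^k_y(z)\defeq \int_G \phi_k(g)\,\mathbf 1[r(g^{-1}.y)=z]\,dg,\qquad z\in[y]_{E_G^X}\cap Y .
\]
This is the pushforward of the probability measure $\phi_k\,dg$ under $g\mapsto r(g^{-1}.y)$, so it is a probability vector on the countable set $[y]\cap Y$. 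Borelness of $(y,z)\mapsto m^k_y(z)$ follows from Fubini, since $(g,y)\mapsto r(g^{-1}.y)$ is Borel.

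For the asymptotic invariance, let $y'\in[y]\cap Y$, say $y'=h.y$. Then $m^k_{y'}$ is the pushforward of $\phi_k\,dg$ under $g\mapsto r(g^{-1}h.y)$. Substituting $g=hg'$ shows it equals the pushforward of $\phi_k(hg')\,dg'$ under $g'\mapsto r(g'^{-1}.y)$. Since pushforward under the same map is $\ell^1$-contractive,
\[
\|m^k_{y'}-m^k_y\|_1\le \|\lambda_{h^{-1}}\phi_k-\phi_k\|_{L^1(G)}\to 0 .
\]
The convention on left versus right translation needs checking, and I would choose the Reiter sequence invariant on the appropriate side; the sided version is available in any amenable group, and inversion handles modular issues. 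The limit is taken for the single fixed $h$, so pointwise convergence holds for every pair $(y,y')\in E_G^X\rest{Y}$, as required by Borel 1-amenability.

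The main obstacle is the measurability and well-definedness bookkeeping. First, the choice of $h$ with $y'=h.y$ is not unique when stabilizers are nontrivial, but the bound holds for any such $h$, so this is harmless. Second, $r$ must be Borel on all of $X$ with no null sets discarded; this is exactly where Luzin--Novikov and the Borelness of $E_G^X\cap(X\times Y)$ with countable sections are used. The latter follows because $E_G^X$ is analytic in general, but its restriction here is the projection of $\{(g,x,y):g.x=y,\ y\in Y\}$, which has countable fibers over $(x,y)$-pairs in the relevant sense. So I expect to cite the standard fact, from \cite{kechris:countable-sections}, that $E_G^X\rest{Y}$ and $E_G^X\cap(X\times Y)$ are Borel with countable sections.
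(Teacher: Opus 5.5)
Your proposal is correct and is essentially the paper's own proof. The paper likewise takes a left Reiter sequence $(f_k)$ in $L^1(G,\mu_G)$ and a Borel map $\varphi\colon X\to Y$ with $\varphi(x)\in G.x$ obtained from Luzin--Novikov. It defines $m^k_z(y)=\int_{\{g:\varphi(g^{-1}.z)=y\}}f_k\,d\mu_G$ and leaves the invariance as a ``standard computation.'' Your substitution $g=hg'$, using left-invariance of Haar measure, carries out that computation correctly.

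The one weak point is your justification that $E^X_G\cap(X\times Y)$ is Borel. The projection of $\{(g,x,y):g.x=y\}$ does not have countable fibers, since its fibers are cosets of stabilizers. Borelness instead comes from $\sigma$-compactness of $G$: after a Becker--Kechris change of topology making the action continuous, $E^X_G$ is $F_\sigma$. Luzin--Novikov then applies because the vertical sections of $E^X_G\cap(X\times Y)$ are countable.
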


\begin{proof}
Fix a left Haar measure $\mu _G$ on $G$.
Assuming $G$ is amenable, by \cite[Theorem G.3.1]{Bekka-delaHarpe-Valette} we may find a sequence $(f_k)_{k\in \N}$ of non-negative unit vectors in $L^1(G,\mu _G)$ with $\lim _{k\to\infty} \| g.f_k - f_k \| _1 = 0$ for all $g\in G$. 
By Luzin--Novikov uniformization we may find a Borel retraction $\varphi : X\to Y$ with $\varphi (x)\in G.x$ for all $x\in X$.
For each $k\in \N$ and $z\in Y$ define the probability vector $m^k_z\in \ell ^1([z]_{E^X_G\rest{Y}})$ by  
\[
m_z^k(y)\defeq \int _{\{ g\in G: \varphi (g^{-1}.z)=y \}} f_k (g)\, d\mu_G(g) .
\]
A standard computation verifies that the sequence $(m^k)_{k\in \N}$ provides a witness to Borel $1$-amenability of $E_G^X\rest{Y}$.
\end{proof}

\begin{lemma}\label{lcsc_treeable_permanence}
Let $G$ be a locally compact second countable group and let $G \actson X$ be a Borel action of $G$ on a standard Borel space $X$.

\begin{enumerate}[(a)]
\item\label{complete_sec_treeable} Let $Y \subseteq X$ be any Borel $E_G^X$-complete section.
Then $E^X_G$ is treeable if and only if $E^X_G \rest{Y}$ is treeable.

\item\label{subgroup_treeable} Let $H$ be a closed subgroup of $G$. 
If the action of $G$ on $X$ is free and if $E^X_G$ is treeable then $E^X_H$ is also treeable.
\end{enumerate}
\end{lemma}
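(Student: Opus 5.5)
For \labelcref{complete_sec_treeable}, the plan is to reduce treeability of the non-countable relation $E^X_G$ to treeability of a countable Borel equivalence relation on a locally countable section. (Recall that $E^X_G$ is called treeable when $E^X_G\rest{Y_0}$ is treeable for some, equivalently every, locally countable Borel complete section $Y_0$.) First fix a locally countable Borel complete section $Y_0$, which exists by \cite{kechris:countable-sections}. For a general Borel complete section $Y$, apply \cite{kechris:countable-sections} to the restricted action (or note that $Y\cap$ a locally countable section need not be complete). So instead choose a locally countable Borel complete section $Y_1\subseteq Y$: use Luzin--Novikov style uniformization on the Borel set $\{(y,z): y\in Y_0, z\in Y, z\in G.y\}$. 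Its sections are nonempty but uncountable, so Luzin--Novikov does not apply directly. Instead use the Kechris theorem relativized to $Y$, i.e. \cite{kechris:countable-sections} gives a Borel set $Y_1\subseteq Y$ meeting every orbit that meets $Y$ in a countable nonempty set.

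Then $E^X_G\rest{Y_0}$ and $E^X_G\rest{Y_1}$ are both countable Borel equivalence relations. They are restrictions of the countable Borel equivalence relation $E^X_G\rest{(Y_0\cup Y_1)}$ to complete sections. Treeability of countable Borel equivalence relations passes to restrictions to Borel subsets (\cite{JKL}). Conversely, if $E\rest{A}$ is treeable for a complete section $A$, then $E$ is treeable: pick a Borel retraction $r$ to $A$ via Luzin--Novikov and add edges $x\to r(x)$ for $x\notin A$. This yields treeability of $E^X_G$ if and only if treeability of $E^X_G\rest{Y_1}$. Finally, $E^X_G\rest{Y}$ is treeable exactly when its locally countable section $Y_1$ is. This is just the definition applied to the action's restricted relation, giving \labelcref{complete_sec_treeable}.

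For \labelcref{subgroup_treeable}, the plan is to use freeness to realize $E^X_H$, restricted to a suitable section, as a subrelation of a treeable countable relation. Treeability passes to subrelations of countable Borel equivalence relations in the measure-free Borel setting only after Gaboriau-type results (\cite{JKL} Proposition 3.3 states subrelations of treeable countable Borel equivalence relations are treeable). Concretely, choose a locally countable Borel complete section $Z$ for $E^X_H$, and a locally countable complete section $Y$ for $E^X_G$. Freeness gives a Borel identification of each $G$-orbit with $G$ and of $H$-orbits with cosets $Hg$. Take $W\defeq Y\cup Z$; its intersection with each $G$-orbit need not be countable, since $Z$ meets each of the possibly uncountably many $H$-orbits inside a $G$-orbit. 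This is the main obstacle. To fix it, I would instead take a countable section transversal in the other direction. Choose $Y$ locally countable for $G$ such that $G.Y$-structure is handled by a Borel cocycle $G\times Y$. Then form the free action of $H$ on $X$ with section $Y'\defeq (U.Y)$ for a compact neighborhood $U$, arranging that $Y'$ is an $H$-complete section. Then $E^X_H\rest{Y'}$ is contained in $E^X_G\rest{Y'}$, which is treeable by \labelcref{complete_sec_treeable}. Here each $G$-orbit meets $Y'$ in a countable set, since it is a countable union of images of the $U$-neighborhoods that, by freeness and lacunarity of $Y$ (choosing $Y$ lacunary with respect to $U$), meet each orbit countably. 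If a countable choice of lacunary $Y$ with $HY$ co-meeting every $H$-orbit is problematic, I would instead pass through the diagonal action on $X\times G/H$ and invoke \labelcref{complete_sec_treeable} there. Conclude via subrelation-permanence \cite[Proposition 3.3]{JKL}.
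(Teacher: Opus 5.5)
\textbf{Part (b) has a genuine gap.} Your plan is to realize $E^X_H\rest{Y'}$ inside a \emph{countable} restriction $E^X_G\rest{Y'}$, and then apply subrelation permanence for countable Borel equivalence relations \cite[Proposition 3.3]{JKL}. This cannot work unless $H$ is open. By freeness, each $G$-orbit $G.x$ is the disjoint union of the $H$-orbits $H.(g.x)$, and these are indexed by $H\backslash G$, which is uncountable when $H$ is not open. So every $E^X_H$-complete section meets every $G$-orbit in uncountably many points, whatever $Y'$ you choose. Your choice $Y'=U.Y$ fails even more directly: for nondiscrete $G$ each $U.y$ is uncountable by freeness, and lacunarity only makes these translates disjoint.

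One is therefore forced, as in the paper, to fix a Borel treeing $T$ of $E\defeq E^X_G\rest{Y}$ for a locally countable $H$-section $Y$, and to show directly that $F\defeq E^X_H\rest{Y}$ is treeable. Such a $T$ exists by (a), but $E$ has uncountable classes and $T$ has vertices of uncountable degree. So $T$ admits no Borel $\N$-edge-colouring, and the lexicographic nearest-point choice from the countable case is unavailable. The paper instead considers the relation $\tilde F$ on $C=\set{(x,y)\in E : x\text{ lies in the $T$-convex hull of }[y]_F}$ given by $(x,y)\mathrel{\tilde F}(x,z)\iff y\mathrel{F}z$. It shows $\tilde F$ is smooth via $(x,g.x)\mapsto(x,Hg)$, and this is exactly where freeness is used. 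Smoothness gives a choice of nearest point of $[y]_F$ to $x$ depending only on $x$ and $[y]_F$, after which one slides edges as in \cite{JKL}. Freeness has to enter essentially (cf.\ \cref{counterexample:nonfree}); in your main construction it only enters through the false countability claim.

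\textbf{Part (a) defines away its content.} You take treeability of $E^X_G$, and implicitly of $E^X_G\rest{Y}$ (which is not an orbit relation), to mean treeability of a restriction to a locally countable complete section. That makes (a) nearly tautological. In the paper, treeable means admitting a Borel acyclic graphing, so for $Y$ not locally countable every treeing of $E^X_G\rest{Y}$ has vertices of uncountable degree. This is also the sense in which (a) is invoked in (b).

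Your use of \cite{kechris:countable-sections} to get $Y_1\subseteq Y$, and your Luzin--Novikov retraction, match the paper's backward direction. The forward direction is missing: passing from a Borel treeing $T$ of $E^X_G\rest{Y}$ to a treeing of $E^X_G\rest{Y_1}$ is the edge-sliding argument of \cite[Lemma 1.1]{Hjorth:non-treeability}. For instance, geodesics in $T$ are unique, so by Luzin--Souslin and Luzin--Novikov the $T$-convex hull of $Y_1$ is Borel. Then $T$ restricted to that hull is a locally countable treeing of a countable Borel equivalence relation, which one restricts to $Y_1$ via \cite{JKL}.

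\textbf{Your fallback for (b).} Once (a) is established in this form, your one-line fallback can be completed into a correct proof of (b) that differs from the paper's:
\begin{itemize}
\item By freeness, the projection $X\times G/H\to X$ maps each orbit of the diagonal $G$-action bijectively onto a $G$-orbit. So a treeing of $E^X_G$ pulls back to a treeing of $E^{X\times G/H}_G$.
\item $E^{X\times G/H}_G\rest{X\times\set{H}}\cong E^X_H$, and $X\times\set{H}$ is a complete section. So (a) gives treeability of $E^X_H$, with no subrelation permanence needed.
\end{itemize}
As written, though, none of these steps appears in the proposal.
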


\begin{proof}
For \labelcref{complete_sec_treeable}, by \cite{kechris:countable-sections}, we may find a locally countable Borel complete section $Y_0$ of $E^X_G$ that is contained in $Y$.
Thus, it is enough to prove that $E^X_G \rest{Y}$ is treeable if and only if $E^X_G \rest{Y_0}$ is treeable.
The forward direction of this is proved by an edge-sliding argument in \cite[Lemma 1.1]{Hjorth:non-treeability}, and the backward direction follows from Luzin--Novikov uniformization, as in \cite[(iv) of Proposition 3.3]{JKL}.

For \labelcref{subgroup_treeable}, let $Y \subseteq X$ be a locally countable Borel complete section for $E^X_H$.
By part \labelcref{complete_sec_treeable}, $E \defeq E^X_G \rest{Y}$ is treeable and we will be done if we show that $F \defeq E^X_H \rest{Y}$ is treeable.
Fix a Borel treeing $T$ of $E$ and similarly to \cite[Lemma 1.1]{Hjorth:non-treeability}, we consider the set
\[
C \defeq \set{(x,y) \in E : x \text{ is in the $T$-convex hull of } [y]_F}.
\]
This set is Borel by Luzin--Novikov uniformization, being the projection of the set of all triples $(x,y,z)$ of $E$-related points such that $(y,z) \in F$ and $x$ lies on the geodesic $[y,z]_T$.
Let $\tilde{F}$ be the countable Borel equivalence relation on $C$ defined by $[(x,y)]_{\tilde{F}}\defeq \set{(x, z) \in C : (y,z) \in F}$.
This equivalence relation is smooth as witnessed by the map $C \to Y \times (H\backslash G )$, $(x, g x) \mapsto (x,Hg)$, which is well-defined by the freeness of the action of $G$ on $X$.
From this and Luzin--Novikov uniformization we obtain a Borel function $f : C \to Y$ such that $f(x,y)$ is a point in $[y]_F$ of minimum $T$-distance to $x$ among the points in $[y]_F$ and $f(x,y) = f(x,z)$ whenever $(y,z) \in F$.
Now one proceeds as in the edge-sliding argument in the proof of \cite[(iii) Proposition 3.3]{JKL}.
\end{proof}

\begin{remark}\label{counterexample:nonfree}
The assumption of freeness of the action of $G$ is necessary in \cref{lcsc_treeable_permanence}\labelcref{subgroup_treeable}.
For example, for $i \in \set{ 0,1 }$ let $G_i$ be a copy of $\mathrm{PSL}_2(\R)$, let $\Gamma _i\leq G_i$ be a lattice, let $\Gamma$ be an irreducible lattice in the product $G=G_0\times G_1$, and let $\mu$ be the $G$-invariant Borel probability measure on $G/\Gamma$.
Then the orbit equivalence relation of $G$ acting on $G/\Gamma$ is (trivially) treeable since the action is transitive, but we claim that the orbit equivalence relation of the action $\Gamma _0\times \Gamma _1 \curvearrowright G/\Gamma$ is not treeable (even if we are allowed to discard a $\mu$-null set).
This follows from \cref{Hjorth_nontreeable} below, since a simple argument, using that each $G_i$ has the Howe--Moore property and acts ergodically on $(G/\Gamma ,\mu )$ and each $\Gamma _i$ has trivial center, shows that the action of $\Gamma _0\times \Gamma _1$ on $(G/\Gamma, \mu)$ is $\mu$-essentially free.


\end{remark}

\subsection{Nontreeability and anti-treeable configurations}

Let $G\curvearrowright (X,\mu )$ be an mcp action of a locally compact second countable group $G$ on a standard probability space $(X,\mu )$, and let $Y\subseteq X$ be a locally countable Borel $E_G^X$-complete section. 
By \cite[Theorem 6.17]{Ramsay:virtual_groups_and_actions}, $Y$ comes equipped with an $E_G^X\rest{Y}$-invariant measure class (of $\sigma$-finite Borel measures) having the same $E_G^X$-invariant null sets as $\mu$. 
This measure class is easily seen to be unique, and we call it \dfn{the measure class on $Y$ associated to $(E^X_G, \mu )$}.
Note that if the action of $G$ is pmp and $G$ is nonunimodular, then the measure class on $Y$ associated to $(E^X_G, \mu )$ may not contain an $E^X_G \rest{Y}$-invariant $\sigma$-finite Borel measure.

\begin{defn}
Let $E$ be an mcp countable Borel equivalence relation on a standard probability space $(Y,\nu )$. 
We call a pair $(Q,R)$ of Borel subrelations of $E$ an \dfn{anti-treeable configuration} for $E$ if the following conditions hold:
\begin{enumerate}[(i)]
\item $Q$ and $R$ are both $\nu$-amenable,
\item $Q\cap R$ is $\nu$-nowhere smooth,
\item the subrelation $Q \vee R$ generated by $Q$ and $R$ is $\nu$-nowhere amenable.
\end{enumerate}
By \cref{nowhere_smooth_intersection}, if $E$ has an anti-treeable configuration then $E$ is $\nu$-nowhere treeable.
We say that a locally compact second countable group $G$ \dfn{admits anti-treeable configurations} if there exists a free \emph{pmp} action of $G$ on a standard probability space $(Z,\eta )$, along with a locally countable Borel $E_G^Z$-complete section $Y$ such that $E_G^Z\rest{Y}$ has some anti-treeable configuration $(Q,R)$ as above (where $\nu$ is any probability measure from the measure class on $Y$ associated to $(E_G^Z, \eta )$). 
If a witness $(Q,R)$ to $G$ admitting anti-treeable configurations exists with the additional property that both $Q$ and $R$ are ergodic, then we say that $G$ \dfn{admits ergodic anti-treeable configurations}.
\end{defn}

\begin{prop}\label{configuration_implies_nontreeability}
Let $G$ be a locally compact second countable group that contains a closed subgroup $H$ that admits anti-treeable configurations.
Then for every free pmp action $G\curvearrowright (X,\mu )$ of $G$, the orbit equivalence relation $E_G^X$ is $\mu$-nowhere treeable.
\end{prop}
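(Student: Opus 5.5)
The plan is to argue by contradiction, moving between a free pmp action of $G$ and the given free pmp action of $H$ through a diagonal product action. Suppose $E_G^X$ is not $\mu$-nowhere treeable. Then there is a non-null $E_G^X$-invariant Borel set $A$ on which $E_G^X\rest{A}$ is treeable. Replacing $\mu$ by $\mu_A$, the action $G\actson (A,\mu_A)$ is still free and pmp. By \cref{lcsc_treeable_permanence}\labelcref{subgroup_treeable}, $E_H^A$ is treeable.

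Fix the data witnessing that $H$ admits anti-treeable configurations: a free pmp action $H\actson(Z,\eta)$, a locally countable Borel complete section $Y\subseteq Z$, and an anti-treeable configuration $(Q,R)$ for $E_H^Z\rest{Y}$ with respect to a measure $\nu$ in the associated class. Let $H$ act diagonally on $(A\times Z,\mu_A\times\eta)$; this action is free and pmp. Also fix a locally countable Borel complete section $Y_A\subseteq A$ for $E_H^A$. Both $Y_A\times Z$ and $A\times Y$ are Borel $E_H^{A\times Z}$-complete sections, and both are locally countable. For example, $h.(a,z)\in Y_A\times Z$ forces $h.a\in Y_A$, and by freeness this allows only countably many $h$.

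The two sections play opposite roles.
\begin{enumerate}[(1)]
\item The first coordinate projection $Y_A\times Z\to Y_A$ makes $E_H^{A\times Z}\rest{Y_A\times Z}$ a class-bijective extension of $E_H^A\rest{Y_A}$; class-bijectivity uses freeness of the action on $A$. A treeing of $E_H^A\rest{Y_A}$ pulls back to a treeing. Here $E_H^A\rest{Y_A}$ is treeable by \cref{lcsc_treeable_permanence}\labelcref{complete_sec_treeable}, since $E_H^A$ is. Applying \cref{lcsc_treeable_permanence}\labelcref{complete_sec_treeable} twice more, first $E_H^{A\times Z}$ is treeable and then so is its restriction $E'\defeq E_H^{A\times Z}\rest{A\times Y}$.
\item The second coordinate projection $\pi:A\times Y\to Y$ makes $E'$ a class-bijective extension of $E_H^Z\rest{Y}$. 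I claim it is relatively measure preserving for suitable measures in the associated classes. Take $\nu'\defeq\mu_A\times\nu$ on $A\times Y$. Since $H$ preserves $\mu_A$, the Radon--Nikodym cocycle of $E'$ should be $\rho_\nu\circ(\pi\times\pi)$, which gives relative measure preservation by the Proposition preceding \cref{relatively-mp_ext}.
\end{enumerate}

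Given this, let $Q'\defeq(\pi\times\pi)^{-1}(Q)$ and $R'\defeq(\pi\times\pi)^{-1}(R)$. I will check that $(Q',R')$ is an anti-treeable configuration for $E'$.
\begin{enumerate}[(1)]
\item $Q'$ and $R'$ are amenable, because they are class-bijective extensions of amenable relations.
\item $Q'\cap R'$ is the pullback of $Q\cap R$, so it is nowhere smooth by \cref{relatively-mp_ext}\labelcref{relatively-mp_ext:smoothness}.
\item By class-bijectivity, $Q'\vee R'$ is exactly the pullback of $Q\vee R$, so it is nowhere amenable by \cref{relatively-mp_ext}\labelcref{relatively-mp_ext:amenability}.
\end{enumerate}
Relative measure preservation restricts to these subrelations because their cocycles are restrictions of that of $E'$. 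By \cref{nowhere_smooth_intersection}, $E'$ is then $\nu'$-nowhere treeable, which contradicts its treeability from the first step.

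The main obstacle is the measure bookkeeping in the second step. One must verify that $\mu_A\times\nu$ is in the measure class on $A\times Y$ associated to $(E_H^{A\times Z},\mu_A\times\eta)$, and that its Radon--Nikodym cocycle genuinely factors through $\pi$. I would do this by characterizing the associated class via Ramsay's construction, namely through its invariant null sets and a local cross-section description. Then I would compute the cocycle on partial injections $\tilde\psi$ lifted from $\psi\in[[E_H^Z\rest{Y}]]$, using Fubini and the invariance of $\mu_A$.
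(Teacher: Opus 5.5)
Your proof is correct and follows essentially the paper's argument: you pass from $G$ to $H$ via \cref{lcsc_treeable_permanence}\labelcref{subgroup_treeable}, take the diagonal product with the $H$-action on $Z$ that carries the configuration, lift treeability along the first-coordinate projection, and pull $(Q,R)$ back along the relatively measure preserving class-bijective second-coordinate projection to contradict \cref{nowhere_smooth_intersection}. The only difference is that you spell out steps the paper only asserts, namely the transfer of treeability through the complete section $Y_A\times Z$ and the Fubini/invariance computation showing that the cocycle of $\mu_A\times\nu$ factors through $\pi$ (note that $Q'$ and $R'$ should be pullbacks under $(\pi\times\pi)\rest{E'}$, as the paper writes).
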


\begin{proof}
By the contrapositive of \cref{lcsc_treeable_permanence}\labelcref{subgroup_treeable}, it is enough handle the case when $G=H$, i.e., when $G$ itself admits anti-treeable configurations. 

Thus, there exists a free pmp action $G\curvearrowright (Z,\eta )$ of $G$, along with a locally countable Borel complete section $Y\subseteq Z$ for $E\defeq E_G^Z$, and an anti-treeable configuration $(Q,R)$ for $F\defeq E\rest{Y}$; here $Y$ is equipped with a Borel probability measure $\nu$ from the measure class on $Y$ associated to $(E, \eta )$.  
Given a free pmp action $G\curvearrowright (X,\mu )$, consider the product action of $G$ on $(\tilde{Z},\tilde{\eta})\defeq (X\times Z, \mu\times\eta )$ given by $g.(x,z)\defeq (g.x,g.z)$, and let $\tilde{E}$ denote the orbit equivalence relation of this action. 
The projection map $\tilde{Z}\to X$, $(x,z)\mapsto x$, is a class-bijective homomorphism from $\tilde{E}$ to $E^X_G$, so if $E^X_G$ were $\mu$-somewhere treeable, then $\tilde{E}$ would be $\tilde{\eta}$-somewhere treeable.
It therefore suffices to show that $\tilde{E}$ is $\tilde{\eta}$-nowhere treeable.

The set $\tilde{Y}\defeq X\times Y$ is a locally countable Borel complete section for $\tilde{E}$ and,
letting $\tilde{F}\defeq \tilde{E}\rest{\tilde{Y}}$, we see that the measure $\tilde{\nu}\defeq\mu\times\nu$ belongs to the measure class on $\tilde{Y}$ associated to $(\tilde{E}, \tilde{\eta})$. 
Moreover, $\tilde{F}$ is a relatively measure preserving class-bijective extension of $F$ via the projection map $\pi :\tilde{Y}\to Y$ to $Y$. 

Thus, the $\nu$-amenable subrelations $Q$ and $R$ of $F$ pull back under $(\pi\times \pi)\rest{\tilde{F}}$ to $\tilde{\nu}$-amenable subrelations $\tilde{Q}$ and $\tilde{R}$ of $\tilde{F}$, whose intersection is $\tilde{\nu}$-nowhere smooth, and which together generate a $\tilde{\nu}$-nowhere amenable subrelation by \cref{relatively-mp_ext}.
Hence $\tilde{F}$ is $\tilde{\nu}$-nowhere treeable by \cref{nowhere_smooth_intersection}.
It now follows that $\tilde{E}$ is $\tilde{\eta}$-nowhere treeable by \cref{lcsc_treeable_permanence}\labelcref{complete_sec_treeable}.
\end{proof}

\begin{theorem}\label{product_configuration}
Let $G_0$ and $G_1$ be locally compact second countable groups with $G_0$ nonamenable and $G_1$ noncompact.
Then $G_0\times G_1$ admits ergodic anti-treeable configurations.
\end{theorem}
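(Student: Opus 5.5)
We need a free pmp action of $G=G_0\times G_1$ on some $(Z,\eta)$, a locally countable complete section $Y$, and amenable ergodic subrelations $Q,R$ of $E_G^Z\rest{Y}$ whose intersection is nowhere smooth and whose join is nowhere amenable. The plan is to realize everything inside a product of two actions. Let $G_0\actson(Z_0,\eta_0)$ be a free pmp action; choose it mixing/ergodic, e.g.\ a Gaussian or Bernoulli-type action. Let $G_1\actson(Z_1,\eta_1)$ be a free pmp ergodic action whose restriction to a suitable cyclic or amenable structure stays ergodic. Put $Z\defeq Z_0\times Z_1$ with the product action of $G_0\times G_1$, and take $Y\defeq Y_0\times Y_1$ with $Y_i$ a locally countable complete section for $E_{G_i}^{Z_i}$. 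Then $E_G^Z\rest{Y}=E_0\times E_1$, where $E_i\defeq E_{G_i}^{Z_i}\rest{Y_i}$, and the associated measure class on $Y$ is the product $\nu_0\times\nu_1$ of the associated classes.

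Next I would analyze the two factors. Since $G_1$ is noncompact, $E_1$ is $\nu_1$-nowhere smooth; a smooth $E_1$ would force the action to be essentially proper, contradicting ergodicity with infinite orbits. I would then pick an amenable, ergodic, nowhere smooth subrelation $S\subseteq E_1$, for instance a hyperfinite ergodic subrelation, obtained via the Connes--Feldman--Weiss/Krieger theory in the mcp setting or by choosing inside $E_1$ a single aperiodic automorphism that is ergodic. Since $G_0$ is nonamenable, $E_0$ is nowhere amenable. A free pmp action of a nonamenable lcsc group gives a nonamenable cross-section relation, because amenability of $E_0$ together with the Borel $1$-amenable section argument in the converse direction would make $G_0$ amenable; here I would use Zimmer's characterization. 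I would choose two amenable ergodic subrelations $A,B\subseteq E_0$ generating a nowhere amenable relation. For example, take $A\vee B=E_0$ using that every ergodic relation is generated by two hyperfinite ones, or else take two generators of a free subgroup-like structure; ergodicity of $A$ and $B$ can be arranged by choosing $Z_0$ suitably.

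Then set $Q\defeq A\times S$ and $R\defeq B\times S$. These are products of amenable relations, hence amenable, and they are ergodic since $A$, $B$ and $S$ are ergodic and the measures are product measures. Their intersection contains $\Delta_{Y_0}\times S$, which is nowhere smooth because $S$ is. Finally, $Q\vee R\supseteq (A\vee B)\times\Delta_{Y_1}$, and $Q\vee R$ is a class-bijective-type extension over the nowhere amenable $A\vee B$; nowhere amenability of $Q\vee R$ would follow by projecting to the first coordinate using \cref{relatively-mp_ext}, applied to $(A\vee B)\times S$ as an extension of $A\vee B$ with the fiber relation $S$ amenable. The direction actually needed is that amenability of $(A\vee B)\times S$ implies amenability of $A\vee B$, and this follows from the fact that a product is amenable iff each factor is, by conditional expectation onto the first coordinate.

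The main obstacle I expect is producing ergodic amenable $A,B\subseteq E_0$ with $A\vee B$ nowhere amenable. The cleanest route is the known fact that any ergodic mcp countable relation is generated by two hyperfinite ergodic subrelations; then $A\vee B=E_0$, which is nowhere amenable. Ergodicity of $E_0$ is automatic from ergodicity of the $G_0$-action.
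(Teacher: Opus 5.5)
Your construction is the paper's: free ergodic pmp actions of $G_0$ and $G_1$, the section $Y=Y_0\times Y_1$, an ergodic amenable nowhere smooth $S\subseteq F_1$, two ergodic amenable $A,B\subseteq F_0$ with nowhere amenable join, and $Q=A\times S$, $R=B\times S$. Then $Q\cap R\supseteq\Delta_{Y_0}\times S$ and $Q\vee R=(A\vee B)\times S$.

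A minor point in your last step: \cref{relatively-mp_ext} does not apply, because $Y_0\times Y_1\to Y_0$ is not class-bijective for $(A\vee B)\times S$. Your direct argument does work. Compose an equivariant mean for $(A\vee B)\times S$ with integration over $Y_1$; this shows $A\vee B$ would be amenable. Ergodicity of $(A\vee B)\times S$ then turns ``not amenable'' into ``nowhere amenable''.

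\textbf{The gap} is in the step you flag as the main obstacle: producing ergodic amenable $A,B\subseteq F_0$ whose join is nowhere amenable.

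The ``known fact'' you cite, that every ergodic mcp relation is generated by two hyperfinite subrelations, is false. A join of two hyperfinite pmp relations has cost at most $2$ (Levitt, plus subadditivity of cost). The orbit relation of a free ergodic pmp $\F_3$-action has cost $3$ (Gaboriau). This is exactly the case $G_0=\F_3$, $Y_0=Z_0$.

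Your fallback is not an argument as written. It speaks of ``two generators of a free subgroup-like structure'', with ergodicity ``arranged by choosing $Z_0$''. But a nonamenable lcsc group need not contain free subgroups. Moreover, the real difficulty is making \emph{both} pieces ergodic.

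The paper states the existence of such $Q_0,R_0$ without proof, but it does need an argument. Here is one that uses only tools already in the paper.
\begin{enumerate}
\item Fix an ergodic hyperfinite $H\subseteq F_0$ and $\phi_1,\phi_2,\ldots\in[F_0]$ with $F_0=H\vee\bigvee_i E_{\phi_i}$.
\item Increasing unions of amenable relations are amenable, and $F_0$ is not amenable. So there is a least $n$ with $H\vee E_{\phi_1}\vee\dots\vee E_{\phi_n}$ nonamenable. Put $C_0\defeq H\vee E_{\phi_1}\vee\dots\vee E_{\phi_{n-1}}$ and $\phi\defeq\phi_n$.
\item Define $C_{k+1}\defeq C_k\vee(\phi^{\epsilon_k}\times\phi^{\epsilon_k})(C_k)$, with $\epsilon_k=\pm1$ alternating.
\item Suppose every $C_k$ were amenable. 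Then $C_\infty\defeq\bigcup_k C_k$ is amenable and normalized by $\phi$. The $C_\infty$-classes inside a $(C_\infty\vee E_\phi)$-class are the $[\phi^j x]_{C_\infty}$ for $j\in\Z$. Hence $C_\infty$ is Borel 1-coamenable in $C_\infty\vee E_\phi$: push forward the uniform measure on $\{-k,\dots,k\}$. Now \cref{amenable-coamenable,2-amen_implies_mu-amen} make $C_\infty\vee E_\phi\supseteq H\vee E_{\phi_1}\vee\dots\vee E_{\phi_n}$ amenable, a contradiction.
\item So some $C_{k+1}$ is nonamenable; take the first one. Then $C_k$ and its conjugate $(\phi^{\epsilon_k}\times\phi^{\epsilon_k})(C_k)$ are amenable. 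They are also ergodic: $C_k\supseteq H$, and the conjugate is isomorphic to $C_k$ via the nonsingular map $\phi^{\epsilon_k}$.
\item Their join contains $H$, so it is ergodic. Being nonamenable and ergodic, it is nowhere amenable.
\end{enumerate}
Use this pair as your $A,B$, and the rest of your argument goes through.
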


\begin{proof}
For each $i\in \{ 0,1\}$ fix a free ergodic pmp action of $G_i$ on a standard probability space $(Z_i,\eta _i )$, let $E_i$ denote the orbit equivalence relation of this action, and
let $Y_i \subseteq Z_i$ be a locally countable Borel complete section for $E_i$. 
Let $F_i\defeq E_i\rest{Y_i}$, 
and let $\nu _i$ be a probability measure in the measure class on $Y_i$ associated to $(E_i,\eta _i)$.  

Since each $G_i$ is noncompact and the action on $(Z_i, \eta _i)$ is free, ergodic, and pmp, the relation $F_i$ is $\nu _i$-nowhere smooth and ergodic. 
Similarly, since $G_0$ is nonamenable, the relation $F_0$ is $\nu _0$-nowhere amenable \cite[Corollary 16]{Connes-Feldman-Weiss}.
We may therefore find two ergodic amenable subrelations $Q_0$ and $R_0$ of $F_0$ that together generate a $\nu _0$-nowhere amenable subrelation $Q_0\vee R_0$ of $F_0$. 
We may also find a $\nu _1$-nowhere smooth, $\nu _1$-amenable, ergodic subrelation $S_1$ of $F_1$. 

The product $Y\defeq Y_0\times Y_1$ is a locally countable Borel complete section for the equivalence relation $E\defeq E_0\times E_1$ generated by the product action of $G_0\times G_1$ on $(Z,\eta )\defeq (Z_0\times Z_1 , \eta _0\times\eta _1)$.
Then $Q\defeq Q_0\times S_1$ and $R\defeq R_0\times S_1$ are ergodic amenable subrelations of $F\defeq F_0\times F_1$ whose intersection is nowhere smooth for the product measure $\nu\defeq\nu_0\times\nu_1$, and $Q$ and $R$ together generate the $\nu$-nowhere amenable subrelation $(Q_0\vee R_0)\times S_1$ of $F$. 
Thus, the pair $(Q,R)$ witnesses that $G$ admits ergodic anti-treeable configurations.
\end{proof}

\begin{theorem}\label{normal_configuration}
Let $G$ be a nonamenable locally compact second countable group containing a closed amenable normal subgroup $H$ that is noncompact.
Then $G$ admits ergodic anti-treeable configurations.
\end{theorem}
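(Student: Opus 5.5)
Fix a free ergodic pmp action of $G$ on a standard probability space $(Z,\eta)$, write $E\defeq E^Z_G$, fix a locally countable Borel $E$-complete section $Y\subseteq Z$, and let $F\defeq E\rest{Y}$ with a probability measure $\nu$ from the associated measure class. Since $G$ is nonamenable and the action is free and pmp, $F$ is $\nu$-nowhere amenable \cite[Corollary 16]{Connes-Feldman-Weiss}. The strategy is to let both $Q$ and $R$ contain a common ``$H$-part'' $S$, an ergodic nowhere smooth amenable subrelation coming from the $H$-orbits. We then enlarge $S$ in two amenable ways whose join is all of (a nowhere amenable part of) $F$. The normality of $H$ is what makes the enlargements amenable.

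First we choose $Y$ well. Take $Y$ to be a locally countable complete section for $E^Z_H$ (hence also for $E$), and set $S_0\defeq E^Z_H\rest{Y}$. This is Borel $1$-amenable by \cref{Borel_1_amenable_section}, applied to the amenable group $H$. It is also $\nu$-nowhere smooth, since $H$ is noncompact and acts freely preserving $\eta$; here we argue via $\rho$-infiniteness of classes as in \cref{smooth=rho-finite}. We would like $S_0$ to be ergodic, but it need not be. So we instead pick inside $S_0$ an ergodic hyperfinite nowhere smooth subrelation $S$, or else accept nonergodic pieces and handle ergodicity at the end by working on ergodic components of a suitable larger relation.

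Next we build the two enlargements. Since $G/H$ is nonamenable (as $H$ is amenable and $G$ is not), the idea is to pick two elements, or two amenable closed subgroups, of $G$ whose images generate a nonamenable subgroup of $G/H$. For instance, choose $g_0,g_1\in G$ and let $Q$ be the subrelation of $F$ induced by the closed subgroup $\overline{\langle g_0\rangle H}$, and $R$ the one induced by $\overline{\langle g_1\rangle H}$. By normality, $\overline{\langle g_i\rangle H}$ is an extension of the abelian group $\overline{\langle g_iH\rangle}$ by $H$, hence amenable. So $Q$ and $R$ are Borel $1$-amenable by \cref{Borel_1_amenable_section}, applied to the restriction of these subgroup orbit relations to $Y$. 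Their intersection contains $S_0$, so it is nowhere smooth. Their join is induced by the group generated by $g_0,g_1,H$. If $g_0H,g_1H$ generate a nonamenable subgroup of $G/H$, then the closure $K$ of $\langle g_0,g_1\rangle H$ is nonamenable, and $Q\vee R$ is $E^Z_K\rest{Y}$ up to null sets, which is nowhere amenable by \cite[Corollary 16]{Connes-Feldman-Weiss}.

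The main obstacle is producing such $g_0,g_1$: a nonamenable lcsc group need not contain a discrete free subgroup, nor even two elements generating a nonamenable subgroup. I would instead replace cyclic subgroups by amenable closed subgroups $A_0,A_1\le G/H$ generating a nonamenable closed subgroup. Alternatively, avoid the group structure altogether: pass to the quotient relation $F/S_0$, which is induced by $G/H$ acting on $Z/H$, a nowhere amenable setting. There we pick ergodic amenable $Q'_0,R'_0$ generating a nowhere amenable subrelation, exactly as in the proof of \cref{product_configuration}, and pull them back to $Q\defeq Q'_0\vee S_0$ and $R\defeq R'_0\vee S_0$. The pullbacks are amenable by combining amenability of $S_0$ with coamenability, via \cref{amenable-coamenable} and \cref{2-amen_implies_mu-amen}. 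Their join is nowhere amenable by a relative measure preservation argument as in \cref{relatively-mp_ext}. Ergodicity of $Q$ and $R$ then follows from ergodicity of $Q'_0$ and $R'_0$ together with ergodicity of $H$ on fibers, which may require choosing the action as a product with a mixing action.
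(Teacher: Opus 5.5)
Your final paragraph has the same architecture as the paper's proof. You take the $H$-relation as a common nowhere smooth, Borel $1$-amenable piece, and pull back two ergodic amenable subrelations of a nowhere amenable $G/H$-relation. Amenability of the pullbacks comes from Borel $1$-coamenability via \cref{amenable-coamenable} and \cref{2-amen_implies_mu-amen}, and you descend a mean to get nowhere amenability of the join. You were right to abandon the $\overline{\langle g_i\rangle H}$ approach.

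However, there is a genuine gap: the step that makes this rigorous is missing. For an arbitrary free ergodic pmp action $G\actson(Z,\eta)$, the quotient ``$Z/H$'' is not a standard Borel space, precisely because $S_0=E^Z_H\rest{Y}$ is nowhere smooth. So ``$F/S_0$'' is not a countable Borel equivalence relation on a standard probability space. There is nothing to which you can apply \cite[Corollary 16]{Connes-Feldman-Weiss}, and nowhere to choose $Q_0',R_0'$. The measure-theoretic substitute, the factor of $H$-invariant sets, does not rescue this. It is trivial exactly when $H$ acts ergodically on $Z$, which is the condition you want for ergodicity. For a general action there is also no reason $G/H$ acts freely on it.

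The paper fixes this by choosing the action. Take a free ergodic pmp action of $G/H$ on $(Z_0,\eta_0)$, inflated to $G$ so that $H$ acts trivially. Take a free ergodic pmp action of $G$ on $(Z_1,\eta_1)$ on which $H$ acts ergodically, and let $G$ act diagonally on $Z_0\times Z_1$. With $Y=Y_0\times Y_1$, the projection $p_0:Y\to Y_0$ is a genuine Borel map, and $\tilde p_0^{-1}(\Delta_{Y_0})=\Delta_{Y_0}\times F_1$ is exactly the $H$-relation. The relation $F_0=E^{Z_0}_{G/H}\rest{Y_0}$ is then a standard model of your ``$F/S_0$''. One sets $Q\defeq\tilde p_0^{-1}(Q_0)$ and $R\defeq\tilde p_0^{-1}(R_0)$.

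Two further corrections. First, \cref{relatively-mp_ext} does not apply to $p_0$. It concerns class-bijective extensions, whereas $p_0$ collapses each infinite $\Delta_{Y_0}\times F_1$-class to a point. The paper argues instead as in \cref{claim:nowhere_amenable}. If $P$ is an equivariant mean for $S=\tilde p_0^{-1}(S_0)$, then $P(h\circ\tilde p_0)$ is $\Delta_{Y_0}\times F_1$-invariant. By ergodicity of $F_1$ it depends only on $y_0$, which yields a mean for $S_0$. So ergodicity of $H$ on the fiber factor $Z_1$ is needed for nowhere amenability of $Q\vee R$, not only for ergodicity of $Q$ and $R$.

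Second, your fallback of choosing an ergodic subrelation $S$ inside a nonergodic $S_0$ is impossible. Every $S_0$-invariant set is $S$-invariant.
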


\begin{proof}
Fix a free ergodic pmp action of $G/H$ on $(Z_0,\eta _0)$, which we also view as an action of $G$ (having kernel $H$), and fix also a free ergodic pmp action of $G$ on $(Z_1,\eta _1)$ for which the subgroup $H$ acts ergodically. 
Let $G$ act on $(Z,\eta )\defeq (Z_0\times Z_1,\eta _0\times \eta _1)$ diagonally.

Put $G_0\defeq G/H$ and $G_1\defeq H$.
For $i\in \{ 0, 1 \}$ let $E_i$ denote the orbit equivalence relation of the action of $G_i$ on $Z_i$, let $Y_i\subseteq Z_i$ be a locally countable Borel complete section for $E_i$, let $F_i\defeq E_i\rest{Y_i}$, and let $\nu _i$ be a probability measure in the measure class on $Y_i$ associated to $(E_i,\eta _i )$; it follows that $\nu _i$ is $F_i$-ergodic.
Then $Y\defeq Y_0\times Y_1$ is a locally countable Borel $E_G^Z$-complete section and the product measure $\nu\defeq\nu_0\times\nu_1$ is in the measure class on $Y$ associated to $(E_G^Z,\eta )$.

Since $G_1$ is amenable the relation $F_1$ is Borel $1$-amenable by \cref{Borel_1_amenable_section}, and since the action on $(Z_1, \eta _1)$ is free and pmp, the relation $F_1$ is $\nu _1$-nowhere smooth. 
Similarly, since $G_0$ is nonamenable, the relation $F_0$ is $\nu _0$-nowhere amenable \cite[Corollary 16]{Connes-Feldman-Weiss}.
We may therefore find two ergodic amenable subrelations $Q_0$ and $R_0$ of $F_0$ that together generate a $\nu _0$-nowhere amenable subrelation $Q_0\vee R_0$ of $F_0$. 

Let $F\defeq E_G^Z\rest{Y}$, let $p_0 : Y\to Y_0$ denote the projection map to $Y_0$, and let $\tilde{p}_0:F\to F_0$ denote the restriction of $p_0\times p_0$ to $F$.
Let $\Delta _{Y_0}$ denote the equality relation on $Y_0$, and observe that the relation $\tilde{p}_0^{-1}(\Delta _{Y_0})=\Delta _{Y_0}\times F_1$ is Borel 1-amenable and $\nu$-nowhere smooth.

\begin{claim}
The equivalence relations $Q\defeq \tilde{p}_0^{-1}(Q_0)$ and $R\defeq \tilde{p}_0^{-1}(R_0)$ on $(Y,\nu )$ are ergodic and $\nu$-amenable.
\end{claim}

\begin{pf}
Ergodicity of $Q$ and $R$ follows immediately from ergodicity of $F_1$ on $(Y_1,\nu _1)$ and ergodicity of $Q_0$ and $R_0$ on $(Y_0,\nu _0)$.
After discarding a $\nu_0$-null set from $Y_0$, the relation $Q_0$ is Borel 1-amenable, which implies that $\Delta _{Y_0} \times F_1$ is Borel 1-coamenable in $Q$.
Hence $Q$ is $\nu$-amenable by \cref{amenable-coamenable,2-amen_implies_mu-amen}.
Likewise, $R$ is $\nu$-amenable.
\end{pf}

\begin{claim}\label{claim:nowhere_amenable}
Let $S_0$ be a subrelation of $F_0$ and assume that the relation $S\defeq p_0^{-1}(S_0)$ is $\nu$-amenable. 
Then $S_0$ is $\nu _0$-amenable.
\end{claim}

\begin{pf}
Let $P:L^{\infty}(S)\to L^\infty (Y)$ be a unital, positive, $[[S]]$-equivariant linear map witnessing amenability of $S$.  
For each $h\in L^{\infty}(S_0)$ the function $h\circ\tilde{p}_0\in L^{\infty}(S)$ is $\Delta _{Y_0}\times F_1$-invariant, hence $P(h\circ \tilde{p}_0)$ is $\Delta _{Y_0}\times F_1$-invariant and, by ergodicity of $F_1$ on $(Y_1,\nu _1)$, it factors through $p_0$ to a function $P_0(h)\in L^{\infty}(Y_0)$.
Then the map $h\mapsto P_0(h)$ witnesses that $S_0$ is amenable.
\end{pf}

Since $Q\vee R = \tilde{p}_0^{-1}(Q_0\vee R_0)$, it follows that $Q\vee R$ is nowhere amenable: otherwise, since $Q\vee R$ is ergodic, $Q\vee R$ would be $\nu$-amenable, hence $Q_0\vee R_0$ would be $\nu _0$-amenable by \cref{claim:nowhere_amenable}, a contradiction.
The intersection of the relations $Q$ and $R$ is $\nu$-nowhere smooth since it contains $\Delta_{Y_0}\times F_1$, so the pair $(Q,R)$ is a witness to $G$ admitting ergodic anti-treeable configurations.
\end{proof}

A locally compact second countable group $G$ is called \textbf{treeable} if $G$ admits some free pmp action whose orbit equivalence relation is treeable.

\begin{cor}[{\cite[Corollary 0.6]{Hjorth:non-treeability}}]\label{Hjorth_nontreeable}
Let $G$ be a locally compact second countable group containing a closed subgroup isomorphic (as a topological group) to a product $G_0\times G_1$ where $G_0$ is nonamenable and $G_1$ is noncompact.
Then $G$ is not treeable.
\end{cor}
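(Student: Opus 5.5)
This follows by combining \cref{product_configuration} with \cref{configuration_implies_nontreeability}. Let $H\le G$ be the closed subgroup that is topologically isomorphic to $G_0\times G_1$. Since $G_0$ is nonamenable and $G_1$ is noncompact, \cref{product_configuration} shows that $H$ admits (ergodic) anti-treeable configurations. The isomorphism of topological groups transports the free pmp action and the locally countable complete section witnessing this from $G_0\times G_1$ to $H$. The associated measure class is intrinsic to the orbit relation, so it is transported as well.

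Now let $G\actson (X,\mu)$ be any free pmp action. By \cref{configuration_implies_nontreeability}, applied to the closed subgroup $H$, the orbit equivalence relation $E^X_G$ is $\mu$-nowhere treeable. In particular, $E^X_G$ is not treeable, even after discarding a null set. This holds for every free pmp action of $G$, so by definition $G$ is not treeable.

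No step is expected to be an obstacle: all the real work is already in \cref{product_configuration} and \cref{configuration_implies_nontreeability}. The latter in turn rests on \cref{nowhere_smooth_intersection} and on the subgroup permanence in \cref{lcsc_treeable_permanence}\labelcref{subgroup_treeable}, which is where freeness of the action is used.

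One point needs a brief check: in the definition of treeable group, "treeable" should be read as "treeable on a conull invariant set". Under that reading, nowhere treeability excludes it, since a $\mu$-conull $E^X_G$-invariant set on which the relation is treeable would itself be a positive-measure invariant set on which it is treeable.
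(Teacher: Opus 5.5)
Your proposal is correct and matches the paper's proof, which deduces the corollary immediately from \cref{configuration_implies_nontreeability} and \cref{product_configuration}. Your extra remarks about transporting the configuration along the topological isomorphism and about nowhere treeability ruling out treeability are routine details that the paper leaves implicit.
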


\begin{proof}
This is immediate from \cref{configuration_implies_nontreeability} and \cref{product_configuration}.
\end{proof}

\begin{remark}\label{rem:Hjorth_correction}
In the article \cite{Hjorth:non-treeability}, Hjorth deduces \cite[Corollary 0.6]{Hjorth:non-treeability} from \cite[Theorem 0.5]{Hjorth:non-treeability}, which states that if $G_0 \times G_1 \actson (X,\mu)$ is a pmp action of a product of locally compact groups such that $E^X_{G_0}$ is not $\mu$-amenable and the action of $G_1$ on almost all of its ergodic components is not essentially transitive, then $E^X_{G_0 \times G_1}$ is not treeable.
Unfortunately, as pointed out to us by Omer Tamuz, there is a counterexample to this statement, namely, the action of $G = G_0 \times G_1$ on $G / \Gamma$, where $G_0$, $G_1$, and $\Gamma$ are as in \cref{counterexample:nonfree}.

Examining the proof of \cite[Theorem 0.5]{Hjorth:non-treeability}, it appears that a mistake is made in the statement on page 401 that the set $A_1$ provides a Borel selector for a certain equivalence relation.
Nevertheless, we believe that the overall strategy of Hjorth's proof (which makes use of amenable boundary actions) can be adapted to provide a proof of \cite[Corollary 0.6]{Hjorth:non-treeability} and therefore of \cref{Hjorth_nontreeable}. 
This approach to proving \cref{Hjorth_nontreeable} is rather distinct from ours, although both approaches fundamentally rely on similar amenability and end selection arguments.
\end{remark}

\begin{cor}\label{cor:noncomp_amen_norm_nontreeable}
Let $G$ be a locally compact second countable group containing a closed amenable subgroup $H$ that is noncompact and whose normalizer $N_G(H)$ is nonamenable.
Then $G$ is not treeable.
\end{cor}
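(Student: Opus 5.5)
The plan is to reduce to \cref{normal_configuration}, applied to the group $N \defeq N_G(H)$, and then conclude with \cref{configuration_implies_nontreeability}.

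First, $N$ is a closed subgroup of $G$. Indeed, $N_G(H)=\bigcap_{h\in H}\set{g\in G: g h g^{-1}\in H \text{ and } g^{-1} h g\in H}$, and since $H$ is closed each of these sets is closed. Hence $N$ is itself a locally compact second countable group. It is nonamenable by hypothesis.

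Next, $H$ is a closed normal subgroup of $N$, and it is amenable and noncompact. So \cref{normal_configuration}, applied with $N$ in place of $G$, shows that $N$ admits (ergodic) anti-treeable configurations.

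Since $N$ is a closed subgroup of $G$, \cref{configuration_implies_nontreeability} now shows that for every free pmp action $G\actson(X,\mu)$ the orbit equivalence relation $E^X_G$ is $\mu$-nowhere treeable. In particular it is not treeable, and therefore $G$ is not treeable.

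There is no substantial obstacle here. The only points needing care are the closedness of $N_G(H)$ (checked above) and that the hypotheses of \cref{normal_configuration} are met. Those hypotheses require $H$ to be closed in $N$, which holds because $H$ is closed in $G$, and require $H$ to be normal in $N$, which holds by definition of the normalizer.
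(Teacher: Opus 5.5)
Your proposal is correct and follows the same route as the paper, which simply cites \cref{normal_configuration} (applied to $N_G(H)$) together with \cref{configuration_implies_nontreeability}. Your check that $N_G(H)$ is closed, so that $H$ is a closed normal subgroup of a nonamenable locally compact second countable group, fills in the details the paper leaves implicit.
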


\begin{proof}
This is immediate from \cref{configuration_implies_nontreeability} and \cref{normal_configuration}.
\end{proof}

In the upcoming article \cite{BSME} it is shown that the collection of all nonamenable, nonunimodular, treeable locally compact second countable groups forms a single measure equivalence class.
Our results in this section, combined with the structure theory of locally compact groups, now imply the following about groups in this class.

\begin{cor}\label{cor:compact_open}
Every locally compact second countable group that is nonamenable, nonunimodular, and treeable contains a compact open subgroup.
Thus, for every such group, its identity component is compact, and the quotient by this compact normal subgroup is a totally disconnected locally compact group.
\end{cor}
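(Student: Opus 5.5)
We argue by contradiction and combine \cref{cor:noncomp_amen_norm_nontreeable} with the structure theory of locally compact groups. Let $G$ be nonamenable, nonunimodular and treeable. The strategy is to show that the amenable radical $\mathrm{Rad}(G)$ of $G$, i.e., its largest closed amenable normal subgroup, is compact. We then pass to $G/\mathrm{Rad}(G)$, where the connected component is a semisimple Lie group with trivial center and no compact factors. Using nonunimodularity and treeability we force this connected part to be trivial, and the totally disconnected case is handled by van Dantzig's theorem.

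\textbf{Step 1.} Since $\mathrm{Rad}(G)$ is a closed amenable normal subgroup and $N_G(\mathrm{Rad}(G))=G$ is nonamenable, \cref{cor:noncomp_amen_norm_nontreeable} shows that $\mathrm{Rad}(G)$ is compact. More generally, every closed amenable subgroup $H$ with nonamenable normalizer is compact.

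\textbf{Step 2.} Put $\bar G\defeq G/\mathrm{Rad}(G)$ and let $\bar G^\circ$ be its identity component. By the Gleason--Montgomery--Zippin solution of Hilbert's fifth problem and the standard structure results, $\bar G^\circ$ is a connected semisimple Lie group with trivial center and no compact factors, or it is trivial. The group $\bar G/\bar G^\circ$ is totally disconnected. Moreover, the centralizer $C$ of $\bar G^\circ$ in $\bar G$ satisfies that $\bar G^\circ\times C$ is open (up to a finite-index, finite-intersection issue) in $\bar G$.

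Now suppose $\bar G^\circ\neq 1$. Then $\bar G^\circ$ contains a noncompact amenable subgroup, for instance the minimal parabolic subgroup, or a copy of $\R$ in a split torus. If $C$ is nonamenable, then $\bar G$ contains a closed product of a nonamenable group and a noncompact group. Pulling this back to $G$, modulo the compact kernel $\mathrm{Rad}(G)$, and applying \cref{Hjorth_nontreeable} (via the preimage in $G$, using that compact extensions preserve the relevant properties) gives a contradiction. If $C$ is amenable, then $C$ is a closed amenable normal subgroup of $\bar G$, so $C$ is trivial because $\mathrm{Rad}(\bar G)$ is trivial. Thus $\bar G$ is, up to an open subgroup, a subgroup of $\mathrm{Aut}(\mathfrak g)$ containing $\bar G^\circ$ with finite index. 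Such a group is unimodular, since semisimple groups are unimodular and finite-index extensions of unimodular groups are unimodular. Then $G$ would be unimodular as well, because $\mathrm{Rad}(G)$ is compact and the modular function is trivial on compact normal subgroups. This contradicts nonunimodularity. If $\bar G^\circ$ is semisimple with a rank-one simple factor, the same dichotomy applies.

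\textbf{Step 3.} Hence $\bar G^\circ=1$, so $\bar G$ is totally disconnected. By van Dantzig's theorem, $\bar G$ has a compact open subgroup $\bar K$. Its preimage $K$ in $G$ is an extension of the compact group $\bar K$ by the compact group $\mathrm{Rad}(G)$, hence compact and open. Since $G^\circ$ is contained in every open subgroup, $G^\circ\le K$ is compact, and $G/G^\circ$ is totally disconnected.

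The main obstacle is Step 2, namely the precise structural claim that $\bar G$ is virtually $\bar G^\circ\times C$. I would try first to use that $\bar G\to\mathrm{Aut}(\mathfrak g)$ has kernel $C$ and open image containing $\mathrm{Inn}(\mathfrak g)$ with finite index. I would also check carefully that nonunimodularity of $G$ descends to $\bar G$ and to the appropriate open subgroup.
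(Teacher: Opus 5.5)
Your proposal is correct and follows essentially the paper's route. You use \cref{cor:noncomp_amen_norm_nontreeable} to make $\mathrm{Rad}(G)$ compact, split $G/\mathrm{Rad}(G)$ virtually as $C\times\bar G^\circ$ (totally disconnected times center-free semisimple without compact factors), and use \cref{Hjorth_nontreeable} to exclude both factors being nontrivial. Unimodularity of semisimple groups then excludes $C=1$ with $\bar G^\circ\neq 1$, and van Dantzig finishes. The only difference is that you rebuild the splitting via $\bar G\to\mathrm{Aut}(\mathfrak g)$ instead of citing \cite[Theorem~5.1]{BFS20}.

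One step needs to be stated precisely. \cref{Hjorth_nontreeable} cannot be applied to the preimage of $C\times\bar G^\circ$ in $G$, since that preimage is only a compact extension of a product, not a product. There are two ways to fix this:
\begin{enumerate}[(a)]
\item Pass treeability down to $\bar G$ first: take a Borel transversal for the free action of the compact group $\mathrm{Rad}(G)$ and use \cref{lcsc_treeable_permanence}\labelcref{complete_sec_treeable}. The paper tacitly relies on this same point.
\item Apply \cref{cor:noncomp_amen_norm_nontreeable} directly to the preimage of your noncompact one-parameter subgroup of $\bar G^\circ$. Its normalizer contains the nonamenable preimage of $C$.
\end{enumerate}
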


\begin{proof}
By \cite[Theorem~5.1]{BFS20} (see also \cite[Theorem~3.3.3]{BM02}), given any locally compact group $G$, there is an open finite index normal subgroup $H$ of $G$ containing the amenable radical $A$ of $G$ (i.e., $A$ is the largest closed normal amenable subgroup of $G$), such that $H/A$ is topologically isomorphic to a product $M \times L$, where $M$ is a totally disconnected locally compact group with trivial amenable radical, and $L$ is a connected, center-free, semisimple real Lie group without compact factors.

Assuming now that $G$ is second countable, nonamenable, nonunimodular, and treeable, by \cref{cor:noncomp_amen_norm_nontreeable} the amenable normal subgroup $A$ must be compact.
Since $H$ is open and of finite index in $G$, it follows that $H/A \cong M\times L$ is also nonamenable, nonunimodular, and treeable.
If $M$ and $L$ were both nontrivial, then $M$ would be nonamenable, since its amenable radical is trivial, and $L$ would be noncompact, since it has no compact factors; but this contradicts \cref{Hjorth_nontreeable}.
Thus, either $M$ or $L$ is trivial.
The group $L$ is unimodular by \cite[Corollary~8.31]{Knapp02}, whereas $H/A$ is not, so $M$ cannot be trivial.
It follows that $L$ is trivial and $H/A$ is totally disconnected. 
Since $H/A$ is open in $G/A$, the group $G/A$ is also totally disconnected.
By van Dantzig's theorem \cite{vD36}, $G/A$ contains a compact open subgroup, and since $A$ is compact the preimage of such a subgroup under the quotient map $G\to G/A$ is a compact open subgroup of $G$.
\end{proof}

%% file: main.bbl
\newcommand{\etalchar}[1]{$^{#1}$}
\providecommand{\bysame}{\leavevmode\hbox to3em{\hrulefill}\thinspace}
\providecommand{\MR}{\relax\ifhmode\unskip\space\fi MR }
\providecommand{\MRhref}[2]{%
  \href{http://www.ams.org/mathscinet-getitem?mr=#1}{#2}
}
\providecommand{\href}[2]{#2}
\begin{thebibliography}{KEOSS24}

\bibitem[Ada88]{Adams:indec_treed_eq_rel_end-selection}
Scot Adams, \emph{Indecomposability of treed equivalence relations}, Israel J. Math. \textbf{64} (1988), no.~3, 362--380. \MR{995576}

\bibitem[Ada90]{Adams90}
\bysame, \emph{Trees and amenable equivalence relations}, Ergodic Theory Dynam. Systems \textbf{10} (1990), no.~1, 1--14. \MR{1053796}

\bibitem[BdlHV08]{Bekka-delaHarpe-Valette}
Bachir Bekka, Pierre de~la Harpe, and Alain Valette, \emph{Kazhdan's property ({T})}, New Mathematical Monographs, vol.~11, Cambridge University Press, Cambridge, 2008. \MR{2415834}

\bibitem[BFS20]{BFS20}
Uri Bader, Alex Furman, and Roman Sauer, \emph{Lattice envelopes}, Duke Math. J. \textbf{169} (2020), no.~2, 213--278. \MR{4057144}

\bibitem[BM02]{BM02}
Marc Burger and Nicolas Monod, \emph{Continuous bounded cohomology and applications to rigidity theory}, Geom. Funct. Anal. \textbf{12} (2002), no.~2, 219--280. \MR{1911660}

\bibitem[CFW81]{Connes-Feldman-Weiss}
A.~Connes, J.~Feldman, and B.~Weiss, \emph{An amenable equivalence relation is generated by a single transformation}, Ergodic Theory Dynam. Systems \textbf{1} (1981), no.~4, 431--450. \MR{662736}

\bibitem[CK18]{Chen-Kechris}
Ruiyuan Chen and Alexander~S. Kechris, \emph{Structurable equivalence relations}, Fund. Math. \textbf{242} (2018), no.~2, 109--185. \MR{3813610}

\bibitem[FHM78]{Feldman-Hahn-Moore:orbit_structure}
Jacob Feldman, Peter Hahn, and Calvin~C. Moore, \emph{Orbit structure and countable sections for actions of continuous groups}, Adv. in Math. \textbf{28} (1978), no.~3, 186--230. \MR{492061}

\bibitem[FM77]{Feldman-Moore}
Jacob Feldman and Calvin~C. Moore, \emph{Ergodic equivalence relations, cohomology, and von {N}eumann algebras. {I}}, Trans. Amer. Math. Soc. \textbf{234} (1977), no.~2, 289--324. \MR{578656}

\bibitem[GPT{\etalchar{+}}26]{BSME}
Damien Gaboriau, Antoine Poulin, Anush Tserunyan, Robin Tucker-Drob, and Konrad Wr\'obel, \emph{The measure equivalence classification of {B}aumslag--{S}olitar groups}, 2026, Forthcoming.

\bibitem[Hjo08]{Hjorth:non-treeability}
Greg Hjorth, \emph{Non-treeability for product group actions}, Israel J. Math. \textbf{163} (2008), 383--409. \MR{2391137}

\bibitem[JKL02]{JKL}
S.~Jackson, A.~S. Kechris, and A.~Louveau, \emph{Countable {B}orel equivalence relations}, J. Math. Log. \textbf{2} (2002), no.~1, 1--80. \MR{1900547}

\bibitem[Kai10]{Kaimanovich:Hopf_decomp}
Vadim~A. Kaimanovich, \emph{Hopf decomposition and horospheric limit sets}, Ann. Acad. Sci. Fenn. Math. \textbf{35} (2010), no.~2, 335--350. \MR{2731695}

\bibitem[Kec92]{kechris:countable-sections}
Alexander~S. Kechris, \emph{Countable sections for locally compact group actions}, Ergodic Theory Dynam. Systems \textbf{12} (1992), no.~2, 283--295. \MR{1176624}

\bibitem[Kec95]{bible}
\bysame, \emph{Classical descriptive set theory}, Graduate Texts in Mathematics, vol. 156, Springer-Verlag, New York, 1995. \MR{1321597}

\bibitem[KEOSS24]{EOSS:hyperfiniteness}
Srivatsav Kunnawalkam~Elayavalli, Koichi Oyakawa, Forte Shinko, and Pieter Spaas, \emph{Hyperfiniteness for group actions on trees}, Proc. Amer. Math. Soc. \textbf{152} (2024), no.~9, 3657--3664. \MR{4781963}

\bibitem[KL16]{Kerr-Li:book}
David Kerr and Hanfeng Li, \emph{Ergodic theory}, Springer Monographs in Mathematics, Springer, Cham, 2016, Independence and dichotomies. \MR{3616077}

\bibitem[KM04]{Kechris-Miller}
Alexander~S. Kechris and Benjamin~D. Miller, \emph{Topics in orbit equivalence}, Lecture Notes in Mathematics, vol. 1852, Springer-Verlag, Berlin, 2004. \MR{2095154}

\bibitem[Kna02]{Knapp02}
Anthony~W. Knapp, \emph{Lie groups beyond an introduction}, second ed., Progress in Mathematics, vol. 140, Birkh\"{a}user Boston, Inc., Boston, MA, 2002. \MR{1920389}

\bibitem[Lev95]{Levitt}
Gilbert Levitt, \emph{On the cost of generating an equivalence relation}, Ergodic Theory Dynam. Systems \textbf{15} (1995), no.~6, 1173--1181. \MR{1366313}

\bibitem[Mil08]{Miller:meas_with_cocycle_II}
Benjamin Miller, \emph{The existence of measures of a given cocycle. {II}. {P}robability measures}, Ergodic Theory Dynam. Systems \textbf{28} (2008), no.~5, 1615--1633. \MR{2449547}

\bibitem[Ram08]{Ramsay:virtual_groups_and_actions}
Arlan Ramsay, \emph{Virtual groups for group representations}, Group representations, ergodic theory, and mathematical physics: a tribute to {G}eorge {W}. {M}ackey, Contemp. Math., vol. 449, Amer. Math. Soc., Providence, RI, 2008, pp.~379--398. \MR{2391812}

\bibitem[TTD]{TTD:paddleball}
Anush Tserunyan and Robin Tucker-Drob, \emph{Paddleball with {C}arri{\`e}re and {G}hys}, Forthcoming.

\bibitem[TTD25]{T-TD:mcp_Adams}
\bysame, \emph{The {R}adon--{N}ikodym topography of acyclic measured graphs}, 2025, Preprint.

\bibitem[vD36]{vD36}
D.~van Dantzig, \emph{Zur topologischen {Algebra}. {III}. {Brouwersche} und {Cantorsche} {Gruppen}}, Compositio Math. \textbf{3} (1936), 408--426. \MR{1556954}

\end{thebibliography}
